\newtheorem{thm}{Theorem}[section]
\newtheorem{prop}[thm]{Proposition}
\newtheorem{coll}[thm]{Corollary}
\newtheorem{lem}[thm]{Lemma}
\newtheorem{ex}[thm]{Example}
\newtheorem{defn}[thm]{Definition}
\newtheorem{rmk}[thm]{Remark}
\newcommand{\sE}{{\mathcal E}}
\newcommand{\sM}{{\mathcal M}}
\newcommand{\sI}{{\mathcal I}}
\newcommand{\sL}{{\mathcal L}}
\newcommand{\sO}{{\mathcal O}}
\newcommand{\wu}{\widetilde{\mu}}
\newcommand{\sk}{\mathbf{k}}
\newcommand{\dd}{\widetilde{d}}
\title
{Slope inequalities and a Miyaoka-Yau type inequality}
\thanks{Gu is supported by the NSFC (No.11801391) and NSF of Jiangsu Province (No.BK20180832); Sun and Zhou are supported by the NSFC (No.11831013 and No.11501154);
 Zhou is also supported by NSF of
Zhejiang Province No. LQ16A010005}
\author{Yi Gu, Xiaotao Sun and Mingshuo Zhou}
\begin{document}


\begin{abstract}
For a minimal smooth projective surface $S$ of general type over a field of characteristic $p>0$, we prove that
$$K^2_S\le 32\chi(\sO_S).$$
Moreover, if $18\chi(\sO_S)<K^2_S\le 32\chi(\sO_S)$, Albanese morphism of $S$ must induces a genus two fiberation.
A classification of surfaces with $K^2_S=32\chi(\sO_S)$ is also given. The inequality also implies $\chi(\sO_S)>0$, which answers completely a question of Shepherd-Barron.
\end{abstract}

\maketitle

\tableofcontents
\section{Introduction}
Let $S$ be a smooth projective surface of general type over an algebraically closed field $\sk$.  When $\sk=\mathbb{C}$, we have the celebrated Miyaoka-Yau inequality (see \cite{Miyaoka, Yau}):
\begin{equation}\label{ieq: M-Y}
c^2_1(S)\le 3c_2(S).
\end{equation}
By the following Noether's formula (see \cite[Chap.~I, (5.5)]{BHPV}):
\begin{equation}\label{formula: Noether}
12\chi(\sO_S)=c_1^2(S)+c_2(S),
\end{equation}
the Miyaoka-Yau inequality (\ref{ieq: M-Y}) can also be  formulated as:
\begin{equation}\label{ieq: M-Y'}
c_1^2(S)\le 9\chi(\sO_S).
\end{equation}
The Miyaoka-Yau inequality (\ref{ieq: M-Y}) and its equivalent form (\ref{ieq: M-Y'}) play  a very important role in the study of complex algebraic surfaces (see {\it e.g.}, \cite{Beauville, Persson}).

Let us then turn to the positive characteristic cases. Assume that $\mathrm{char}.(\sk)=p>0$ now. Noether's formula (\ref{formula: Noether}) remains true (see \cite[\S~5]{Badescu}), but one should not expect the Miyaoka-Yau inequality (\ref{ieq: M-Y}). In fact, Raynaud's examples (see \cite{R} or \S~\ref{S: Raynaud's example} below) show that there exist minimal smooth projective surfaces $S$ of general type with $c_2(S)<0$, a contradiction to (\ref{ieq: M-Y}) as $c^2_1(S)=K_S^2>0$ when $S$ is minimal. As $c_2(S)$ can be negative, a natural question is that whether $\chi(\sO_S)$ can be negative or not.  In \cite{SB}, Shepherd-Barron shows that $\chi(\sO_S)> 0$ unless there is a suitable fibration $f: S\to C$ with singular general fibres of arithmetic genus $2\le g \le 4$ and $p\le 7$ (see \cite[Theorem~8]{SB} or Theorem~\ref{Thm: SB} below). However, the question if there exists such surface $S$ with $\chi(\sO_S)<0$  remains unsolved (see the Remark at page 268 of \cite{SB}). Shepherd-Barron also suggested that the most obvious place to look for such examples would be in the case where $(p,g)=(2,2)$. Later, it is proved by the first author in \cite{Gu} that $\chi(\sO_S)>0$ when $p\ge 3$. Our main result in this article is to prove a Miyaoka-Yau type inequality: $$K^2_S\le 32\chi(\sO_S)$$ for all smooth projective surfaces $S$ of general type, which in particularly implies $\chi(\sO_S)>0$ for any $p$ and answers Shepherd-Barron's question completely.

We observe that the above Miyaoka-Yau type inequality follows in fact a series of slope inequalities. Let $f:S\to C$ be a relatively minimal surface fibration  of genus $g\ge 2$ over $\sk$. The slope inequalities concern about the numerical relations between $K^2_{S/C}$ and $\chi_f:=\deg (f_*\omega_{S/C})$. When $\sk=\mathbb{C}$, we have the following so-called Xiao's slope inequality:
\begin{equation}\label{IEQ: Xiao}
K^2_{S/C}\ge \dfrac{4g-4}{g} \chi_f.
\end{equation}
This inequality is first  proved for any minimal fibration first by G. Xiao (see \cite{XG}) and for semi-stable fibrations independently by Cornalba and Harris (see \cite{C.H}). Some other proofs have also been given by others (see \cite{Moriwaki}).  In this paper, we first prove a partial generalization of Xiao's slope inequality in positive characteristics.

\begin{thm} \label{Thm: main-slope}
Let $f:S\to C$ be a relatively minimal fibration of genus $g\ge 2$ over an algebraically closed field $\sk$ of positive characteristic, assume any one of the following assumptions is true:
\begin{enumerate}[(a.)]
\item the generic fibre of $f$ is hyperelliptic;

\item the generic fibre of $f$ is smooth;

\item the genus $b:=g(C)\le 1$,
\end{enumerate} then Xiao's slope inequality
 $K^2_{S/C}\ge \dfrac{4g-4}{g}\chi_f$ holds.
\end{thm}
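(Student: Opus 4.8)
The plan is to adapt Xiao's Harder--Narasimhan/moving-part argument \cite{XG}, isolating the two steps at which positive characteristic interferes and using each of the hypotheses (a)--(c) to get past them. One reduction is free at the outset: a relatively minimal fibration of genus $g\ge 2$ has $K_{S/C}$ nef, hence $K^2_{S/C}\ge 0$, so the inequality is trivial unless $\chi_f>0$, which we henceforth assume.

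The skeleton of the proof is as follows. Put $E:=f_*\omega_{S/C}$, a rank-$g$ bundle on $C$ of degree $\chi_f$, and take its Harder--Narasimhan filtration $0=E_0\subsetneq E_1\subsetneq\cdots\subsetneq E_N=E$, writing $r_i$ for the rank of $E_i$ and $\mu_i$ for the slope of $E_i/E_{i-1}$, so that $\mu_1>\mu_2>\cdots>\mu_N$; set $\mu_{N+1}:=0$ and $d_0:=0$. For each $i$ the evaluation $f^*E_i\to\omega_{S/C}$ has image $\omega_{S/C}\otimes\sI_i$, and removing the (vertical) divisorial part of $V(\sI_i)$ gives a decomposition $K_{S/C}=M_i+Z_i$ with $M_i$ nef, $Z_i\ge 0$, and $Z_1\ge Z_2\ge\cdots\ge Z_N$, so the $M_i$ increase. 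Let $d_i:=M_i\cdot F$ for a general fibre $F$; then $d_N=2g-2$ and $r_N=g$. A short computation with nef divisors gives $K^2_{S/C}\ge M_N^2$, and then the chain $M_N^2\ge M_{N-1}\cdot M_N\ge\cdots$, the identity $\chi_f=\sum_{i=1}^N r_i(\mu_i-\mu_{i+1})$ (Abel summation), the Hodge index theorem, and Xiao's case analysis on the values of $d_i$ combine to give $K^2_{S/C}\ge\frac{4g-4}{g}\chi_f$. Every step in this part is purely intersection-theoretic and carries over to characteristic $p$ verbatim, \emph{provided} we can supply two inputs: (I) the Clifford bound $d_i\ge 2r_i-2$ for all $i$, and (II) the semipositivity $\mu_N=\mu_{\min}(E)\ge 0$.

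So everything comes down to (I) and (II) in characteristic $p$. Input (I) holds as soon as the general fibre $F$ is a \emph{smooth} curve: the complete system $|M_i|_F|$ is special (its complement in $|\omega_F|$ is the effective divisor $Z_i|_F$) and has $h^0\ge r_i$, so Clifford's theorem --- valid in every characteristic --- yields $d_i\ge 2r_i-2$. Under (b) this smoothness is the hypothesis, and under (a) a hyperelliptic fibre is smooth by definition; in the hyperelliptic case one gets more, since the relative $g^1_2$ produces an explicit relative double-cover model $S\dashrightarrow\mathbb{P}_C(V)$ over $C$ on which the $M_i$, $Z_i$ and $d_i$ can be computed directly, which in particular disposes of the hyperelliptic-extremal branch of Xiao's case analysis. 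Input (II) is meant to replace Fujita's semipositivity theorem, which is unavailable over an imperfect base; however, replacing $E$ by its sub-bundle $E_j$ for the largest $j$ with $\mu_j\ge 0$ only raises the degree, so (II) can be arranged once one checks that the truncation does not destroy the input $d_N=2g-2$ that closes the estimate. Case (c), $b=g(C)\le 1$, has to confront both points directly, the delicate one being that the general fibre may a priori be non-smooth; here I would invoke the classification of genus-$g$ fibrations ($g\ge 2$) with non-smooth general fibre --- where the genus drop forces $g\le 4$ and $p\le 7$, cf.\ Theorem~\ref{Thm: SB} --- to exclude such fibrations over a base of genus $\le 1$, or to verify the slope inequality for them by hand. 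I expect this step --- understanding non-smooth fibrations and compensating for the failure of semipositivity in characteristic $p$ --- to be the real obstacle, while the machinery borrowed from \cite{XG} goes through formally.
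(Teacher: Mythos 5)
Your proposal has the right skeleton (Harder--Narasimhan filtration, Xiao's divisors $Z_i$, the Clifford bound $d_i\ge 2r_i-2$ as the key input), but it misses the central characteristic-$p$ obstacle and misreads what cases (a) and (c) are for. The step you wave through as ``purely intersection-theoretic and carries over to characteristic $p$ verbatim'' --- the nefness of the divisors $N_i=K_{S/C}-Z_i-\mu_iF$ --- is exactly what fails in positive characteristic: it rests on Miyaoka's lemma that $\sO_{\mathbb{P}(E_i)}(1)-\mu_{\min}(E_i)\cdot\Gamma$ is nef, which is false in general over a base of genus $\ge 2$ in characteristic $p$. The paper's proof (following \cite{SSZ}) replaces $E=f_*\omega_{S/C}$ by $F_C^{k*}E$ for $k\gg 0$, invokes Langer's theorem that the Harder--Narasimhan quotients of $F_C^{k*}E$ are then strongly semistable, recovers nefness in that setting (Lemma~\ref{Lem3.3}), and runs Xiao's machine for $p^kK_{S/C}$ with normalised slopes $\mu_i/p^k$. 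Without this Frobenius pullback your $M_i$ are simply not known to be nef and Lemma~\ref{Lem3.5} does not apply. Relatedly, your input (II) is not how the paper closes the estimate: no semipositivity of $\mu_{\min}$ is used; instead $K^2_{S/C}\ge 4\chi_f-2(\wu_1+\wu_n)$ is combined with $K^2_{S/C}\ge(2g-2)(\wu_1+\wu_n)$, and the convex combination with weights $\tfrac{g-1}{g}$ and $\tfrac{1}{g}$ eliminates $\wu_1+\wu_n$ regardless of its sign.

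Two of your factual premises are also false and would sink the argument even if the nefness issue were repaired. First, ``a hyperelliptic fibre is smooth by definition'' contradicts the paper's convention (an integral curve admitting a flat double cover of $\mathbb{P}^1_\sk$) and defeats the purpose of case (a): Raynaud's examples in \S\ref{S: Raynaud's example} are hyperelliptic fibrations with \emph{singular} generic fibre, and for them $K^2_{S/C}$ and $\chi_f$ are both negative --- which also refutes your opening reduction that $K_{S/C}$ is nef and the inequality is trivial unless $\chi_f>0$. The hyperelliptic case is instead handled by noting that $2=\deg(\phi_n')$ divides $\deg(\phi_i')$ and $\deg(\phi_i(F))\ge r_i-1$, which yields the Clifford-type bound with no smoothness whatsoever. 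Second, your plan for case (c) --- excluding non-smooth fibrations over bases of genus $\le 1$ via Theorem~\ref{Thm: SB} --- misreads that theorem: it describes surfaces of general type with $c_2<0$, not all fibrations with non-smooth generic fibre, and such fibrations certainly exist over $\mathbb{P}^1$ and elliptic curves. The paper's actual point in case (c) is that over a base of genus $\le 1$ semistable bundles are already strongly semistable, so one may take $k=0$; then $f'=f$, the generic fibre is normal because $S$ is regular, and Clifford's theorem for special line bundles on an integral Gorenstein curve gives $d_i\ge 2r_i-2$.
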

Note that in positive characteristics, the generic fibre of $f$ can be singular.  It should also be pointed out that in case the generic fibre of $f$ is singular, we do not have the semi-positivity of $K_{S/C}$ and $\chi_f$. As a result, both $K_{S/C}^2$ and $\chi_f$ can be negative (cf. \S~\ref{S: Raynaud's example} for an example). Under the assumption that $f$ has a smooth generic fibre, Xiao's slope inequality (\ref{IEQ: Xiao}) has already been proven in a previous paper \cite{SSZ} of H. Sun and the last two authors.

We then point out as one of the positive characteristic pathology, Xiao's slope inequality (\ref{IEQ: Xiao}) can fail in positive characteristics in general.
\begin{prop}[see \S~\ref{S: counterexample to Xiao}]\label{prop: co-Xiao}
For infinitely many integers $g\ge 3$, there exists relatively minimal surface fibration $f: S\to C$ of fibre genus $g$ over an algebraically closed field $\sk$ of positive characteristic such that $K_{S/C}^2<\dfrac{4g-4}{g}\chi_f$.
\end{prop}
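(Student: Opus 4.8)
The plan is to construct explicit counterexamples using Raynaud-type surfaces, which are the natural source of "pathological" fibrations in positive characteristic. The key idea is that Xiao's slope inequality, in its classical proof, relies crucially on the semi-positivity of $f_*\omega_{S/C}$ (Fujita's theorem) and on properties of the Harder--Narasimhan filtration that can fail when the generic fibre is singular, or more precisely when the relative Frobenius interferes with the multiplication maps $\Sym^k f_*\omega_{S/C}\to f_*\omega_{S/C}^{\otimes k}$. So I would look for a fibration $f:S\to C$ whose generic fibre is a singular (e.g. quasi-elliptic-like or Tango-type) curve of arithmetic genus $g$, engineered so that $\chi_f=\deg f_*\omega_{S/C}$ is large relative to $K^2_{S/C}$.

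Concretely, I would start from a Tango--Raynaud curve or a $\mathbb{Z}/p$-cover construction: fix a curve $C$ admitting a line bundle $L$ with a suitable Frobenius-related property (a Tango structure), and form a purely inseparable or Artin--Schreier-type cover $S\to C'\to C$ producing a surface fibred over $C$ with controllable invariants. For each admissible $p$ and each admissible genus $g$ in an arithmetic progression (determined by the numerology of the Tango structure — typically $g$ of the form related to $p$ and the degree of $L$), one computes $K^2_{S/C}$ and $\chi_f$ directly from the cover data: $\chi_f$ via the pushforward of the canonical bundle along the inseparable map (which tends to be "too large" because inseparable pushforward does not drop degree the way a separable one would), and $K^2_{S/C}$ via adjunction on the total space. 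Relative minimality is checked by verifying there are no $(-1)$-curves in fibres, which follows from the construction being an honest fibration with no rational components contracted.

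The main obstacle is making the inequality $K^2_{S/C}<\frac{4g-4}{g}\chi_f$ come out with the right sign for infinitely many $g$, rather than just finitely many sporadic cases: one needs a family of Tango/Raynaud data whose genus grows while the ratio $K^2_{S/C}/\chi_f$ stays bounded below $\frac{4g-4}{g}$ (equivalently bounded below $4$). This requires choosing the base curve $C$ and the degree parameters so that both invariants scale linearly in a parameter but with the "wrong" leading coefficients; the bookkeeping of the discrepancies introduced by resolving the singularities of the cover (or by the fact that the generic fibre is singular, contributing to $\chi_f$ but in a way that does not force $K^2_{S/C}$ up) is where the delicate estimate lives. I would expect to reduce everything to a single clean inequality between the degree of $L$, the genus $b$ of $C$, and $p$, and then exhibit an infinite arithmetic family of solutions. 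A secondary technical point is ensuring $f$ is genuinely relatively minimal and of general type on the total space (so that the statement is non-vacuous), which I would handle by taking $b=g(C)$ and the cover degree large enough that $K_S$ is big.
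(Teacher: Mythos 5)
Your instinct to look at Raynaud-type data is correct---the paper's counterexample is built from Raynaud's configuration $(C,P=\mathbb{P}(E),\Sigma)$ by taking a cyclic cover of the ruled surface $P$ of degree $p+1$ branched along $\Sigma\in|\sO_P(p+1)\otimes\pi^*\omega_{C/\sk}^{-1}|$ (after an \'etale base change so that $p+1\mid 2b-2$; since $\Sigma$ is smooth, no resolution of singularities is needed). But there is a conceptual gap in how you frame the target inequality, and it would steer your computation in the wrong direction. You aim to make ``the ratio $K^2_{S/C}/\chi_f$ stay bounded below $\frac{4g-4}{g}$,'' which is equivalent to the desired inequality only when $\chi_f>0$. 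In the actual examples both invariants are \emph{negative}: the computation gives $K^2_{S/C}=-(p^2-3p+2)(2b-2)<0$ and $\chi_f=-\frac{(p-1)^2}{6}(2b-2)<0$ for a fibration of fibre genus $g=\frac{p^2-p}{2}$, so the ratio $K^2_{S/C}/\chi_f=\frac{6(p-2)}{p-1}$ is in fact \emph{greater} than $\frac{4g-4}{g}$; the inequality $K^2_{S/C}<\frac{4g-4}{g}\chi_f$ is violated precisely because multiplying by the negative number $\chi_f$ reverses the sense. Hunting for an example with $\chi_f>0$ and slope below $\frac{4g-4}{g}$ is a different, and quite possibly unattainable, problem---note the paper proves Xiao's inequality whenever the generic fibre is smooth or hyperelliptic or $g(C)\le 1$, so any counterexample lives in exactly the pathological regime where semi-positivity of $f_*\omega_{S/C}$ fails and $\chi_f$ goes negative.

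A second gap is the source of ``infinitely many $g$'': you look for an arithmetic progression of genera coming from degree parameters of a Tango structure for fixed $p$, whereas in the paper the genus is pinned to the characteristic, $g=\frac{p^2-p}{2}$, and infinitude comes from letting $p$ range over the odd primes. Without committing to the explicit cover and extracting the closed-form invariants above, the ``single clean inequality'' you hope to reduce to never materializes; here the whole proof \emph{is} that computation, and its qualitative outcome (two negative numbers) is the opposite of what your bookkeeping is set up to produce.
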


For general fibrations or fibrations of small genus, we also give some different slope inequalities.
\begin{thm}\label{Thm: other slope}
Let $f: S\to C$ be a relatively minimal fibration of genus $g\ge 3$ over an algebraically closed field $\sk$ and $b:=g(C)$, then
\begin{enumerate}[(a.)]
\item  if $K_S$ is nef, then $K_S^2\ge \dfrac{2g-2}{g}\deg(f_* \omega_{S/\sk})$;

\item if $g=3$ and $f$ is non-hyperelliptic, then $K^2_{S/C}\ge 3\chi_f$;

\item if $g=4$ and $K_S$ is nef, then $7K_{S/C}^2\ge 15\chi_f-48(b-1)$;

\item if $g\ge 5$ and $K_S$ is nef, then $$K_{S/C}^2\ge \dfrac{2(g-1)(g-2)}{g^2-3g+1}\chi_f-\dfrac{4(g-1)(g^2-4g+2)}{g^2-3g+1}(b-1).$$
\end{enumerate}
\end{thm}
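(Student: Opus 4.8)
The plan is to run \emph{Xiao's method} \cite{XG} in all four cases. I would analyse the Harder--Narasimhan filtration of the rank-$g$ bundle $\sE:=f_*\omega_{S/C}$ on $C$, writing it $0=\sE_0\subsetneq\sE_1\subsetneq\cdots\subsetneq\sE_n=\sE$ with ranks $r_i$ and slopes $\mu_i:=\mu(\sE_i/\sE_{i-1})$, $\mu_1>\cdots>\mu_n$, and setting $\mu_{n+1}:=0$. Two relations will drive the argument. First, Abel summation gives the combinatorial identity $\chi_f=\deg\sE=\sum_{i=1}^{n}r_i(\mu_i-\mu_{i+1})$. Second, after blowing up $S$ to resolve the base loci of the sub-linear systems attached to the $\sE_i$, one obtains a relatively minimal $\tilde f\colon\widetilde S\to C$ carrying nef divisors $N_1\le N_2\le\cdots\le N_n\le K_{\widetilde S/C}$, where $N_i$ is the moving part of the image of the natural map $\tilde f^*\sE_i\to\omega_{\widetilde S/C}$; since $K_{S/C}^2\ge K_{\widetilde S/C}^2$, the Hodge index theorem on $\widetilde S$ applied to the chain $(N_i)$ yields a lower bound of the form $K_{S/C}^2\ge\sum_{i=1}^{n}(\mu_i-\mu_{i+1})\,\Phi_i$, where each $\Phi_i$ is an explicit non-negative expression linear in the fibre degrees $d_i:=N_i\cdot F$ and in $2g-2=K_{S/C}\cdot F$.

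With these two relations in place, the content of each part is a \emph{Clifford/Castelnuovo-type lower bound for the $d_i$}, applied to the generic fibre $F$ of $f$ over the function field of $C$ --- which in positive characteristic may be a \emph{singular} Gorenstein curve of arithmetic genus $g$. For $r_i<g$ the line bundle $N_i|_F$ is a subsheaf of $\omega_F$, hence special, so the Gorenstein version of Clifford's theorem gives $d_i\ge 2(r_i-1)$, while $d_n=2g-2$; when the generic fibre is non-hyperelliptic the relative canonical map is generically injective, which sharpens this in the relevant range and, in small genus, essentially determines the geometry ($F$ a plane quartic for $g=3$, a $(2,3)$-complete intersection in $\mathbb P^3$ for $g=4$). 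Feeding these estimates into the two relations above reduces each statement to a finite linear-programming problem over the admissible sequences $(r_i,\mu_i)$, whose optimum is attained at short filtrations; this is the step that produces the coefficients $3$, $\tfrac{15}{7}$ and $\tfrac{2(g-1)(g-2)}{g^2-3g+1}$. Finally I would translate $K_{S/C}$ and $\chi_f$ into $K_S^2$ and $\deg f_*\omega_{S/\sk}$ via $K_S=K_{S/C}+f^*K_C$, the identity $K_S^2=K_{S/C}^2+8(g-1)(b-1)$ and $f_*\omega_{S/\sk}\cong f_*\omega_{S/C}\otimes\omega_C$; this produces the $(b-1)$-terms in (c) and (d), and rewrites (a) as $K_{S/C}^2\ge\tfrac{2g-2}{g}\chi_f-4(g-1)(b-1)$. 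For (c) and (d) I would first reduce to the case of a non-hyperelliptic, singular generic fibre, since the smooth and the hyperelliptic cases are already covered, with a stronger slope, by Theorem~\ref{Thm: main-slope}.

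The hypothesis that $K_S$ is nef is used twice: it makes $\omega_S\cdot N\ge 0$ for every nef $N$ occurring above, so the vertical correction terms that are discarded when descending from $\widetilde S$ to $S$ and when comparing $N_n$ with $K_{\widetilde S/C}$ carry a favourable sign; and it substitutes for the Fujita-type semipositivity of $f_*\omega_{S/C}$, which in positive characteristic may fail precisely when the generic fibre is singular. This is exactly where I expect \textbf{the main difficulty}. Over $\mathbb{C}$ one has the semipositivity of $K_{S/C}$ and $\chi_f$ for free; here neither need be nonnegative, Clifford's bound for a singular generic fibre is genuinely weaker than in the smooth case --- this weakening is the mechanism behind Proposition~\ref{prop: co-Xiao} --- and the relative canonical map may be inseparable or may contract $F$ onto a non-reduced curve, so that the inputs $d_i\ge 2(r_i-1)$ and $d_n=2g-2$ can degrade. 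Quantifying this loss and checking that ``$K_S$ nef'' absorbs it in parts (a), (c), (d), while in (b) the rigidity of the relative plane-quartic geometry already suffices with no positivity assumption, is the real work; the linear-programming step and the Riemann--Roch bookkeeping are then routine.
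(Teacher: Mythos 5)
Your framework (Harder--Narasimhan filtration, Xiao's chain of nef divisors, a final combinatorial optimisation) is the right one, and your treatment of (b) agrees with the paper, which simply observes that Konno's genus-$3$ argument is characteristic-free. But for (a), (c), (d) there is a genuine gap at exactly the point you flag as ``the real work''. Your quantitative input is the Gorenstein--Clifford bound $d_i\ge 2(r_i-1)$, possibly ``degraded'' for a singular generic fibre, with the hope that nefness of $K_S$ absorbs the loss through vertical correction terms. The paper shows (Remark~\ref{rmk: counterexample to xiao} and \S~\ref{S: counterexample to Xiao}) that this Clifford-type bound genuinely fails for the fibrations at issue --- that failure is precisely why Xiao's inequality fails --- and its proof of Proposition~\ref{Prop: slope inequality small genus} does not use Clifford at all. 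Instead it uses only the universally valid estimate $\dd_i\ge r_i-1$ (the degree of a non-degenerate curve in $\mathbb{P}^{r_i-1}$), and compensates by extracting from the nefness of $K_S$ the inequality $K_S^2\ge (2g-2)\widetilde{\mu}_1+4(g-1)(b-1)$, i.e.\ an upper bound on the top Harder--Narasimhan slope $\widetilde{\mu}_1$ in terms of $K_S^2$. Part (a) is immediate from this together with $g\widetilde{\mu}_1\ge\chi_f$; parts (c) and (d) follow by rewriting $K_{S/C}^2-4(g-1)\widetilde{\mu}_1$ and $\chi_f-g\widetilde{\mu}_1$ in the variables $v_i=\widetilde{\mu}_1-\widetilde{\mu}_i$ and proving the purely combinatorial Lemma~\ref{lem: g=4,5}; that lemma is where the coefficients $15/7$ and $2(g-1)(g-2)/(g^2-3g+1)$ --- strictly weaker than Xiao's $\tfrac{4g-4}{g}$ --- come from. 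This trade, a weaker but always-true degree bound plus control of $\widetilde{\mu}_1$ via $K_S$ nef, is the missing idea; without it your linear programme is fed with inputs that may be false and does not close.

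A second, smaller omission: you run Xiao's method on the filtration of $f_*\omega_{S/C}$ itself and assert nefness of the resulting $N_i$. In positive characteristic the Miyaoka-type statement that $\sO_{\mathbb{P}(E)}(1)-\mu_{\min}(E)\cdot\Gamma$ is nef fails for merely semistable subquotients; the paper must first pull back by a high Frobenius power $F_C^{k}$ so that, by Langer's theorem (Theorem~\ref{Thm: Langer}), all Harder--Narasimhan subquotients become strongly semistable, and then work with $N_i=p^kK_{S/C}-Z_i-\mu_iF$ and the normalised slopes $\widetilde{\mu}_i=\mu_i/p^k$. Your proposal should make this step explicit, since it is precisely the positive-characteristic repair of the one lemma in Xiao's argument that breaks.
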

Over $\mathbb{C}$, the slope inequality for non-hyperelliptic genus-$3$ fibration in (b.) is proved by Horikawa (see \cite{Horikawa}) and Konno (see \cite{K1}). For non-hyperelliptic fibrations of $g=4,5$, Konno \cite{K} and Chen \cite{Chen} have also given some other slope inequalities.  In positive characteristics, Yuan and Zhang have given a slope inequality for general genus $g$ in \cite[Lem.~3.2]{Y.Z} involving terms of $b=g(C)$. Our slope inequality here in (c.) (d.) are different from theirs.

Now let us return to the Miyaoka-Yau type inequality. For a minimal surface $S$ of general type, when $c_2(S)\ge 0$, Noether's formula (\ref{formula: Noether}) already implies that $K^2_S\le 12\chi(\sO_S)$. So we assume $c_2(S)<0$, Shepherd-Barron has shown (see \cite[Thm.~6]{SB}) that the Albanese map of $S$ induces a fibration $f: S\to C$ of fibre arithmetic genus $g\ge 2$ and $b:=g(C)\ge 2$. By abuse of language, we call such $f:S\to C$ as the Albanese fibration of $S$.
As an application of Theorem~\ref{Thm: main-slope} and Theorem~\ref{Thm: other slope}, we have

\begin{thm}\label{Cor: slope 1}
Let $f: S\to C$ be the Albanese fibration of $S$. Then
$$K_S^2\le \left\{\begin{array}{cc}
\dfrac{(12g+8)(g-1)}{g^2-g-1}\chi(\sO_S), &\text{if generic fibre is hyperelliptic };\\
\\
18\chi(\sO_S), & \text{if \, $g=3$}; \\
\\
\dfrac{840}{47}\chi(\sO_S), & \text{if \, $g=4$}; \\
\\
\dfrac{12(g-1)(3g^2-4g-4)}{g(3g^2-12g+15)}\chi(\sO_S), & \text{if $g\ge 5$}.
\end{array}\right.$$
\end{thm}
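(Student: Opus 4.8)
The plan is to combine the slope inequalities of Theorems~\ref{Thm: main-slope} and~\ref{Thm: other slope} with Noether's formula and two standard facts about the Albanese fibration $f\colon S\to C$, namely that $b=g(C)\ge 2$ and that the relative invariants are related to the absolute ones by $K^2_{S/C}=K_S^2-8(g-1)(b-1)$ and $\chi_f=\deg(f_*\omega_{S/C})=\chi(\sO_S)-(g-1)(b-1)$ (the latter from the Leray spectral sequence together with $R^1f_*\sO_S$ having degree $-\chi_f$, and $\chi(\sO_C)=1-b$). Since we are in the case $c_2(S)<0$, i.e. $K_S^2>12\chi(\sO_S)>18\chi(\sO_S)$ is the regime of interest, we may and do assume $K_S$ is nef (the fibration being relatively minimal, $S$ is the minimal model once $c_2<0$, so $K_S$ is nef); this licenses the use of parts (a.), (c.), (d.) of Theorem~\ref{Thm: other slope} which carry the nef hypothesis. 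I would treat the four cases separately.

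\emph{Hyperelliptic case.} Here Theorem~\ref{Thm: main-slope}(a.) gives $K^2_{S/C}\ge\frac{4g-4}{g}\chi_f$. Substituting the two relations above yields
\[
K_S^2-8(g-1)(b-1)\ \ge\ \frac{4g-4}{g}\bigl(\chi(\sO_S)-(g-1)(b-1)\bigr),
\]
which rearranges to $K_S^2\ge\frac{4g-4}{g}\chi(\sO_S)+\frac{(g-1)(4g-4)}{g}(b-1)$; but this is a \emph{lower} bound and goes the wrong way, so instead I would feed the slope inequality into the chain obtained by eliminating $(b-1)$ between it and the trivial bound coming from $c_2(S)<0$ (equivalently $K_S^2<12\chi(\sO_S)$ is false, so one uses $\chi(\sO_S)=\frac{1}{12}(K_S^2+c_2(S))$ and $c_2(S)\ge $ some multiple of $(b-1)$ via the second Noether-type/Arakelov computation $c_2(S)=c_2(\text{fibre contribution})+4(g-1)(b-1)+\deg(\text{singular fibres})$). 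The cleanest route is: solve the slope inequality for $(b-1)$, namely $(b-1)\le\frac{1}{(g-1)(4g-4)/g-8(g-1)}\bigl(\frac{4g-4}{g}\chi(\sO_S)-K_S^2\bigr)$ (note the denominator $\frac{(g-1)(4g-4)}{g}-8(g-1)=-\frac{4(g-1)(g+1)}{g}<0$, so this \emph{bounds $(b-1)$ from below} in terms of $K_S^2-\frac{4g-4}{g}\chi$), and then plug this into $c_2(S)=12\chi(\sO_S)-K_S^2$ together with the lower bound $c_2(S)\ge 4(g-1)(b-1)$ valid because the non-fibre part of $c_2$ and the singular-fibre contributions are $\ge 0$ for a relatively minimal fibration. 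Combining $12\chi(\sO_S)-K_S^2=c_2(S)\ge 4(g-1)(b-1)$ with the slope inequality rewritten as $K_S^2\ge\frac{4g-4}{g}\chi_f+8(g-1)(b-1)=\frac{4g-4}{g}\chi(\sO_S)+\frac{(4g-4)(g-1)}{g}(b-1)-... $ — after eliminating $(b-1)$ the bound $K_S^2\le\frac{(12g+8)(g-1)}{g^2-g-1}\chi(\sO_S)$ drops out as a linear-programming optimum.

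\emph{The $g=3,4,\ge 5$ cases} are entirely parallel, using respectively Theorem~\ref{Thm: other slope}(b.) for $g=3$ (where $f$ must be non-hyperelliptic, else the hyperelliptic bound with $g=3$ gives exactly $18\chi(\sO_S)$ as well, so the $g=3$ line is uniform), Theorem~\ref{Thm: other slope}(c.) for $g=4$, and Theorem~\ref{Thm: other slope}(d.) for $g\ge 5$; in each case one has a relative slope inequality of the shape $\alpha K^2_{S/C}\ge\beta\chi_f-\gamma(b-1)$ with explicit $\alpha,\beta,\gamma$, one substitutes $K^2_{S/C}=K_S^2-8(g-1)(b-1)$, $\chi_f=\chi(\sO_S)-(g-1)(b-1)$ and the Noether/Arakelov inequality $c_2(S)=12\chi(\sO_S)-K_S^2\ge 4(g-1)(b-1)$, and eliminates the auxiliary variable $(b-1)\ge 0$ to obtain a pure inequality between $K_S^2$ and $\chi(\sO_S)$. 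A short computation identifies the coefficient: for $g\ge 5$, using $\alpha=g^2-3g+1$, $\beta=2(g-1)(g-2)$, $\gamma=4(g-1)(g^2-4g+2)$ one gets $K_S^2\le\frac{12(g-1)(3g^2-4g-4)}{g(3g^2-12g+15)}\chi(\sO_S)$, and similarly $\frac{840}{47}$ for $g=4$ from $7K^2_{S/C}\ge 15\chi_f-48(b-1)$.

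The main obstacle, and the one point that needs genuine care rather than bookkeeping, is the inequality $c_2(S)\ge 4(g-1)(b-1)$ (or whatever lower bound on $c_2(S)$ in terms of $(b-1)$ is actually needed to close the linear program with the stated constants): in positive characteristic the topological Euler characteristic decomposition $c_2(S)=c_2(\text{generic fibre})\cdot c_2(C)+\sum(\text{local contributions})$ is subtler than over $\mathbb C$, the generic fibre may be singular so $c_2$ of the generic fibre is not simply $2-2g$, and the local contributions of wild fibres need not be non-negative in the naive sense. I expect this is handled either by invoking Shepherd-Barron's structural results on the Albanese fibration (which already force $g\ge 2$, $b\ge 2$ and presumably control the sign of these contributions) or by an argument bounding the geometric-genus defect; this is where I would expect the real work of the proof to lie, with the rest being the linear-programming elimination sketched above. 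A secondary check is that each of the four stated fractions is $\le 32$, which must follow from $g\ge 2$ (resp.\ $g\ge 3,4,5$) — one verifies the hyperelliptic fraction $\frac{(12g+8)(g-1)}{g^2-g-1}$ is decreasing and equals $32$ at $g=2$ — so that the separate Theorem~\ref{Cor: slope 1} indeed yields the global bound $K_S^2\le 32\chi(\sO_S)$ advertised in the abstract.
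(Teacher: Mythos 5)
Your overall skeleton is the same as the paper's: feed each slope inequality through $K^2_{S/C}=K_S^2-8(g-1)(b-1)$ and the formula for $\chi_f$, use Noether's formula to trade $\chi(\sO_S)$ for $\frac{1}{12}(K_S^2+c_2(S))$, and eliminate $(b-1)$ against a lower bound on $c_2(S)$. But the one input you yourself single out as ``where the real work lies'' is stated with the wrong sign and the wrong constant, and this is a genuine gap. You invoke $c_2(S)\ge 4(g-1)(b-1)$, justified by non-negativity of the singular-fibre contributions as over $\mathbb{C}$. In the situation of Theorem~\ref{Thm: SB} this inequality is false: the theorem only applies when $c_2(S)<0$ while $g,b\ge 2$ would force $c_2(S)\ge 4>0$, so your linear program closes only by proving (vacuously) that no such surface exists. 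The reason the classical decomposition fails is exactly the pathology driving this paper: the geometric generic fibre is a \emph{singular rational} curve with unibranch singularities, so its \'etale Euler characteristic is $2$ rather than $2-2g$, and Grothendieck--Ogg--Shafarevich gives only $c_2(S)\ge 2\cdot(2-2b)=-4(b-1)$, which is the inequality \cite[(3.3)]{Gu} that the paper actually uses --- and uses twice: once to get $K_S^2\ge c(g)\,(b-1)$ from the slope inequality, and once more in the final step $\frac{12\chi(\sO_S)}{K_S^2}=1+\frac{c_2(S)}{K_S^2}\ge 1-\frac{4(b-1)}{K_S^2}$. With $-4(b-1)$ in place of $4(g-1)(b-1)$ your elimination does reproduce the stated constants, so the fix is localized, but as written the key estimate is wrong and the claimed constants do not follow from what you assert.

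Two smaller points. First, $\deg(f_*\omega_{S/C})=\chi(\sO_S)-(g-1)(b-1)+l(f)$ with $l(f)=\dim_\sk(R^1f_*\sO_S)_{\mathrm{tor}}\ge 0$, not an equality without the torsion term; since $\varphi(g)>0$ you may discard $l(f)$ and the direction of the inequality survives, but you should say so rather than assert the identity. Second, in the $g=3$ case the hyperelliptic bound is $\frac{(12\cdot 3+8)\cdot 2}{5}\chi(\sO_S)=\frac{88}{5}\chi(\sO_S)<18\chi(\sO_S)$, not ``exactly $18\chi(\sO_S)$''; the $g=3$ line is still uniform, but for the reason that $\frac{88}{5}<18$, which is what the paper records.
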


By the above results and elementary computations, we have
\begin{thm} \label{Thm: main}
Let $S$ be a minimal smooth projective surface of general type. Then
$K_S^2\le 32\chi(\sO_S)$.
Moreover, when $18\chi(\sO_S)< K_S^2\le 32\chi(\sO_S)$, the Albanese fibration of $S$ must be a genus two fibration.
\end{thm}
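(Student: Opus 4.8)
The plan is to peel off the easy case $c_2(S)\ge 0$ and reduce everything else to Theorem~\ref{Cor: slope 1}. First, if $c_2(S)\ge 0$, then Noether's formula (\ref{formula: Noether}) gives $K_S^2=12\chi(\sO_S)-c_2(S)\le 12\chi(\sO_S)$; since $S$ is minimal of general type we have $K_S^2>0$, hence $\chi(\sO_S)>0$ and $K_S^2\le 12\chi(\sO_S)<18\chi(\sO_S)$. So such surfaces satisfy the theorem trivially and never meet the range $K_S^2>18\chi(\sO_S)$. I would then assume $c_2(S)<0$; by \cite[Thm.~6]{SB} the Albanese morphism induces a fibration $f\colon S\to C$ whose general fibre $F$ has genus $g\ge 2$ and whose base has genus $b=g(C)\ge 2$ — this is the Albanese fibration, which is exactly the setting of Theorem~\ref{Cor: slope 1}.

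Next I would feed $f$ into Theorem~\ref{Cor: slope 1}, splitting on the value of $g$. For $g=2$ the fibre $F$ is hyperelliptic (the relative canonical system exhibits $F$ as a double cover of a rational curve, still valid in characteristic $p$), so the first bound of Theorem~\ref{Cor: slope 1}, specialised at $g=2$, reads $K_S^2\le \frac{(12\cdot 2+8)(2-1)}{2^2-2-1}\chi(\sO_S)=32\chi(\sO_S)$, which is sharp. For $g\ge 3$, whichever of the four bounds of Theorem~\ref{Cor: slope 1} applies, I would check that it is $\le 18\chi(\sO_S)$: the $g=3$ bound is exactly $18\chi(\sO_S)$; the $g=4$ bound is $\frac{840}{47}\chi(\sO_S)<18\chi(\sO_S)$; and for $g\ge 5$ the two rational functions $\frac{(12g+8)(g-1)}{g^2-g-1}$ (hyperelliptic case) and $\frac{12(g-1)(3g^2-4g-4)}{g(3g^2-12g+15)}$ are both $\le 18$ — after clearing the (positive) denominators $g^2-g-1$ and $3g^2-12g+15=3((g-2)^2+1)$ this reduces to a polynomial inequality in $g$ valid for all $g\ge 5$, both functions being in fact decreasing with limit $12$. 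Hence $c_2(S)<0$ together with $g\ge 3$ forces $K_S^2\le 18\chi(\sO_S)$.

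Putting the cases together yields $K_S^2\le 32\chi(\sO_S)$ for every minimal smooth projective surface $S$ of general type; combined with $K_S^2>0$ this also gives $\chi(\sO_S)>0$, answering Shepherd-Barron's question. For the ``moreover'' statement, suppose $18\chi(\sO_S)<K_S^2\le 32\chi(\sO_S)$. Then $c_2(S)\ge 0$ is impossible, since it would give $K_S^2\le 12\chi(\sO_S)<18\chi(\sO_S)$ (using $\chi(\sO_S)>0$), and $g\ge 3$ is impossible, since it would give $K_S^2\le 18\chi(\sO_S)$. Therefore the Albanese fibration $f\colon S\to C$ has fibre genus $g=2$, i.e. it is a genus two fibration.

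The conceptual content of the argument lies entirely in Theorems~\ref{Thm: main-slope}, \ref{Thm: other slope} and \ref{Cor: slope 1}; what remains here is bookkeeping. The two points I would treat with care are that the ``hyperelliptic'' clause of Theorem~\ref{Cor: slope 1} — not one of the $g=3,4,\ge 5$ clauses — is the one governing a genus two fibration even when the generic fibre fails to be smooth (which can occur in positive characteristic), and the elementary but slightly tedious verification that the whole family of slope bounds for $g\ge 5$ stays uniformly below $18$. I do not expect any obstacle beyond these.
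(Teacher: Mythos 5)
Your proposal is correct and follows essentially the same route as the paper's own proof (Corollary~\ref{coro 1}): dispose of $c_2(S)\ge 0$ via Noether's formula, invoke Shepherd-Barron's structure theorem to get the Albanese fibration when $c_2(S)<0$, and then verify that the bounds of Theorem~\ref{Cor: slope 1} equal $32$ only at $g=2$ (hyperelliptic case) and stay at or below $18$ for all $g\ge 3$ by monotonicity of the two rational functions. The only cosmetic difference is that you split by $g=2$ versus $g\ge 3$ while the paper splits by hyperelliptic versus non-hyperelliptic, which amounts to the same case analysis.
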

Examples of surfaces $S$ with $K_S^2=32\chi(\sO_S)$ are given in \S~\ref{S: example of maximal slope}. Moreover, we give an example of $S$ whose Albanese fibration is of genus $3$ and $K_S^2=18\chi(\sO_S)$ in \S~\ref{S: counterexample to Xiao} to show that the inequality $18\chi(\sO_S)< K_S^2\le 32\chi(\sO_S)$ in the theorem is optimum.

This theorem then answers the question of Shepherd-Barron and leads to the following classification of surfaces with negative $\chi(\sO_S)$  after Liedtke \cite{Lie}. In addition, the above Miyaokao-Yau type inequality can also be used to study the canonical map of surfaces of general type as in \cite{Beauville}.

\begin{thm}[After \protect{\cite[Prop.~8.5]{Lie}}]
Let $S$ be a smooth projective surface over an algebraically closed field $\sk$ of characteristic $p>0$ with $\chi(\sO_S)<0$. Then:
\begin{enumerate}
\item either $S$ is birationally ruled over a curve of genus $1-\chi(\sO_S)$;

\item or $S$ is quasi-elliptic of Kodaira dimension one and $p=2$ or $3$.
\end{enumerate}
\end{thm}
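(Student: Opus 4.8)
The plan is to run through the Enriques--Kodaira classification of smooth projective surfaces in characteristic $p>0$, using that $\chi(\sO_S)$ is a birational invariant among smooth projective surfaces, and to feed Theorem~\ref{Thm: main} into the single step that concerns surfaces of general type; every other step is already contained in \cite[Prop.~8.5]{Lie}. Since $\chi(\sO_S)$ is unchanged under blow-ups, I would first pass to a convenient model within the birational class of $S$: whenever $\kappa(S)\ge 0$, replace $S$ by its minimal model (which does not change $\chi(\sO_S)$), and then argue by cases on $\kappa(S)\in\{-\infty,0,1,2\}$.

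For $\kappa(S)=2$: the minimal model has $K_S$ nef and big, hence $K_S^2>0$, and Theorem~\ref{Thm: main} gives $0<K_S^2\le 32\chi(\sO_S)$, so $\chi(\sO_S)\ge 1$, contradicting $\chi(\sO_S)<0$; thus this case is impossible, and this is the only place where the new content of the present paper is used. For $\kappa(S)=0$: the minimal model is an abelian surface, a (quasi-)bielliptic surface, a $K3$ surface or an Enriques surface, and in each case $\chi(\sO_S)\ge 0$, again impossible. For $\kappa(S)=-\infty$: the classification gives that $S$ is birationally ruled over a smooth curve $C$, whence $\chi(\sO_S)=1-g(C)$; the rational case $g(C)=0$ is excluded since it forces $\chi(\sO_S)=1>0$, so $g(C)=1-\chi(\sO_S)\ge 2$, which is alternative~(1).

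This leaves $\kappa(S)=1$: the minimal model carries a relatively minimal fibration $f\colon S\to C$ over a smooth curve whose general fibre has arithmetic genus one, so it is either a smooth elliptic curve or a cuspidal cubic. If $f$ is a genuine elliptic fibration then $\chi(\sO_S)\ge 0$ (a classical fact for elliptic surfaces, for instance via $\deg f_*\omega_{S/C}=\chi(\sO_S)\ge 0$), contradicting $\chi(\sO_S)<0$; hence $f$ is forced to be quasi-elliptic, and quasi-elliptic fibrations exist only when $p\in\{2,3\}$, which is alternative~(2).

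The hard part is not genuinely hard: the argument is bookkeeping layered on Theorem~\ref{Thm: main} and the Enriques--Kodaira classification in positive characteristic. The one point needing care is the non-negativity $\chi(\sO_S)\ge 0$ for genuine elliptic surfaces of Kodaira dimension one --- so that a quasi-elliptic fibration really is the only escape in the $\kappa(S)=1$ case, and no elliptic fibration in characteristic $2$ or $3$ slips through with negative $\chi$ --- but this is classical and is in any case included in \cite[Prop.~8.5]{Lie}. In short, the only novelty is the removal of the surface-of-general-type alternative by Theorem~\ref{Thm: main}; the rest is recorded in \cite{Lie}.
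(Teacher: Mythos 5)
Your proposal is correct and is exactly the argument the paper intends: the paper gives no separate proof of this theorem, presenting it as an immediate consequence of combining Theorem~\ref{Thm: main} (which forces $\chi(\sO_S)>0$ for minimal surfaces of general type, removing that alternative) with Liedtke's classification \cite[Prop.~8.5]{Lie}, which is precisely your case-by-case bookkeeping over the Kodaira dimension. The one step you rightly flag as needing care --- $\chi(\sO_S)\ge 0$ for genuinely elliptic surfaces, so that only quasi-elliptic fibrations in $p=2,3$ survive in the $\kappa=1$ case --- is indeed classical and already contained in the cited result.
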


This paper is organized as follows.

In \S~\ref{S: slope}, we first recall Xiao's approach for slope inequalities and then prove Theorem~\ref{Thm: main-slope} (=Theorem~\ref{Thm: Xiao's inequality}) and Theorem~\ref{Thm: other slope} (=Proposition~\ref{Prop:g=3} and Proposition~\ref{Prop: slope inequality small genus}).

In \S~\ref{S: M-Y},  we first prove Theorem~\ref{Cor: slope 1} (=Theorem~\ref{Thm: M_Y type}) by applying Theorem~\ref{Thm: main-slope} and Theorem~\ref{Thm: other slope} and then prove Theorem~\ref{Thm: main} (=Corollary~\ref{coro 1}).

Finally in \S~\ref{S: ex} we recall or give the following examples in positive characteristics:
\begin{itemize}
\item Raynaud's examples of minimal surfaces $S$ of general type with $c_2(S)<0$. In his examples, the Albanese fibration of $S$ is hyperelliptic with $g=\dfrac{p-1}{2}$ and meets the equality in Theorem~\ref{Thm: main} for hyperelliptic case. Moreover the Albanese fibration $f: S\to C$ meets Xiao's equality.

\item Examples of fibrations violating Xiao's slope inequality (hence proves Proposition~\ref{prop: co-Xiao}).

\item Examples of general type surfaces with $K_S^2=32\chi(\sO_S)$.
\end{itemize}

\begin{center}
{\bf Conventions}
\end{center}
\begin{itemize}

\item A surface fibration is a flat morphism $f: S\to C$ from a projective smooth surface to a smooth curve over an algebraically closed field such that $f_*\sO_S=\sO_C$. Note in particular, all geometric fibres of $f$ are connected. In positive characteristics, the general fibre of $f$ can be singular.

\item For a surface fibration $f: S\to C$, we denote by
\begin{itemize}
\item $K_S$ (resp. $K_C$): the canonical divisor of $S$ (resp. $K_C$);

\item $K_{S/C}:=K_S-f^*K_C$;

\item $\chi_f:=\deg(f_*\omega_{S/C})$;

\item $l(f):=\dim_\sk (R^1f_*\sO_S)_{\mathrm{tor}}$.
\end{itemize}
Let $g$ be the fibre genus of $f$ and $b:=g(C)$, then we have
\begin{align}
\label{Eq: K_f}K_{S/C}^2&=K_S^2-8(g-1)(b-1)\\
\label{Eq: chi_f}\chi_f&=\chi(\sO_S)-(g-1)(b-1)+l(f).
\end{align}

\item An integral curve over $\sk$ is called hyperelliptic if it admits a flat double cover to $\mathbb{P}_\sk^1$ and a surface fibration $f: S\to C$ is called hyperelliptic if a general fibre of $f$ is hyperelliptic.
\end{itemize}

\section{Slope inequalities of fibrations in positive characteristic}\label{S: slope}
We study some slope inequalities in positive characteristics in this section.  Our strategy is based on Xiao's approach on slope inequalities, and then, Xiao's slope inequality is proved for some special fibrations (see Theorem \ref{Thm: Xiao's inequality}). We also observe that Xiao's slope inequality can not hold for general case (see Remark \ref{rmk: counterexample to xiao}), and in turn, prove some other slope inequalities (see Proposition \ref{Prop:g=3} and Proposition \ref{Prop: slope inequality small genus}).
\subsection{Xiao's slope inequality in positive characteristics}\label{Sec: Xiao}
In the paper \cite{XG}, Xiao introduces an approach of slope inequality for surface fibrations  by studying the Harder-Narasimhan filtration of $f_*\omega_{S/C}$. For readers' convenience, we briefly recall the idea.

Let $C$ be a smooth projective curve over an algebraically closed field. For a vector bundle $E$ on $C$, let $$\mu(E):=\frac{\mathrm{deg}(E)}{\mathrm{rk}(E)}$$ where $\mathrm{rk}(E)$ and $\mathrm{deg}(E)$ denote the rank and degree of $E$ respectively. This vector bundle $E$ is called semi-stable if for any subbundle $E'\subseteq E$, one has $\mu(E')\leq \mu(E)$. One has the following well-known theorem.
\begin{thm}\label{Thm3.1} (Harder-Narasimhan filtration) For any vector bundle $E$ on $C$, there exists a unique filtration of subbundles  $$0:=E_0\subset E_1\subset \cdots \subset E_n=E$$
which is the so-called Harder-Narasimhan filtration, such that

(1) each subquotient bundle $E_i/E_{i-1}$ is semi-stable for $1\leq i\leq n$,

(2) $\mu_1>\cdots >\mu_n$, where $\mu_i:=\mu(E_i/E_{i-1})$ for $1\leq i\leq n$.
\end{thm}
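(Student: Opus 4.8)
The plan is to reconstruct the Harder--Narasimhan filtration in the classical way, by repeatedly splitting off a \emph{maximal destabilizing subbundle} and inducting on $\mathrm{rk}(E)$. Since $C$ is a smooth curve, any coherent subsheaf $F\subseteq E$ is contained in its saturation $F^{\mathrm{sat}}$, which is a subbundle (locally free, with locally free quotient) of the same rank and with $\deg(F^{\mathrm{sat}})\geq\deg(F)$; so it is harmless to work only with subbundles. First I would dispose of the single non-formal ingredient, \emph{boundedness}: the quantity $\mu_{\max}(E):=\sup\{\mu(F)\mid 0\neq F\subseteq E\ \text{a subbundle}\}$ is finite and attained. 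For a subbundle $F$ of rank $r$, $\det F$ is a line subbundle of the fixed bundle $\bigwedge^{r}E$, so it suffices to bound the degrees of line subbundles $L$ of a fixed vector bundle $V$ on $C$; picking $m\gg0$ so that $V(-m)$ embeds into a trivial bundle $\mathcal O_C^{\oplus N}$ (possible since $V^{\vee}(m)$ is globally generated), the composite $L(-m)\hookrightarrow V(-m)\hookrightarrow\mathcal O_C^{\oplus N}$ is nonzero, so some coordinate gives a nonzero section of the line bundle $L^{\vee}(m)$, forcing $\deg L\leq m\deg\mathcal O_C(1)$. Ranging over the finitely many ranks $r\leq\mathrm{rk}(E)$ gives $\mu_{\max}(E)<\infty$, and only finitely many degrees occur for each rank, so the supremum is attained.

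\textbf{The maximal destabilizing subbundle.} Among subbundles of $E$ of slope $\mu_{\max}(E)$, let $E_1$ be one of largest rank. It is semistable, since any subbundle $G\subseteq E_1\subseteq E$ has $\mu(G)\leq\mu_{\max}(E)=\mu(E_1)$. For uniqueness I would use the slope estimate for sums: from $0\to F\cap F'\to F\oplus F'\to F+F'\to 0$ (sum and intersection taken inside $E$ and then saturated, which only helps), additivity of rank and degree shows that $\mu(F)=\mu(F')=\mu_{\max}(E)$ forces $\mu(F+F')\geq\mu_{\max}(E)$, hence equality; therefore $E_1$ contains every subbundle of maximal slope and is unique.

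\textbf{Induction, strict decrease, and uniqueness of the filtration.} Set $\bar E:=E/E_1$, which is locally free of smaller rank, and let $E_1\subset E_2\subset\cdots\subset E_n=E$ be the filtration obtained by pulling the Harder--Narasimhan filtration of $\bar E$ back along $E\twoheadrightarrow\bar E$ (it exists by the inductive hypothesis); all subquotients $E_i/E_{i-1}$ are semistable by construction. To see $\mu_1>\mu_2$, note $\mu_2=\mu_{\max}(\bar E)$, and if some $\bar G\subseteq\bar E$ had $\mu(\bar G)\geq\mu(E_1)$ then its preimage $G\subseteq E$, of rank $>\mathrm{rk}(E_1)$, would satisfy $\mu(G)\geq\mu(E_1)=\mu_{\max}(E)$ because $\mu(G)$ is a weighted mean of $\mu(E_1)$ and $\mu(\bar G)$ --- contradicting the maximal rank of $E_1$; so $\mu_2<\mu_1$, and $\mu_2>\cdots>\mu_n$ is the inductive hypothesis for $\bar E$. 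For uniqueness, given a competing filtration $0=E_0'\subset\cdots\subset E_m'=E$ with the stated properties, I would use the standard fact that a nonzero map between semistable bundles cannot strictly increase slope: for any semistable subbundle $W\subseteq E$, taking $j$ minimal with $W\subseteq E_j$ makes $W\to E_j/E_{j-1}$ nonzero, so $\mu(W)\leq\mu_j\leq\mu_1$. Applying this to $W=E_1'$, and the symmetric statement for the filtration $(E_i')$ to $W=E_1$, gives $\mu_1=\mu_1'$; then $\mu(E_1')\leq\mu_j$ with the $\mu_j$ strictly decreasing forces $j=1$, i.e. $E_1'\subseteq E_1$, and symmetrically $E_1\subseteq E_1'$, so $E_1=E_1'$. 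Passing to $E/E_1$ and inducting on the rank finishes the uniqueness.

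\textbf{The main obstacle.} Everything after the first paragraph is formal bookkeeping with the additivity of rank and degree in short exact sequences; the one place where geometry genuinely intervenes is the boundedness of $\mu_{\max}(E)$ in the first paragraph (equivalently, Serre vanishing $H^0(E(-m))=0$ for $m\gg0$, or Grothendieck's boundedness of the relevant Quot functor), which uses the projectivity of $C$. I anticipate no characteristic-$p$ subtlety at all: the entire argument is characteristic-free, which is precisely why the paper may invoke it as ``well-known''.
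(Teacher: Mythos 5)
Your proof is correct: it is the classical existence-and-uniqueness argument for the Harder--Narasimhan filtration (boundedness of slopes of subbundles, the unique maximal destabilizing subbundle via the sum/intersection slope estimate, induction on the quotient, and uniqueness via the fact that nonzero maps of semistable bundles do not increase slope), and, as you note, it is characteristic-free. The paper offers no proof of Theorem~\ref{Thm3.1} at all --- it is quoted as a well-known result --- so there is nothing to compare against; the only characteristic-$p$ subtlety relevant to the paper enters later, in the passage from semistability to strong semistability via Theorem~\ref{Thm: Langer}, not here.
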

We denote by  $\mu_{\min}(E)$ the last slope $\mu_n$ of $E$.

For a relatively minimal surface fibration $f: S\to C$ of fibre genus $g\ge 2$ over    $\mathbb{C}$, by using the Harder-Narasimhan filtration of $f_{\ast}\omega_{S/C:}$ $$0:=E_0\subset E_1\subset \cdots \subset E_n:=f_{\ast}\omega_{S/C}$$
Xiao constructs a sequence of effective divisors $$Z_1\geq Z_2\geq \cdots \geq Z_n\geq 0$$ such that
$N_i=K_{S/C}-Z_i-\mu_i F\,\,\,\,(1\le i\le n)$ are nef $\mathbb{Q}$-divisors. Here $F$ is a fibre of $f$ and $\mu_i:=\mu(E_i/E_{i-1})$.
Then he uses the following elementary lemma to get a lower bound of $K^2_{S/C}$.
\begin{lem} \label{Lem3.5} (\cite[Lem.~2]{XG})
Let $f: S\rightarrow C$ be a relatively minimal fibration, with a general fibre $F$. Let $D$ be a nef (resp. $f$-nef) divisor on $S$, and suppose that there are a sequence of effective divisors $$Z_1\geq Z_2\geq \cdots \geq Z_n\geq Z_{n+1}=0$$ (resp. such that $Z_n$ is vertical)
and a sequence of rational numbers $$\mu_1>\mu_2>\cdots >\mu_n,\quad \mu_{n+1}=0$$
such that for every $i$, $N_i:=D-Z_i-\mu_i F,i=1,...,n$ is a nef $\mathbb{Q}$-divisor. Then $$D^2\geq \sum_{i=1}^n(d_i+d_{i+1})(\mu_i-\mu_{i+1}),$$
where $d_i=N_i\cdot F$.
\end{lem}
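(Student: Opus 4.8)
The plan is to deduce Lemma~\ref{Lem3.5} directly from the hypotheses by an induction on $n$, exactly following Xiao's original argument. First I would record the two basic facts that will be used repeatedly: since $D$ and each $N_i$ are nef (or $f$-nef, with the vertical divisor $Z_n$ ensuring the relevant intersection products still make sense and remain nonnegative), we have $D\cdot N_i\ge 0$, $N_i\cdot N_j\ge 0$ for all $i,j$, and $N_i\cdot F = D\cdot F = d_i$ — wait, more precisely $N_i\cdot F=(D-Z_i-\mu_i F)\cdot F = D\cdot F - Z_i\cdot F$, so I should be careful and simply set $d_i:=N_i\cdot F$ as in the statement and not assume it equals $D\cdot F$. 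The key algebraic identity is that one can telescope $D$ against the filtration: writing $\mu_{n+1}=0$, $Z_{n+1}=0$, one has the formal decomposition
\begin{equation}
D = N_1 + Z_1 + \mu_1 F = \sum_{i=1}^{n}\bigl(N_i + (Z_i-Z_{i+1}) + (\mu_i-\mu_{i+1})F\bigr)\cdot(\text{coefficients})
\end{equation}
— but this is not literally correct, so instead the right bookkeeping is to note $D - \mu_i F = N_i + Z_i$ and to expand $D^2 = D\cdot\bigl(\sum_{i}(\mu_i-\mu_{i+1})F + (\text{something})\bigr)$ only after verifying the scalar identity $\sum_{i=1}^n(\mu_i-\mu_{i+1}) = \mu_1$ and, more usefully, that $D\cdot F$ appears with the correct total weight.

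Concretely, the cleanest route is the one Xiao uses: prove by induction on $n$ that $D^2\ge \sum_{i=1}^n(d_i+d_{i+1})(\mu_i-\mu_{i+1})$ where $d_{n+1}:=D\cdot F$ in the base-type terms (one must check the indexing convention in the statement — here $Z_{n+1}=0$ and $\mu_{n+1}=0$, so the last summand is $(d_n+d_{n+1})(\mu_n-0)=(d_n+D\cdot F)\mu_n$). For $n=1$: $D = N_1 + Z_1 + \mu_1 F$, so $D^2 = D\cdot N_1 + D\cdot Z_1 + \mu_1 D\cdot F \ge N_1\cdot N_1 + \mu_1 N_1\cdot F + 0 + \mu_1 D\cdot F$, using $D\cdot Z_1\ge N_1\cdot Z_1 + \mu_1 F\cdot Z_1\ge 0$ and $D\cdot N_1 \ge N_1^2 + \mu_1 N_1\cdot F$; since $N_1^2\ge 0$ this gives $D^2\ge \mu_1(d_1 + D\cdot F)$, which is the claimed bound. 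For the inductive step, apply the $n=1$ computation to split off the contribution of the top slope $\mu_1$: write $D' := D - (\mu_1-\mu_2)F$ or rather run the filtration from level $2$ onward, observing that $N_i' := N_i$ for $i\ge 2$ are still nef and $Z_2\ge\cdots\ge Z_n\ge 0$, so the induction hypothesis applies to the shorter data $(D, Z_2,\ldots,Z_n,\mu_2,\ldots,\mu_n)$; the difference between the bound for $n$ and the bound for $n-1$ is exactly $(d_1+d_2)(\mu_1-\mu_2)$, and one checks this is absorbed by expanding $D^2$ against $N_1 + Z_1 + \mu_1 F$ and using $Z_1\ge Z_2$, nefness, and $N_1\cdot(D-Z_1-\mu_1F)=N_1^2\ge0$.

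The step I expect to be the main obstacle is the careful handling of the ``$f$-nef'' case and the vertical hypothesis on $Z_n$: when $D$ is only $f$-nef rather than nef, products like $D\cdot Z_i$ need not be nonnegative unless $Z_i$ is vertical, so one must track which of the $Z_i$ (and which components thereof) are vertical and verify that the contributions of horizontal parts are controlled by the $\mu_i F$ terms instead. Equally delicate is making sure that the inequalities $N_i\cdot N_j\ge 0$ are used only when legitimate — nef divisors pair nonnegatively, but an $f$-nef divisor paired with an $f$-nef divisor need not, so for the $f$-nef version one restricts to pairing $N_i$ with $F$ and with vertical effective divisors. Modulo this case analysis, everything else is the bilinear bookkeeping above, and the positivity inputs ($N_i$ nef, $Z_i$ effective and decreasing, $N_i^2\ge 0$, $N_i\cdot F = d_i$) are precisely what the hypotheses supply.
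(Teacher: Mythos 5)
The paper gives no proof of this lemma at all---it is quoted with a citation to Xiao \cite[Lem.~2]{XG}---so the comparison is with Xiao's original argument, which is indeed the bilinear bookkeeping you describe. Your base case $n=1$ is correct: $D^2=D\cdot N_1+D\cdot Z_1+\mu_1 D\cdot F\ge \mu_1(d_1+d_2)$ with $d_2=D\cdot F$, and you rightly observe both that $d_i=N_i\cdot F=D\cdot F-Z_i\cdot F$ need not equal $D\cdot F$, and that in the $f$-nef case the term $D\cdot Z_1$ is under control precisely because $Z_1=Z_n$ is vertical. Note also that in both versions of the statement every $N_i$ is assumed nef, so pairings $N_i\cdot N_j$ are always nonnegative; the only pairing that ever threatens to go wrong is $D$ against an effective divisor.

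The inductive step, however, has a genuine gap as formulated. The induction hypothesis applied to the truncated data $(D,Z_2,\dots,Z_n,\mu_2,\dots,\mu_n)$ yields the lower bound $D^2\ge\sum_{i=2}^{n}(d_i+d_{i+1})(\mu_i-\mu_{i+1})$, and your expansion of $D^2$ against $N_1+Z_1+\mu_1F$ yields another lower bound; but two lower bounds on the same quantity $D^2$ cannot be added, so the missing term $(d_1+d_2)(\mu_1-\mu_2)$ is never legitimately ``absorbed.'' The repair is to strengthen the inductive statement and telescope on $N_j^2$ rather than on $D^2$: from $N_{i+1}=N_i+(Z_i-Z_{i+1})+(\mu_i-\mu_{i+1})F$, the nefness of $N_i$ and $N_{i+1}$, and the effectivity of $Z_i-Z_{i+1}$, one gets
\[
N_{i+1}^2\;\ge\;N_{i+1}\cdot N_i+(\mu_i-\mu_{i+1})d_{i+1}\;\ge\;N_i^2+(\mu_i-\mu_{i+1})(d_i+d_{i+1}),
\]
and summing over $i=1,\dots,n$ with $N_{n+1}=D$ and $N_1^2\ge 0$ gives the lemma. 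In this arrangement only the final step $i=n$ pairs $D$ with an effective divisor (namely $D\cdot Z_n$), which is exactly why only $Z_n$ is required to be vertical in the $f$-nef case; all other steps involve only the nef $N_i$'s and the fibre $F$. Your positivity inputs are the right ones---it is only the shape of the induction that needs fixing.
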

Xiao's approach can not be applied directly to positive characteristics. A key point is the failure of the following lemma in positive characteristics.
\begin{lem}[\protect{\cite[Thm.~3.1]{MY}, or \cite[Lem.~3]{XG}}]\label{Lem: MY's lemma}
Over $\mathbb{C}$, for any vector bundle $E$ on $C$, the $\mathbb{Q}$-divisor $\mathcal{O}_{\mathbb{P}(E)}(1)-\mu_{\min}(E)\cdot \Gamma$ is nef on $\mathbb{P}(E)$.
\end{lem}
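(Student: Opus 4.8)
The plan is to verify nefness directly on irreducible curves. Write $\pi\colon\mathbb P(E)\to C$ for the structure morphism, so that $\Gamma$ is a fibre of $\pi$ and $\mathcal O_{\mathbb P(E)}(1)$ is $\pi$-ample. For an irreducible curve $C'\subset\mathbb P(E)$ contracted by $\pi$ one has $\Gamma\cdot C'=0$ and $\mathcal O_{\mathbb P(E)}(1)\cdot C'>0$, so $\bigl(\mathcal O_{\mathbb P(E)}(1)-\mu_{\min}(E)\,\Gamma\bigr)\cdot C'\ge 0$ is automatic; hence it suffices to treat irreducible curves $C'$ that dominate $C$.

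For such a $C'$, let $n\colon\widetilde C\to C'\hookrightarrow\mathbb P(E)$ be the normalization and set $\phi:=\pi\circ n\colon\widetilde C\to C$, a finite morphism of degree $d$. By the universal property of $\mathbb P(E)$, the composite $\widetilde C\to\mathbb P(E)$ corresponds to a line-bundle quotient $\phi^*E\twoheadrightarrow L$ with $n^*\mathcal O_{\mathbb P(E)}(1)\cong L$, and $n^*\Gamma=\phi^*(\mathrm{pt})$ has degree $d$. Since $n$ is birational onto $C'$, the projection formula gives
\[
\bigl(\mathcal O_{\mathbb P(E)}(1)-\mu_{\min}(E)\,\Gamma\bigr)\cdot C'=\deg_{\widetilde C}L-d\cdot\mu_{\min}(E),
\]
so the whole statement reduces to the inequality $\deg_{\widetilde C}L\ge d\cdot\mu_{\min}(E)$.

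I would obtain this from two facts. First, for any vector bundle $V$ on a smooth projective curve and any line-bundle quotient $V\twoheadrightarrow L$ one has $\deg L\ge\mu_{\min}(V)$; this follows by pushing the Harder--Narasimhan filtration of $V$ forward to $L$ and observing that each non-zero graded piece is a quotient of a semistable bundle of slope $\ge\mu_{\min}(V)$ (the torsion pieces contributing non-negative degree, the unique rank-one piece contributing degree $\ge\mu_{\min}(V)$). Second --- and this is where $\mathrm{char}\,\sk=0$ enters --- one has $\mu_{\min}(\phi^*E)=d\cdot\mu_{\min}(E)$: pulling back the Harder--Narasimhan filtration $0=E_0\subset\cdots\subset E_m=E$ along $\phi$, each $\phi^*(E_i/E_{i-1})$ is again semistable of slope $d\mu_i$ (semistability being preserved under finite, necessarily separable, base change in characteristic $0$), and the slopes $d\mu_1>\cdots>d\mu_m$ remain strictly decreasing, so the pulled-back filtration is the Harder--Narasimhan filtration of $\phi^*E$. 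Combining the two facts with $V=\phi^*E$ yields $\deg_{\widetilde C}L\ge\mu_{\min}(\phi^*E)=d\cdot\mu_{\min}(E)$, as desired.

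The crux, and the only non-formal step, is the preservation of semistability under $\phi^*$. In characteristic $0$ I would prove it by Galois descent: replacing $\widetilde C$ by a further cover so that $\phi$ becomes Galois with group $G$ (legitimate since all field extensions are separable), the Harder--Narasimhan filtration of $\phi^*E$ is unique and hence $G$-invariant, so it descends to a filtration of $E$; a subsheaf of $\phi^*E$ of slope $>\mu(\phi^*E)$ would then descend to one of $E$ of slope $>\mu(E)$, contradicting semistability. This is exactly the point that collapses in characteristic $p$, where Frobenius pullback can destabilize a semistable bundle; that failure is why Lemma~\ref{Lem: MY's lemma} is false in positive characteristic and why the sequel must establish Xiao's inequality only under the hypotheses of Theorem~\ref{Thm: main-slope}.
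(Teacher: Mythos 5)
Your argument is correct. Note first that the paper offers no proof of this lemma at all: it is quoted from \cite{MY} and \cite{XG}, and appears in the text only to explain what breaks in characteristic $p$ (the authors then work with Lemma~\ref{Lem3.3}, which substitutes strong semistability of the Harder--Narasimhan graded pieces for the characteristic-zero input). Your curve-by-curve reduction is essentially the standard proof of the cited result: nefness is checked separately on fibral curves (where it is automatic) and on multisections, where the universal property of $\mathbb{P}(E)$ in the quotient convention --- which is indeed the one in force here, since $\pi_*\mathcal{O}_{\mathbb{P}(E)}(1)=E$ --- converts the intersection number into $\deg_{\widetilde C}L-d\,\mu_{\min}(E)$, and the two facts you invoke ($\deg L\ge \mu_{\min}(V)$ for a quotient line bundle, and multiplicativity of $\mu_{\min}$ under finite separable pullback) are both standard and correctly proved. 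The only step deserving an extra word is the Galois descent of the Harder--Narasimhan filtration when $\phi$ is ramified: the $G$-invariant subsheaves of $\phi^*E$ descend over the \'etale locus and one passes to saturations to extend across the branch points, which does not change the slopes; this is routine. You have also put your finger on exactly the right point of failure in characteristic $p$: separability of $\phi$ is what makes $\mu_{\min}(\phi^*E)=d\,\mu_{\min}(E)$, and its breakdown for Frobenius pullbacks is precisely why this lemma, and with it Xiao's inequality, can fail in positive characteristic, as \S~\ref{S: counterexample to Xiao} of the paper shows.
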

A key observation of \cite{SSZ} is that one can apply the following Lemma~\ref{Lem3.3} instead of Lemma~\ref{Lem: MY's lemma}   to generalize Xiao's approach to positive characteristics. We now assume $C$ is defined over an algebraically closed field $\sk$ with $\mathrm{char}.(\sk)=p>0$ until the end of this section.
\begin{lem}\label{Lem3.3}
If $E_i/E_{i-1}$ appearing in the Harder-Narasimhan filtration  of a vector bundle $E$ on $C$ are all strongly semi-stable (definition recalled below), then the $\mathbb{Q}$-divisor $\mathcal{O}_{\mathbb{P}(E)}(1)-\mu_{\min}(E)\cdot \Gamma$ is nef on $\mathbb{P}(E)$, here $\Gamma$ is a fibre of $\mathbb{P}(E)\to C$.
\end{lem}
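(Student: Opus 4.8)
The plan is to reformulate the statement numerically and then reduce it, by base change, to the semistable case. Since nefness of a $\mathbb{Q}$-divisor is tested on irreducible curves, and an irreducible curve $B\subset\mathbb{P}(E)$ not contained in a fibre of $\mathbb{P}(E)\to C$ corresponds --- after replacing $B$ by the normalization $B'$ of its image in $C$ --- to a quotient line bundle $\nu^*E\twoheadrightarrow L$, where $\nu\colon B'\to C$ is the induced finite morphism and $\mathcal{O}_{\mathbb{P}(E)}(1)\cdot B$, $\Gamma\cdot B$ are proportional to $\deg L$ and $\deg\nu$ respectively, the lemma is equivalent to the assertion that $\mu_{\min}(\nu^*E)\ge\deg(\nu)\cdot\mu_{\min}(E)$ for every finite morphism $\nu\colon B'\to C$ of smooth projective curves (call this $(\ast)$). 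The reverse inequality always holds, since $E_n/E_{n-1}$ is a quotient of $E$ of slope $\mu_{\min}(E)$, so $(\ast)$ is in fact an equality.

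Next I would observe that $(\ast)$ follows at once from the claim that \emph{the pullback of a strongly semistable bundle under any finite morphism of smooth projective curves is again strongly semistable.} Granting this, for any finite $\nu$ the pulled-back filtration $0=\nu^*E_0\subset\nu^*E_1\subset\cdots\subset\nu^*E_n=\nu^*E$ has strongly semistable (in particular semistable) quotients $\nu^*(E_i/E_{i-1})$, of strictly decreasing slopes $\deg(\nu)\mu_1>\cdots>\deg(\nu)\mu_n$; by the uniqueness in Theorem~\ref{Thm3.1} this is the Harder--Narasimhan filtration of $\nu^*E$, whence $\mu_{\min}(\nu^*E)=\deg(\nu)\mu_n=\deg(\nu)\mu_{\min}(E)$, which is $(\ast)$.

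It then remains to prove the claim. Factor $\nu$ as $B'\xrightarrow{\nu_i}B_1\xrightarrow{\nu_s}C$ with $\nu_s$ finite separable and $\nu_i$ finite purely inseparable (take $k(B_1)$ to be the separable closure of $k(C)$ in $k(B')$). For the separable part, functoriality of the absolute Frobenius gives $F_{B_1}^{m}\circ\nu_s=\nu_s\circ F_{C}^{m}$, hence $F_{B_1}^{m\,*}(\nu_s^*E)\cong\nu_s^*(F_{C}^{m\,*}E)$ for all $m\ge 0$; since each $F_{C}^{m\,*}E$ is semistable (by strong semistability of $E$) and a finite separable pullback preserves semistability --- embed $\nu_s$ into a Galois cover $\pi\colon\widetilde B\to C$, note that instability of $\nu_s^*E$ would force instability of $\pi^*E$, and then descend the canonical (hence Galois-invariant) maximal destabilizing subsheaf of $\pi^*E$ to one of $E$, using that degree and rank scale correctly under finite pullback --- we conclude that $\nu_s^*E$ is strongly semistable. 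For the purely inseparable part, over an algebraically closed field every finite purely inseparable morphism of smooth projective curves is an iterate of the relative Frobenius, so $\nu_i^*(\nu_s^*E)$ is, up to a harmless twist by a Frobenius of the base field, an absolute Frobenius pullback of the strongly semistable bundle $\nu_s^*E$, hence again strongly semistable. This proves the claim, and with it the lemma.

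The main obstacle is precisely the phenomenon that makes the hypothesis necessary: in characteristic $p$ the pullback of a merely semistable bundle under a finite morphism --- even under the Frobenius itself --- can fail to be semistable, which is exactly the failure of Lemma~\ref{Lem: MY's lemma} that breaks Xiao's original argument. So the crux is to propagate semistability through the Frobenius and through arbitrary finite base changes, and the technical heart is the stability of strong semistability under finite pullbacks asserted in the claim; the rest is bookkeeping with Harder--Narasimhan filtrations.
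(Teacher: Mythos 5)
Your argument is correct and is essentially the standard one: the paper does not prove Lemma~\ref{Lem3.3} in the text but defers to \cite{SSZ}, where the nefness is reduced, exactly as you do, to testing on multisections $\nu\colon B'\to C$ and to the invariance of the Harder--Narasimhan filtration under finite pullback when the graded pieces are strongly semi-stable (separable pullbacks handled by Galois descent of the maximal destabilizing subsheaf, purely inseparable ones by identifying them with Frobenius iterates). The only blemish is the misstated functoriality $F_{B_1}^{m}\circ\nu_s=\nu_s\circ F_{C}^{m}$, which should read $\nu_s\circ F_{B_1}^{m}=F_{C}^{m}\circ\nu_s$; the isomorphism $F_{B_1}^{m\,*}(\nu_s^*E)\cong\nu_s^*(F_{C}^{m\,*}E)$ you deduce from it is the correct one.
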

Recall, let $F_C:C\rightarrow C$ be the (absolute) Frobenius morphism, a bundle $E$ on $C$ is called strongly semi-stable (resp., stable) if its pull back by $k$-th power $F_C^k$ is semi-stable (resp., stable) for any integer $k\geq 0$.

\begin{thm}\label{Thm: Langer} (\cite[Thm~3.1]{LA})  For any vector bundle $E$ on $C$, there exists an integer $k_0$ such that all  quotients $E_i/E_{i-1}$ ($1\leq i\leq n$) {appearing} in the Harder-Narasimhan filtration
$$0:=E_0\subset E_1\subset \cdots \subset E_n=F_C^{k\ast}E$$ are strongly semi-stable whenever $k\geq k_0$.
\end{thm}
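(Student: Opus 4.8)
The plan is to track how the slopes of the Harder--Narasimhan filtration grow under iterated Frobenius pullback and to show that after finitely many pullbacks the slopes can only be multiplied by $p$, with the filtration no longer refining. Write $F:=F_C$ and recall that on a curve $F$ multiplies degrees by $p$ and preserves ranks, so $\mu(F^*V)=p\,\mu(V)$ for every bundle $V$. Pulling back the Harder--Narasimhan filtration of $V$ gives a filtration of $F^*V$ whose first subbundle has slope $p\,\mu_{\max}(V)$ and whose last quotient has slope $p\,\mu_{\min}(V)$; this yields at once the monotonicity $\mu_{\max}(F^*V)\ge p\,\mu_{\max}(V)$ and $\mu_{\min}(F^*V)\le p\,\mu_{\min}(V)$. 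Hence the normalized slopes $a_k:=\mu_{\max}(F^{k\ast}E)/p^{k}$ form a non-decreasing sequence, and $b_k:=\mu_{\min}(F^{k\ast}E)/p^{k}$ a non-increasing one.

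The engine of the argument is a characteristic-$p$ estimate bounding the instability that a single Frobenius pullback can create. Using the canonical connection on $F^*V$ together with Cartier descent (equivalently, Sun's canonical filtration of $F^*V$, whose graded pieces are successive $\Omega_C^1$-twists of one fixed semistable bundle), I would prove that for any semistable $V$,
\[
\mu_{\max}(F^*V)-\mu_{\min}(F^*V)\le (p-1)\bigl(2g(C)-2\bigr)=:C_0,
\]
a bound depending only on $p$ and $g(C)$. Combined with the elementary inequality $\mu_{\max}(F^*V)\le \max_i \mu_{\max}\bigl(F^*(V_i/V_{i-1})\bigr)$ over the Harder--Narasimhan quotients of a general $V$, this gives $\mu_{\max}(F^*V)\le p\,\mu_{\max}(V)+C_0$ and dually $\mu_{\min}(F^*V)\ge p\,\mu_{\min}(V)-C_0$. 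Dividing by $p^{k}$ and summing the geometric tail yields $a_k\le \mu_{\max}(E)+C_0/(p-1)$ for all $k$; together with the monotonicity this shows that the limits $L_{\max}(E):=\lim_k a_k$ and $L_{\min}(E):=\lim_k b_k$ exist and are finite.

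With the limits in hand I would conclude by induction on $\operatorname{rk}E$, the rank-one case being trivial. If $E$ is already strongly semistable there is nothing to prove. Otherwise the crux is that $L_{\max}(E)$ is attained: there is a stage $k_1$ at which the maximal destabilizing subbundle $W\subseteq F^{k_1\ast}E$ is strongly semistable with $\mu(W)/p^{k_1}=L_{\max}(E)$, and $F^{(k-k_1)\ast}W$ remains the maximal destabilizing subbundle of $F^{k\ast}E$ for all $k\ge k_1$. Granting this, the quotient $Q:=F^{k_1\ast}E/W$ has strictly smaller rank, and at every finite stage the normalized maximal slope of $F^{(k-k_1)\ast}Q$ stays strictly below that of $F^{(k-k_1)\ast}W$; by the induction hypothesis the Harder--Narasimhan quotients of $Q$ become strongly semistable after further pullback, so these pieces together with $F^{(k-k_1)\ast}W$ constitute the strongly semistable graded pieces of the Harder--Narasimhan filtration of $F^{k\ast}E$ for all large $k$, which is exactly the assertion.

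The main obstacle is precisely the attainment of $L_{\max}(E)$: one must show that the non-decreasing rational sequence $a_k$ is eventually constant, not merely convergent. The increment bound $0\le a_{k+1}-a_k\le C_0/p^{k+1}$ and the fact that $a_k$ has denominator dividing $r!\,p^{k}$ do not by themselves force stabilization, and the substantive content of Langer's theorem is the proof that $L_{\max}(E)$ is rational and is realized by $F^{k_1\ast}E$ for some finite $k_1$. Once this is granted, the strong semistability of the maximal destabilizing piece $W$ and its persistence as the maximal destabilizing subbundle under all further Frobenius pullbacks follow formally from the equality $\mu_{\max}(F^{(k+1)\ast}E)=p\,\mu_{\max}(F^{k\ast}E)$, which closes the induction.
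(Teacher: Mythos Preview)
The paper does not prove this theorem at all; it is stated with a bare citation to Langer \cite[Thm~3.1]{LA} and used as a black box. So there is no ``paper's proof'' to compare against.

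Your sketch is a fair outline of the standard strategy---normalize the slopes by $p^k$, use the Frobenius instability bound $\mu_{\max}(F^*V)-\mu_{\min}(F^*V)\le (p-1)(2g-2)$ for semistable $V$ to get boundedness and hence convergence of $a_k$ and $b_k$, then induct on rank by peeling off the maximal destabilizing piece once it stabilizes. This is indeed how the argument goes.

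But you have not given a proof: you explicitly concede that the attainment of $L_{\max}(E)$ at a finite stage is ``the substantive content of Langer's theorem'' and that your denominator and increment estimates do not force it. That concession is accurate. A bounded monotone sequence of rationals with denominators in $\tfrac{1}{r!}\mathbb{Z}[1/p]$ need not be eventually constant, and nothing in your write-up rules out perpetual refinement. Langer's actual argument closes this gap via boundedness of families of semistable sheaves with bounded slope (so only finitely many Harder--Narasimhan types can occur among the $F^{k\ast}E$ after normalization), which is the real work and which you have omitted. What you have written is therefore a roadmap with the hardest room left unbuilt, not a proof; since the paper only cites the result, your expansion is more than the paper offers, but it is still not self-contained.
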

\begin{rmk}\label{rmk: g(C<1)}
When $g(C)\le 1$,  semi-stable vector bundles are already strongly semi-stable (see \cite{sun}).  In particular, we can take $k=k_0=0$ in Theorem~\ref{Thm: Langer}.
\end{rmk}
Now suppose $f: S\to C$ is a relatively minimal surface fibration of fibre genus $g\ge 2$. Take $E=f_*\omega_{S/C}$ and fix a $k\ge  k_0$. Denote by
\begin{equation}\label{notion of HN}
0=E_0\subset \cdots \subset E_n=F_C^{k\ast}E=F_C^{k\ast}f_*\omega_{S/C}
\end{equation}
  the Harder-Narasimhan filtration, $r_i:=\mathrm{rk}(E_i), \mu_i:=\mu(E_i/E_{i-1})$ and call $\wu_i:=\dfrac{\mu_i}{p^k}$ the normalised slopes. Then we have
  \begin{equation}\label{formula of chi_f}
  \chi_f=\dfrac{1}{p^k}\deg F_C^{k\ast} E=\dfrac{1}{p^k}\sum\limits_{i=1}^n r_i(\mu_i-\mu_{i+1})=\sum\limits_{i=1}^n r_i(\wu_i-\wu_{i+1}).
  \end{equation}

Let us now recall the construction given in \cite{SSZ} of effective divisors $$Z_1\geq Z_2\geq \cdots \geq Z_n\geq 0$$ so that $N_i:=p^kK_{S/C}-Z_i-\mu_iF$ is nef. Considering the commutative diagram:
$$\xymatrix{ S\ar@/^20pt/[rrr]^{F^k_S}\ar[rr]^{F^k}
\ar[drr]^f && S'\ar[r]^{\alpha}\ar[d]^{f'}
& S\ar[d]^{f}\\
 && C\ar[r]^{F^k_C}& C} $$
there defines a natural morphism:
$$f^*E_i\hookrightarrow f^*F_C^{k*}E=F^{k*}_Sf^*f_*\mathcal{O}_S(K_{S/C})\to F^{k*}_S\omega_{S/C}=\mathcal{O}_S(p^kK_{S/C}),$$
for each $i$. Denote by $\mathcal{L}_i\subseteq F^{k*}_S\omega_{S/C}$ the image of this morphism and we can write $\mathcal{L}_i=\sI_{T_i}\cdot F^{k*}_S\omega_{S/C}(-Z_i)$ for a unique closed susbscheme $T_i$ of codimension $2$ and a unique effective divisor $Z_i$. It is clear by construction that $Z_1\ge Z_2\ge \cdots Z_n\ge 0$ and $Z_n$ is vertical. Let $U_i:=S\backslash{T_i}$, thus there is a morphism relative over $C$
$$\phi_i: U_i\to \mathbb{P}(E_i)$$
such that $\phi_i^*\mathcal{O}_{\mathbb{P}(E_i)}(1)=\mathcal{L}_i|_{U_i}$ by the construction of $\sL_i$.  Now we note that the $\mathbb{Q}$-divisor $c_1(\mathcal{L}_i)-\mu_i F$ is nef by Theorem~\ref{Thm: Langer} and Lemma~\ref{Lem3.3} as the complement of $U_i$ consists of finitely many points, here $F$ is a general fibre of $f$. In other words, we have the nefness of $$N_i=c_1(\mathcal{L}_i)-\mu_i F$$ by construction. Then Xiao's approach applies in positive characteristic and we have
\begin{thm}\label{Thm: Xiao's inequality}
Let $f:S\to C$ be a relatively minimal fibration of genus $g\ge 2$, assume any one of the following assumptions is true:
\begin{enumerate}[(a.)]
\item the generic fibre of $f$ is hyperelliptic;

\item the generic fibre of $f$ is smooth;

\item the genus $b:=g(C)\le 1$,
\end{enumerate} then Xiao's slope inequality
 $K^2_{S/C}\ge \dfrac{4g-4}{g}\chi_f$ holds.
\end{thm}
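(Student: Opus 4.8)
The plan is to carry out Xiao's argument, with the positive-characteristic ingredients assembled above, the hypotheses (a), (b), (c) being exactly what allows one to control the two characteristic-$p$ difficulties: instability of $f_*\omega_{S/C}$ under Frobenius pull-back, and non-smoothness of the general fibre.

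First I would set up the common framework. Choose $k\ge k_0$ as in Theorem~\ref{Thm: Langer} (and $k=0$ when $b\le 1$, by Remark~\ref{rmk: g(C<1)}), so that one has the Harder--Narasimhan filtration (\ref{notion of HN}), the effective divisors $Z_1\ge\cdots\ge Z_n\ge 0$ with $Z_n$ vertical, and the nef $\mathbb Q$-divisors $N_i=c_1(\mathcal L_i)-\mu_iF=p^kK_{S/C}-Z_i-\mu_iF$ constructed in the excerpt. Since $f$ is relatively minimal of fibre genus $g\ge 2$, no fibre component is a $(-1)$-curve, hence $K_{S/C}$ is $f$-nef (in general not nef, which is why the relative form of Lemma~\ref{Lem3.5} is needed). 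Applying Lemma~\ref{Lem3.5} with $D=p^kK_{S/C}$ and $Z_n$ vertical then gives
\[
p^{2k}K_{S/C}^2\ \ge\ \sum_{i=1}^{n}(d_i+d_{i+1})(\mu_i-\mu_{i+1}),\qquad d_i:=N_i\cdot F=c_1(\mathcal L_i)\cdot F,
\]
with the boundary conventions $\mu_{n+1}=0$ and $d_{n+1}:=D\cdot F=p^k(2g-2)$ (i.e. $N_{n+1}:=D$). Normalising by $\wu_i=\mu_i/p^k$, $\dd_i=d_i/p^k$ and using (\ref{formula of chi_f}) to write $\chi_f=\sum_i r_i(\wu_i-\wu_{i+1})$, the target $K_{S/C}^2\ge\frac{4g-4}{g}\chi_f$ becomes the numerical inequality $\sum_i(\dd_i+\dd_{i+1})(\wu_i-\wu_{i+1})\ge\frac{4g-4}{g}\sum_i r_i(\wu_i-\wu_{i+1})$; note that already in the semistable case ($n=1$, $F$ smooth) this is an equality, so the bound cannot be improved.

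The numerical inequality is then closed by Xiao's combinatorial lemma from \cite{XG}, whose input is lower bounds for the $\dd_i$ coming from the geometry of a general fibre $F$, an integral Gorenstein curve of arithmetic genus $g$. Here $\mathcal L_i|_F$ is a globally generated line bundle of degree $p^k\dd_i$ admitting an $r_i$-dimensional space of generating sections, so Clifford's inequality for integral Gorenstein curves yields $p^k\dd_i\ge 2(r_i-1)$, while $p^k\dd_n=p^k(2g-2)$ whenever the (Frobenius-twisted) dualising sheaf of $F$ is globally generated; combined with $r_n=g$ and $\dd_1\le\cdots\le\dd_n$ (since $Z_i\ge Z_{i+1}$), these feed Xiao's lemma and finish the argument. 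In case (b) the dualising sheaf of $F$ is automatically globally generated and there is no Frobenius subtlety; this is exactly what is carried out in \cite{SSZ}, and case (c) with smooth generic fibre reduces to it. In case (c) with $b\le 1$ the role of Remark~\ref{rmk: g(C<1)} is that one may take $k=0$, so the whole argument runs with the Harder--Narasimhan filtration of $f_*\omega_{S/C}$ itself, just as over $\mathbb C$. For case (a) it is cleaner to sidestep the generic instability via the relative hyperelliptic structure: the relative $g^1_2$ gives a finite degree-$2$ morphism $\pi\colon S\to P$ onto a conic bundle $P\to C$, one writes $K_{S/C}$ in terms of $\pi^{*}$ of a divisor on $P$ together with the ramification, and then computes $K_{S/C}^2$ and $\chi_f=\tfrac{1}{2}\deg\pi_*\omega_{S/C}$ directly, as in the complex hyperelliptic case; this simultaneously pins down the equality case.

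The main obstacle, in both (a) and (c), is that in positive characteristic the general fibre $F$ may be singular, even non-normal, so the fibre-geometric bounds above are not automatic. One must (i) establish Clifford's inequality and the structure of special and of hyperelliptic linear systems on integral Gorenstein curves and keep track of the possible drop of geometric genus, so that, for instance, the global generation underlying $\dd_n=2g-2$ is not lost --- or, where it is lost, is compensated by vertical contributions to the $Z_i$; and (ii) when $p=2$, handle the possibility that the degree-$2$ morphism $\pi$ in case (a) is inseparable or wildly ramified --- precisely the regime of Raynaud's and Shepherd-Barron's examples (\cite{R}, \cite{SB}) --- so that the ramification formula and the ensuing intersection-number computation must be redone with the conductor (or the different) in place of the naive ramification divisor, and the equality case re-examined accordingly. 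Making the fibre-geometric estimates uniform over smooth and singular general fibres is where the real work lies; granted them, Xiao's combinatorial lemma completes the proof.
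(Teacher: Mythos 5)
Your framework for cases (b.) and (c.) matches the paper's, but two points need attention. First, the Clifford bound as you state it is off by a factor of $p^k$: you claim $\sL_i|_F$ has degree $p^k\dd_i=d_i$ and that Clifford gives $p^k\dd_i\ge 2(r_i-1)$, whereas the inequality actually needed to run Xiao's combinatorics against $\chi_f=\sum r_i(\wu_i-\wu_{i+1})$ is $d_i\ge p^k(2r_i-2)$, i.e.\ $\dd_i\ge 2r_i-2$. One cannot get this by applying Clifford directly on a fibre $F$ of $f$, because $\sL_i|_F$ sits inside $F_S^{k*}\omega_{S/C}|_F$, a $p^k$-th Frobenius twist, which is not special. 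The paper's proof instead descends to $S'=S\times_{C,F_C^k}C$, where the image $\sL_i'$ of $f'^*E_i$ lies in $\alpha^*\omega_{S/C}=\omega_{S'/C}$, so that $\sL_i'|_{F'}$ is genuinely special of degree $\dd_i$ and Clifford (for integral Gorenstein curves, \cite{Li}) applies. This is exactly where the hypotheses enter: one needs $\sL_i'$ locally free along a general fibre of $f'$, which follows from normality of the generic fibre of $f'$ --- automatic in case (b.), and in case (c.) because $k=0$ forces $f'=f$ and the generic fibre of $f$ is regular since $S$ is. Your text gestures at ``Frobenius-twisted dualising sheaves'' and ``global generation'' but does not make this reduction, and as written the bound you obtain is too weak to conclude.

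For case (a.) you propose a genuinely different route --- the classical double-cover computation via the relative $g^1_2$ and a conic bundle $P\to C$ --- and you yourself flag that when the generic fibre is singular (the only interesting case here, cf.\ Raynaud's examples) or when $p=2$ the degree-$2$ map may be inseparable or wildly ramified, so the ramification formula and the intersection computation ``must be redone.'' That is not a detail: it is the entire difficulty of the hyperelliptic case, and your proposal leaves it unresolved (``where the real work lies''). The paper avoids it completely by staying inside Xiao's framework: since $E_{i-1}\subset E_i$, the degrees of the induced maps $\phi_i'\colon S'\dashrightarrow\mathbb{P}(E_i)$ satisfy $\deg(\phi_i')\mid\deg(\phi_{i-1}')$, and $\deg(\phi_n')$ is the degree of the relative canonical map, which equals $2$ for a hyperelliptic generic fibre; hence $\deg(\phi_i')\ge 2$ for all $i$ and $d_i=p^k\deg(\phi_i')\deg(\phi_i(F))\ge p^k\cdot 2(r_i-1)$ follows from the elementary bound $\deg(\phi_i(F))\ge r_i-1$ for a non-degenerate curve in $\mathbb{P}^{r_i-1}$. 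No double-cover structure theory, no separability analysis, and no Clifford theorem on singular curves is needed in this case. As it stands your proof of case (a.) is incomplete, and cases (b.), (c.) need the $F$-versus-$F'$ correction above.
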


\begin{proof}
Let $d_i:=N_i\cdot F={\rm deg}(\sL_i|_{F}), d_{n+1}=p^k(2g-2)$ and $\dd_i:=\dfrac{d_i}{p^k}$, by Lemma~\ref{Lem3.5}, we have
\begin{align}\label{3.1.1}
p^{2k}K^2_{S/C} &\ge \sum^n_{i=1}(d_i+d_{i+1})(\mu_i-\mu_{i+1}) \ \  \text{or equivalently}
\end{align}
\begin{align}
 \label{3.1} K^2_{S/C}&\ge \sum^n_{i=1}(\dd_i+\dd_{i+1})(\wu_i-\wu_{i+1}).
\end{align}
  Then, if the Clifford type inequalities
\begin{equation}\label{3.2}
d_i\ge p^k(2r_i-2) \ \  \text{or equivalently} \ \ \dd_i\ge 2(r_i-1) \quad (1\le i\le n)
\end{equation}
hold, we will have by (\ref{3.1.1}) the following inequality (see \cite[pp.~695]{SSZ})
\begin{equation}\label{3.3}
K^2_{S/C}\ge 4\chi_f-\frac{2}{p^k}(\mu_1+\mu_n)=4\chi_f- 2(\wu_1+\wu_n) ,
\end{equation} which and inequality
\begin{equation*}
p^kK^2_{S/C}\ge (2g-2)(\mu_1+\mu_n) \ \  \text{or equivalently} \ \ K^2_{S/C}\ge (2g-2)(\wu_1+\wu_2)
\end{equation*}
 (obtained by applying Lemma~\ref{Lem3.5} to $D=p^k K_{S/C}, Z_1\ge Z_n\ge 0$ and $\mu_1\ge \mu_n$) imply
$$K^2_{S/C}\ge \frac{4g-4}{g}\chi_f$$
In conclusion,  our theorem follows if the Clifford type inequalities (\ref{3.2}) hold. It remains to estimate $d_i$ in order to prove (\ref{3.2}), consider the following commutative diagrams
$$\xymatrix{\ar@{-->}@/^20pt/[rr]^{\phi_i} S\ar[r]^{F^k}
\ar[dr]_f & S'\ar@{-->}[r]^{\phi_i'}\ar[d]^{f'}
& \mathbb{P}(E_i)\ar[dl]\\
 & C},\quad \xymatrix{\ar@/^20pt/[rrr]^{F^k_S} S\ar[rr]^{F^k}
\ar[drr]_f && S':=S\times_{C,F_C^k} C\ar[r]^{\ \ \ \ \ \ \alpha}\ar[d]^{f'}
& S\ar[d]^{f}\\
 && C\ar[r]^{F^k_C}& C}$$
where $\phi'_i: S'\dashrightarrow  \mathbb{P}(E_i)$ is defined by the image $\sL_i'\subseteq \alpha^*\omega_{S/C}$ of ${f'}^*(E_i)$:
$${f'}^*(E_i)\subset {f'}^*F_C^{k*}f_*\omega_{S/C}=\alpha^*f^*f_*\omega_{S/C}\to \alpha^*\omega_{S/C}.$$
Then ${\rm deg}(\phi_i')|{\rm deg}(\phi_{i-1}')$ and ${\rm deg}(\phi_n')={\rm deg}(\phi_{|\omega_{S'/C}|})$. Thus
$${\rm deg}(\phi_i')\ge 2 \quad (1\le i\le n)\,\,\text{when $f:S\to C$ is hyperellitic}.$$
Note that $\phi_i$ is well-defined on a general fiber $F$ and thus
$$d_i=N_i\cdot F=p^k{\rm deg}(\phi_i'){\rm deg}(\phi_i(F)), \quad {\rm deg}(\phi_i(F))\ge r_i-1.$$
or equivalently \begin{equation}\label{IEq for d_i}
\dd_i\ge \deg(\phi_i') (r_i-1).
\end{equation}
Now when $f:S\to C$ is hyperelliptic, we immediately have $$2=\deg(\phi'_n)\mid \deg(\phi'_i)$$ for all $i$ and hence the Clifford type inequality (\ref{3.2}) holds.

In non-hyperelliptic cases, if $\sL_i'$ is locally free long a general fibre $F'$ of $f':S'\to C$, then $\phi_i'$ is also defined along $F'$ and we have $d_i=p^k\deg(\sL_i'|_{F'}).$ Since $\sL'\subseteq\alpha^*\omega_{S/C}=\omega_{S'/C}$, $\sL'|_{F'}$ is a special line bundle and we have  $\deg(\sL_i'|_{F'})\ge 2r_i-2$ by Clifford's theorem (see \cite{Li}). Thus we have the desired Clifford type inequality
$$d_i=p^k\deg(\sL_i'|_{F'})\ge p^k(2r_i-2)\,\,(1\le i\le n).$$
Note that one sufficient condition for $\sL_i'$ to be locally free on $F'$ is the normality of the generic fibre $S\times_C \mathrm{Spec}(K(C))$ of $f'$. In fact, if the generic fibre is now normal, then $\sL'$ is automatically locally free on it. Hence $\sL'$ is locally free on a general fibre $F'$. The normality of the generic fibre $S\times_C \mathrm{Spec}(K(C))$ follows if (b.) the generic fibre of $f$ is smooth or (c.) $g(C)\le 1$. In case (b.) the generic fibre is moreover smooth and in case (c.) we can take $k=k_0=0$ by Remark~\ref{rmk: g(C<1)} and hence $f'=f$.

In conclusion, under either assumption of our theorem,  we have the Clifford type inequality (\ref{3.2}) and therefore $K_S^2\ge \dfrac{4g-4}{g}\deg f_*\omega_{S}$.
\end{proof}
\begin{rmk}\label{rmk: counterexample to xiao}
The Clifford type inequality (\ref{3.2}) can fail in general. As a result, Xiao's slope inequality does not hold in positive characteristics in general. We shall see such counterexamples in \S~\ref{S: counterexample to Xiao}.
\end{rmk}

\subsection{Other slope inequalities}
 We give some other slope inequalities in this subsection.
\begin{prop}[\cite{Horikawa, K1}]\label{Prop:g=3}
Let $f: S\to C$ be a non-hyperelliptic, relatively minimal surface fibration of genus $3$, then $K_{S/C}^2\ge 3\chi_f$.
\end{prop}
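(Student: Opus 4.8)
The plan is to run Xiao's machinery from \S~\ref{Sec: Xiao}, but to replace the Clifford-type inequality (\ref{3.2}) — which, as Remark~\ref{rmk: counterexample to xiao} warns, need not hold — by a genus-$3$ specific estimate for the degrees $\dd_i$ coming from the geometry of the non-hyperelliptic generic fibre. So first I would set $E=f_*\omega_{S/C}$, fix $k\ge k_0$ as in Theorem~\ref{Thm: Langer}, and form the Harder-Narasimhan filtration (\ref{notion of HN}) with normalised slopes $\wu_1>\cdots>\wu_n$ and ranks $r_1<\cdots<r_n=3$, together with the nef $\mathbb{Q}$-divisors $N_i=c_1(\sL_i)-\mu_iF$ and the effective divisors $Z_1\ge\cdots\ge Z_n\ge 0$. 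Applying Lemma~\ref{Lem3.5} to $D=p^kK_{S/C}$ gives inequality (\ref{3.1}), namely $K^2_{S/C}\ge\sum_{i=1}^n(\dd_i+\dd_{i+1})(\wu_i-\wu_{i+1})$ with $\dd_{n+1}=2g-2=4$, and combined with $\chi_f=\sum r_i(\wu_i-\wu_{i+1})$ it reduces everything to lower bounds on the $\dd_i$ in terms of $r_i$.

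The key step is the fibrewise estimate. Since $g=3$ and the generic fibre is non-hyperelliptic, the relative canonical map $\phi_{|\omega_{S'/C}|}$ (on the base-changed surface $f':S'\to C$ of the proof of Theorem~\ref{Thm: Xiao's inequality}) is birational onto its image — a plane quartic on a general fibre — so $\deg(\phi_n')=1$, and hence $\deg(\phi_i')=1$ for all $i$ as $\deg(\phi_i')\mid\deg(\phi_{i-1}')$. From (\ref{IEq for d_i}) we then get $\dd_i\ge r_i-1$, but this is the weak bound that fails to give Clifford and must be improved. The refinement I would use is: the image $\sL_i'|_{F'}$ is a subsheaf of $\omega_{F'}$ of degree $4$; when $r_i\ge 3$ one has $\dd_i\ge 4=2r_i-2$ trivially (the full canonical system), when $r_i=1$ then $\dd_i\ge 0=2r_i-2$, and the only delicate case is $r_i=2$, where one needs $\dd_i\ge 2$. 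For $r_i=2$, $\sL_i'|_{F'}$ is a degree-$\dd_i$ line subbundle of $\omega_{F'}$ whose sections (coming from a rank-$2$ subbundle $E_i$ of $f_*\omega$) move in at least a pencil; a line bundle of degree $\le 1$ on a genus-$3$ curve moving in a pencil forces the curve to be hyperelliptic (or rational), contradiction — so $\dd_i\ge 2$. This establishes $\dd_i\ge 2(r_i-1)$ in all cases, which is exactly (\ref{3.2}).

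Once (\ref{3.2}) holds we are on the rails of Theorem~\ref{Thm: Xiao's inequality}: inequality (\ref{3.3}) gives $K^2_{S/C}\ge 4\chi_f-2(\wu_1+\wu_n)$, and applying Lemma~\ref{Lem3.5} to $D=p^kK_{S/C}$, $Z_1\ge Z_n\ge 0$ yields $K^2_{S/C}\ge(2g-2)(\wu_1+\wu_n)=4(\wu_1+\wu_n)$. Eliminating $\wu_1+\wu_n$ from the two inequalities gives $K^2_{S/C}\ge\frac{4g-4}{g}\chi_f=\frac{8}{3}\chi_f$, which is weaker than the claimed $3\chi_f$. To reach the sharp constant $3$ I expect one must not discard information: rather than bounding by the extreme slopes only, keep the full sum $\sum(\dd_i+\dd_{i+1})(\wu_i-\wu_{i+1})$ with $\dd_i\ge 2(r_i-1)$ and also track the contribution of the rank-$2$ piece more carefully — in the genus-$3$ case the sequence of ranks can only be $(1,2,3)$, $(1,3)$, $(2,3)$, $(1,2)$, $(3)$, $(2)$, $(1)$ up to which $r_i$ actually occur, and a direct optimisation over these finitely many HN-types, using $\dd_n\ge d_{n+1}=4$ and $\dd_i\ge 2(r_i-1)$, should yield the coefficient $3$. \textbf{The main obstacle} is precisely this last optimisation: proving $\dd_i\ge 2(r_i-1)$ only recovers Xiao's generic bound $\frac{8}{3}\chi_f$, so the extra $\tfrac13\chi_f$ must come from exploiting that for non-hyperelliptic genus $3$ the canonical map is an embedding (giving a better bound on $\dd_n$, or on $d_i$ for the penultimate step) — carrying this through the Xiao-type linear-algebra inequality in a way that is clean rather than a long case check is the delicate part.
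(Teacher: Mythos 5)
Your route is not the paper's: the paper proves Proposition~\ref{Prop:g=3} by citation, asserting that Konno's argument in \cite{K1} --- which works with the relative canonical image of $S$ as a relative quartic inside the $\mathbb{P}^2$-bundle $\mathbb{P}(f_*\omega_{S/C})$ and does intersection theory there --- goes through in any characteristic. More importantly, your route contains a genuine gap that cannot be repaired as stated. The central claim of your second step, the Clifford-type inequality $\dd_i\ge 2(r_i-1)$ for non-hyperelliptic genus-$3$ fibrations, is false in positive characteristic: by the paper's own computation, (\ref{3.2}) implies Xiao's inequality $K^2_{S/C}\ge\tfrac{8}{3}\chi_f$, yet the example of \S~\ref{S: counterexample to Xiao} with $p=3$ is a non-hyperelliptic genus-$3$ fibration with $K^2_{S/C}=3\chi_f$ and $\chi_f<0$, which violates Xiao's inequality; hence (\ref{3.2}) must fail for it. The precise point where your argument for the case $r_i=2$ breaks is the tacit assumption that $\sL_i'|_{F'}$ is a line bundle on a curve to which the dichotomy ``a pencil of degree $\le 1$ forces hyperelliptic or rational'' applies: when the generic fibre of $f$ is singular, its Frobenius base change can be non-normal, $\sL_i'$ need not be locally free along a general fibre $F'$, and no such dichotomy is available. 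This is exactly why Theorem~\ref{Thm: Xiao's inequality} carries the restrictive hypotheses (a.), (b.), (c.).

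Even granting the Clifford step, your final step is only a plan (``should yield the coefficient $3$''), and the plan cannot succeed with the inputs you allow yourself: the constraints $\dd_i\ge 2(r_i-1)$ and $\dd_{n+1}=2g-2=4$ are already saturated by the one-step filtration $n=1$, $r_1=3$, $\dd_1=4$, for which Lemma~\ref{Lem3.5} gives exactly $K^2_{S/C}\ge 8\wu_1=\tfrac{8}{3}\chi_f$ and nothing better; so no optimisation over Harder--Narasimhan types reaches the constant $3$. That constant requires genuinely new geometric input --- in \cite{K1}, the realisation of the relative canonical model as a divisor in a known class on $\mathbb{P}(f_*\omega_{S/C})$ --- rather than a refinement of the slope-filtration bookkeeping.
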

This result is well known over $\mathbb{C}$ and the  proof in \cite{K1} works in any characteristics.

\begin{prop}\label{Prop: slope inequality small genus}
Let $f: S\to C$ be a relatively minimal surface fibration of fibre genus $g$ such that $K_S$ is nef. Let $b:=g(C)$, then we have:
\begin{enumerate}
\item  $K_S^2\ge \dfrac{2g-2}{g}\deg(f_*\omega_{S/\sk})$;

\item if $g=4$, then  $7K_{S/C}^2\ge 15\chi_f-48(b-1)$;

\item if $g\ge 5$, then    $$ K_{S/C}^2\ge \dfrac{2(g-1)(g-2)}{g^2-3g+1}\chi_f-\dfrac{4(g-1)(g^2-4g+2)}{g^2-3g+1}(b-1).$$
\end{enumerate}
\end{prop}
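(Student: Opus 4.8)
The plan is to re-run the Xiao-type argument of \S~\ref{Sec: Xiao}, but applied to $K_S$ itself --- this is where the hypothesis ``$K_S$ nef'' enters --- with $f_*\omega_S=f_*\omega_{S/C}\otimes\omega_C$ in place of $f_*\omega_{S/C}$. Fix $k\ge k_0$ as in Theorem~\ref{Thm: Langer}, let $0=E_0\subset\cdots\subset E_n=F_C^{k\ast}f_*\omega_S$ be the Harder--Narasimhan filtration, with $r_i=\mathrm{rk}(E_i)$, slopes $\mu_i$ and normalised slopes $\wu_i=\mu_i/p^k$; then $1\le r_1<\cdots<r_n=g$ and $\deg(f_*\omega_S)=\sum_{i=1}^n r_i(\wu_i-\wu_{i+1})$. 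Running the construction of \S~\ref{Sec: Xiao} with $\omega_S$ throughout produces effective divisors $Z_1\ge\cdots\ge Z_n\ge0$ with $Z_n$ vertical, and nef $\mathbb{Q}$-divisors $N_i=c_1(\sL_i)-\mu_i F$, where $\sL_i\subseteq F_S^{k\ast}\omega_S=\sO_S(p^kK_S)$ is the image of $f^*E_i$. Since $p^kK_S$ is nef, hence $f$-nef, Lemma~\ref{Lem3.5} gives
$$p^{2k}K_S^2\ \ge\ \sum_{i=1}^n(d_i+d_{i+1})(\mu_i-\mu_{i+1}),\qquad d_i:=N_i\cdot F,\quad d_{n+1}:=p^k(2g-2),$$
and a coarser application of Lemma~\ref{Lem3.5} (to $D=p^kK_S$ with $Z_1\ge Z_n\ge0$) gives $p^kK_S^2\ge(2g-2)(\mu_1+\mu_n)$. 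Write $\dd_i=d_i/p^k$; since $Z_n$ is vertical one also has $\dd_n=2g-2$ exactly.

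As in the proof of Theorem~\ref{Thm: Xiao's inequality}, on a general fibre $\dd_i\ge\deg(\phi_i')(r_i-1)\ge r_i-1$ by (\ref{IEq for d_i}). For part~(1) this already suffices: substituting the unconditional bounds $\dd_i\ge r_i-1$ and $\dd_{n+1}=2g-2$ into the two displayed inequalities and eliminating $\mu_1,\mu_n$ by the same elementary manipulation that appears in the proof of Theorem~\ref{Thm: Xiao's inequality} (now with $\dd_i\ge r_i-1$ replacing the Clifford-type bound (\ref{3.2})) gives $K_S^2\ge\frac{2g-2}{g}\deg(f_*\omega_S)$.

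For parts~(2) and~(3) the fibre genus, hence each $r_i$, is small, and one can improve on $\dd_i\ge r_i-1$. The dichotomy is: either $\deg(\phi_i')\ge2$, which gives the full Clifford-type bound $\dd_i\ge2(r_i-1)$; or $\deg(\phi_i')=1$, in which case $\phi_i|_F$ maps $F$ birationally onto a non-degenerate curve of $\mathbb{P}^{r_i-1}$, which --- being birational to a general fibre --- is not a rational normal curve as soon as a general fibre is not geometrically rational, so $\deg(\phi_i(F))\ge r_i$ and hence $\dd_i\ge r_i$. Either way $\dd_i\ge r_i$ whenever $r_i\ge2$ (the exceptional case of a geometrically rational general fibre, which forces $p$ small, is handled separately). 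Inserting $\dd_i\ge r_i$ for $r_i\ge2$, together with $\dd_n=\dd_{n+1}=2g-2$, into the two displayed inequalities, the problem becomes the purely combinatorial one of minimising $K_S^2/\deg(f_*\omega_S)$ over all admissible types $1\le r_1<\cdots<r_n=g$ and slope parameters $\wu_1\ge\cdots\ge\wu_n$, restricted to the range where $\deg(f_*\omega_S)>0$ (outside it the asserted inequality holds trivially, $K_S$ being nef). Carrying out this optimisation --- where the extra gain over the coefficient $\frac{2g-2}{g}$ of part~(1) comes precisely from exploiting the two ``boundary'' terms of size $2g-2$ --- produces the coefficient $\frac{15}{7}$ for $g=4$ and $\frac{2(g-1)(g-2)}{g^2-3g+1}$ for $g\ge5$. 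Finally, rewriting $K_S^2$ and $\deg(f_*\omega_S)$ in terms of $K_{S/C}^2$, $\chi_f$ and $b$ via (\ref{Eq: K_f}) and $\deg(f_*\omega_S)=\chi_f+g(2b-2)$ turns these bounds into the stated ones, the terms $-48(b-1)$ and $-\frac{4(g-1)(g^2-4g+2)}{g^2-3g+1}(b-1)$ being the corrections this substitution introduces.

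I expect the real obstacle to be the refined fibrewise estimate $\dd_i\ge r_i$: it requires controlling the geometry of the (possibly singular, possibly non-reduced) general fibre $F$ and of the sub-linear systems of its dualising sheaf cut out by the $E_i$, and in particular ruling out --- or absorbing the loss from --- a general fibre of geometric genus zero together with a map $\phi_i|_F$ that degenerates onto a rational normal curve. The subsequent optimisation is elementary but, for part~(3), needs some care to pin down the extremal Harder--Narasimhan type and to check that no competing type gives a smaller slope.
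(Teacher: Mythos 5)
Your overall framework (Xiao's method driven by the nefness of $K_S$) is the right one, and your route to part (1) is probably workable, though the paper's is more economical: it pairs the nef divisor $K_S$ against the single nef divisor $N_1=p^kK_{S/C}-Z_1-\mu_1F\equiv p^kK_S-(\mu_1+2p^k(b-1))F$ to get $K_S^2\ge 4(g-1)(b-1)+(2g-2)\wu_1$, and then uses only $g\wu_1\ge\chi_f$; no Clifford-type input and no control of $\wu_n$ is needed (your elimination of $\mu_1,\mu_n$ needs either the sign of $\wu_n$ or an exact cancellation that you have not exhibited, since $\dd_{n+1}=2g-2$ is fixed and the estimate is not homogeneous in the $\dd_i$).

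For parts (2) and (3) there are two genuine gaps. First, your refined fibrewise bound $\dd_i\ge r_i$ rests on excluding the case where $\phi_i|_F$ is birational onto a rational normal curve, which you rule out ``as soon as a general fibre is not geometrically rational'' and otherwise propose to ``handle separately''. But the geometrically rational case is not peripheral: by Theorem~\ref{Thm: SB}, the Albanese fibration of a surface with $c_2<0$ --- the situation this proposition is built for --- has singular rational geometric generic fibre, so the case you defer is exactly the one that matters, and you give no argument for it. Second, the combinatorial optimisation that is supposed to output the coefficients $\tfrac{15}{7}$ and $\tfrac{2(g-1)(g-2)}{g^2-3g+1}$ is asserted rather than performed, and this is the heart of the proof. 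The paper's actual argument never upgrades the bound $\dd_i\ge r_i-1$: it combines the inequality $K_{S/C}^2\ge(2g-2)\wu_1-4(g-1)(b-1)$ (the only place where nefness of $K_S$ enters) with the Xiao sum $K_{S/C}^2\ge\sum(\dd_i+\dd_{i+1})(\wu_i-\wu_{i+1})$, rewrites both in the variables $v_i=\wu_1-\wu_i$, and proves the purely combinatorial Lemma~\ref{lem: g=4,5} ($\Phi\le 5\Psi$ for $g=4$, $\Phi\le(2g-4)\Psi$ for $g\ge 5$) by a three-case analysis on $r_{n-1}$ and $r_{n-2}$, using only $\dd_j\ge r_j-1$ and $\sum e_i=4g-4$; eliminating $\wu_1$ then yields the stated constants. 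Without either a proof of your refined estimate in the rational-fibre case or an explicit optimisation, your argument does not reach the claimed coefficients.
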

\begin{proof}
We adopt the notations used in the previous section. Namely, let $E:=f_*\omega_{S/C}$ and  $$0=E_0\subset E_1\subset \cdots \subset E_n =F_C^{k\ast }E$$
be its Harder-Narasimhan filtration for some $k\ge k_0$ (see Theorem~\ref{Thm: Langer}). Take $\wu_i, r_i, Z_i, N_i$ and $\dd_i$ defined the same as in the previous section--in the paragraphs behind (\ref{notion of HN}). Then we see that $$N_1=p^kK_{S/C}-Z_1-\mu_1\cdot F\equiv p^kK_S-(\mu_1+2p^k(b-1))\cdot F$$ is nef. As $K_S$ is nef, we have
\begin{align*}
p^{2k}K_S^2&\ge p^k K_S\cdot N_1+ p^kK_S\cdot (p^kK_S-N_1)\\& \ge p^kK_S\cdot (\mu_1+2p^k(b-1))F \\
& \ge p^k(2g-2)(\mu_1+p^k(2b-2)).
\end{align*}
In other words,
\begin{equation}\label{Ieq mu_1}
K_S^2 \ge 4(g-1)(b-1)+(2g-2)\cdot \wu_1
\end{equation}
Note that by definition we have  $g\cdot \wu_1 \ge \chi_f=\deg f_*\omega_{S/\sk}-2g(b-1)$. Combining this inequality with the inequality (\ref{Ieq mu_1}), we obtain $$K_S^2\ge \dfrac{2g-2}{g}\deg(f_*\omega_{S/\sk}).$$
Since $K^2_{S}=K_{S/C}+8(g-1)(b-1)$, we can reformulate  (\ref{Ieq mu_1}) by:
  \begin{equation}\label{IEQ: mu_1}
   K_{S/C}^2 \ge (2g-2)\cdot \widetilde{\mu}_1-4(g-1)(b-1).
  \end{equation}
Let $v_i:=\widetilde{\mu}_1-\widetilde{\mu}_i$, so $0=v_1<v_2< \cdots <v_n$. By (\ref{formula of chi_f}) and (\ref{3.1}), we have:
\begin{align*}
K_{S/C}^2-4(g-1)\cdot \widetilde{\mu}_1&\ge - [\sum\limits_{i=1}^{n-1}(\widetilde{d}_i+\widetilde{d}_{i+1})(v_i-v_{i+1})+ 4(g-1)v_n] \\
\chi_f-g\cdot \widetilde{\mu}_1 &=-[\sum\limits_{i=1}^{n-1} r_i(v_{i+1}-v_i)+g v_n]
\end{align*}
With help of the following Lemma~\ref{lem: g=4,5}, immediately we have $$K_{S/C}^2-4(g-1)\cdot \widetilde{\mu}_1 \ge \left\{\begin{array}{cc}
5(\chi_f-g\cdot \widetilde{\mu}_1), & g=4\\
(2g-4)(\chi_f-g\cdot \widetilde{\mu}_1), & g\ge 5
\end{array} \right.$$
Combining  this inequality with (\ref{IEQ: mu_1}), after a simple calculation we then obtain the desired inequalities by eliminating $\widetilde{\mu}_1$.
\end{proof}

\begin{lem}\label{lem: g=4,5}
Let $ \Phi:=\sum\limits_{i=1}^{n-1}(\widetilde{d}_i+\widetilde{d}_{i+1})(v_i-v_{i+1})+ 4(g-1)v_n $ and $ \Psi:=\sum\limits_{i=1}^{n-1} r_i(v_{i+1}-v_i)+g v_n$. Then we have $\Phi\le \left\{ \begin{array}{cc}
\aligned 5\Psi, \,\,\,\,\,\, &g=4; \\
(2g-4)\Psi,  \,\,\, &g\ge 5.\endaligned
\end{array} \right.
 $
\end{lem}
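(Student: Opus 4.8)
The plan is to prove Lemma~\ref{lem: g=4,5} by a direct combinatorial estimate, treating the inequality term by term in the telescoping sums. First I would rewrite both $\Phi$ and $\Psi$ as nonnegative linear combinations of the ``increments'' $w_j := v_{j+1}-v_j \ge 0$ for $1\le j\le n-1$. Reindexing, one finds $\Psi = \sum_{j=1}^{n-1} r_j w_j + g v_n = \sum_{j=1}^{n-1}(r_j + g)\,?$ — more carefully, since $v_n = \sum_{j=1}^{n-1} w_j$, we get $\Psi = \sum_{j=1}^{n-1}(r_j + g) w_j$ is not quite right either; the clean way is $\Psi = \sum_{j=1}^{n-1} r_j w_j + g\sum_{j=1}^{n-1} w_j = \sum_{j=1}^{n-1}(r_j+g)w_j$. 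Hmm, that over-counts; instead I would keep $v_n$ separate and write $\Psi = \sum_{j=1}^{n-1} r_j w_j + g v_n$ and similarly $\Phi = \sum_{j=1}^{n-1}(\dd_j+\dd_{j+1})w_j + 4(g-1)v_n$ (note $v_i - v_{i+1} = -w_i$, so the signs work out to make $\Phi$ a sum of $(\dd_j+\dd_{j+1})w_j$). Then it suffices to show, coefficient by coefficient, that $\dd_j + \dd_{j+1} \le c\, r_j$ for each $j$ (with $c=5$ when $g=4$ and $c=2g-4$ when $g\ge 5$) together with $4(g-1) \le c\,g$, because then $\Phi = \sum (\dd_j+\dd_{j+1})w_j + 4(g-1)v_n \le \sum c\, r_j w_j + c\,g\,v_n = c\Psi$.

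The inequality $4(g-1) \le c\,g$ is immediate: for $g=4$ it reads $12 \le 20$, and for $g\ge 5$ it reads $4(g-1) \le (2g-4)g = 2g^2-4g$, i.e. $2g^2 - 8g + 4 \ge 0$, true for $g\ge 5$ (indeed $g\ge 4$). So the real content is the bound $\dd_j + \dd_{j+1} \le c\, r_j$. Here I would use two ingredients already available: the trivial degree bound $\dd_i \le 2g-2$ coming from $N_i\cdot F \le d_{n+1} = p^k(2g-2)$ (equivalently $\sL_i|_F$ has degree at most that of $\omega_F$, since $\sL_i \subseteq F_S^{k*}\omega_{S/C}$ restricts to a subsheaf of a line bundle of degree $p^k(2g-2)$ on $F$), and the rank bound: since $\dd_i$ is (up to normalization) the degree of $\phi_i(F) \subseteq \mathbb{P}(E_i)$ under a nondegenerate map of a curve spanning a $\mathbb{P}^{r_i-1}$, we need $\dd_i$ controlled from above by something linear in $r_i$. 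The key point is that $r_{j+1} > r_j$, so $r_j \le g-1$ for $j\le n-1$ (since $r_n = g$), and I would split into the cases where $r_j$ is small. When $r_j \ge 2$ one expects $\dd_{j+1} \le 2g-2 \le (2g-4)\cdot ?$; more precisely the plan is: $\dd_j \le \dd_{j+1}$ always (the $\dd_i$ are nonincreasing is false — actually $\dd_i$ need not be monotone, but $\dd_i \le 2g-2$ always), so $\dd_j + \dd_{j+1} \le 2(2g-2) = 4g-4$, and we want $4g-4 \le c\, r_j$. For $r_j \ge 2$ and $g\ge 5$ this is $4g-4 \le 2(2g-4) = 4g-8$, which is false — so this crude bound does not suffice and I must instead use $\dd_j \le$ roughly $2 r_j - 2$ via a Clifford-type estimate on the possibly-nonnormal generic fibre, or exploit $\dd_{j+1}$ being attached to rank $r_{j+1} > r_j$.

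The honest approach, then, is: for each $j$, use $\dd_i \ge \deg(\phi_i')(r_i - 1) \ge r_i - 1$ in the other direction is not what we want; rather I would bound $\dd_j \le 2g-2$ and separately argue $\dd_{j+1} \le 2 r_{j+1} - 2 \le 2(g-1)$ only in the non-hyperelliptic locally-free case — but since this lemma is applied where $K_S$ is nef without hyperellipticity assumptions, I should instead bound $\dd_j$ using the fact that $\phi_j(F)$ is a nondegenerate curve of degree $\dd_j$ in $\mathbb{P}^{r_j-1}$ computed via the inclusion into $|\omega_F|$; by the general position / Clifford-type bound for subsheaves of $\omega_F$ on an integral Gorenstein curve one gets $\dd_j \le 2r_j - 2$ when $\dd_j < 2g-2$, and $\dd_j = 2g-2$ forces $r_j = g$ (i.e. $j=n$). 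Hence for $j \le n-1$ we have $\dd_j \le 2r_j - 2$ and $\dd_{j+1} \le 2r_{j+1} - 2$, but that still involves $r_{j+1}$; using $r_{j+1} \le g$ gives $\dd_j + \dd_{j+1} \le (2r_j - 2) + (2g-2)$, and I would check $(2r_j-2)+(2g-2) \le c\,r_j$, i.e. $2g - 4 \le (c-2)r_j$; for $g=4$, $c=5$: $4 \le 3 r_j$, true for $r_j \ge 2$ — and $r_j \ge 2$ for $j\ge 2$, while the $j=1$ term needs separate handling since $r_1$ could be $1$. I expect the main obstacle to be exactly this bookkeeping of the small-rank boundary terms ($r_1 = 1$, or $r_j$ small relative to $g$) and pinning down the precise Clifford-type estimate $\dd_i \le 2r_i - 2$ for subsheaves of $\omega_F$ on a possibly singular integral curve $F$; everything else is the elementary telescoping rearrangement and the one-line check $4(g-1)\le cg$. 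Once the coefficientwise bound $\dd_j + \dd_{j+1} \le c\,r_j$ is established for all $j$ from $1$ to $n-1$ (handling $j=1$ by noting $\dd_1 + \dd_2$ is bounded using $r_2 \ge 2$ and $\dd_1 \le 2g-2$), the conclusion $\Phi \le c\Psi$ follows by summing against the nonnegative weights $w_j$ and $v_n$.
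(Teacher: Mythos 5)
Your reduction begins with a sign error that inverts the entire problem. Since $0=v_1<v_2<\cdots<v_n$, the quantity $v_i-v_{i+1}=-w_i$ is negative, so with $v_n=\sum_{j=1}^{n-1}w_j$ one gets
$$\Phi=\sum_{j=1}^{n-1}\bigl(4(g-1)-\dd_j-\dd_{j+1}\bigr)w_j,$$
whereas you write $\Phi=\sum_j(\dd_j+\dd_{j+1})w_j+4(g-1)v_n$, claiming ``the signs work out''; they do not. As a result you hunt for \emph{upper} bounds on the $\dd_i$ (the trivial $\dd_i\le 2g-2$ and a Clifford-type $\dd_i\le 2r_i-2$, which is itself backwards: Clifford bounds the degree of a special divisor from \emph{below} by $2(h^0-1)$, and $\dd_i\le 2r_i-2$ fails whenever $h^0(\sL_i|_F)>r_i$), when the inequality actually requires a \emph{lower} bound on $\dd_j+\dd_{j+1}$. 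Your proposed coefficientwise claim $\dd_j+\dd_{j+1}\le c\,r_j$ is moreover simply false: for $n=2$ and $r_1=1$ with $g=4$ one has $\dd_1+\dd_2\ge\dd_2=\dd_n=2g-2=6>5=5r_1$. (You are not helped by the statement itself: comparing with the two displayed identities in the proof of Proposition~\ref{Prop: slope inequality small genus} and with the identity $\Psi=\sum_i(r_i-r_{i-1})v_i$ used in the paper's proof, the intended $\Psi$ carries $r_i(v_i-v_{i+1})$, i.e.\ $\Psi=\sum_j(g-r_j)w_j$; as literally printed the lemma would be a weakening that is useless for the application.)

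The paper's argument is organized quite differently. It rewrites $\Phi=\sum_{i=1}^ne_iv_i$ with $e_1=\dd_1+\dd_2$, $e_i=\dd_{i+1}-\dd_{i-1}$ for $2\le i\le n-1$ and $e_n=2g-2-\dd_{n-1}$, so that $\sum_ie_i=4g-4$ and every $e_i\ge0$ (the $\dd_i$ are non-decreasing because $N_{i+1}-N_i=(Z_i-Z_{i+1})+(\mu_i-\mu_{i+1})F$ is effective), and takes $\Psi=\sum_i(r_i-r_{i-1})v_i$. The only geometric input is the lower bound $\dd_i\ge\deg(\phi_i')(r_i-1)\ge r_i-1$ of (\ref{IEq for d_i}), which caps $e_n$ (and, when $r_{n-1}=g-1$, also $e_{n-1}$) from above; a three-case analysis on $(r_{n-1},r_{n-2})$ then uses the monotonicity $v_1<\cdots<v_n$ to push the total mass $4g-4$ of the $e_i$ onto the top two or three $v_i$'s and compares with $c\Psi$. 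Note that the stronger Clifford bound $\dd_i\ge 2r_i-2$ is precisely what can fail in positive characteristic (Remark~\ref{rmk: counterexample to xiao}); the whole point of this lemma is to get by with the weaker bound $\dd_i\ge r_i-1$, so an argument built around Clifford-type control of the $\dd_i$ misses the mechanism entirely.
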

\begin{proof}
Take $e_1:=\dd_1+\dd_2, e_i=\dd_{i+1}-\dd_{i-1}, i=2,...,n-1$ and $e_n=(2g-2-\widetilde{d}_{n-1})$. Then
$\Phi =\sum\limits_{i=1}^n e_i v_i$  and
$\Psi=\sum\limits_{i=1}^n (r_i-r_{i-1})v_i, r_0:=0$. Moreover, we have $\sum\limits_{i=1}^ne_i=4g-4$.

{\bf Case $ r_{n-1}<g-1$  or $n=1$}:
we have  when $g=4$,
\begin{align*}
5\Psi &\ge 10v_n+5v_{n-1}\ge e_n\cdot v_n+(15-e_n)\cdot v_{n-1} \\
      & \ge e_n\cdot v_n +(12-e_n)\cdot v_{n-1} \ge \Phi
\end{align*} since it is clear that $e_n\le 2g-2<10$ . And when $g\ge 5$,
\begin{align*}
(2g-4)\Psi &\ge (4g-8)v_n+(2g-4)v_{n-1}\\
      &\ge e_n\cdot v_n+(6g-12-e_n)\cdot v_{n-1} \\
      & \ge e_n\cdot v_n +(4g-4-e_n)\cdot v_{n-1} \ge \Phi
\end{align*} since it is clear that $e_n\le 2g-2< 4g-8$.

{\bf Case $r_{n-1}=g-1, r_{n-2}<g-2$ or $n=2$}: we have  when $g=4$,
\begin{align*}
5\Psi &\ge 5v_n+10v_{n-1}\ge e_n\cdot v_n+(15-e_n)\cdot v_{n-1} \\
      & \ge e_n\cdot v_n +(12-e_n)\cdot v_{n-1} \ge \Phi
\end{align*} since in this case $e_n=6-\dd_{n-1}\le 6-r_{n-1}+1=4<5$ by (\ref{IEq for d_i}). And when $g\ge 5$,
\begin{align*}
(2g-4)\Psi &\ge (2g-4)v_n+(4g-8)v_{n-1}\\
&\ge e_n\cdot v_n+(6g-12-e_n)\cdot v_{n-1} \\
      & \ge e_n\cdot v_n +(4g-4-e_n)\cdot v_{n-1} \ge \Phi
\end{align*} since in this case $e_n=2g-2-\dd_{n-1}\le g<2g-4$.

{\bf Case $r_{n-1}=g-1, r_{n-2}=g-2$}:
we have  when $g=4$,
\begin{align*}
5\Psi &\ge 5v_n+5v_{n-1}+5v_{n-2}\\
      &\ge e_n\cdot v_n+e_{n-1}\cdot v_{n-1}+(15-e_n-e_{n-1})\cdot v_{n-2} \\
      & \ge e_n\cdot v_n +e_{n-1}\cdot v_{n-1}+(12-e_n-e_{n-1})\cdot v_{n-2} \ge \Phi
\end{align*} since in this case $e_n\le 4$ and $e_{n-1}=6-\dd_{n-2}\le 6-r_{n-2}+1=5$. And when $g\ge 5$,
\begin{align*}
(2g-4)\Psi &\ge (2g-4)v_n+(2g-4)v_{n-1}+(2g-4)v_{n-2}\\
       &\ge e_n\cdot v_n+e_{n-1}\cdot v_{n-1}+(6g-12-e_n-e_{n-1})\cdot v_{n-2} \\
      &\ge e_n\cdot v_n+e_{n-1}\cdot v_{n-1}+(4g-4-e_n-e_{n-1})\cdot v_{n-2} \ge \Phi
\end{align*} since in this case $e_n\le g$ and $e_{n-1}=(2g-2)-\dd_{n-2}\le g+1\le 2g-4$.
\end{proof}
\section{Miyaoka-Yau type inequality in positive characteristics}\label{S: M-Y}
We then start to study the Miyaoka-Yau type inequality.  Suppose $S$ is a minimal surface of general type over an algebraically closed field $\sk$ with $\mathrm{char}.(\sk)=p>0$. If $c_2(S)>0$, we have an immediate Miyaoka-Yau type inequality $c_1^2(S)\le 12\chi(\sO_S)$ obtained from (\ref{formula: Noether}). Thus, it suffices to discuss for $S$ with $c_2(S)<0$.

We firstly recall a fundamental theorem on the structure of algebraic surfaces of general type with negative $c_2$ due to Shepherd-Barron.
\begin{thm}(Shepherd-Barron, see \cite[Theorem~8]{SB})\label{Thm: SB}
If $c_2(S)<0$, then the Albanese map of $S$ induces a fibration: $f: S\to C$ with
\begin{itemize}
\item $C$ is a nonsingular projective curve of genus $b:=g(C)\ge 2$ and $f_{\ast}{\sO_S}\cong \sO_C$;

\item the fibre (arithmetic) genus $g:=p_a(F)\ge 2$;

\item the geometric generic fibre is a singular rational curve with cusp singularity.
\end{itemize}
\end{thm}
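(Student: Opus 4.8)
\noindent\textit{Proof proposal.}\ This is Shepherd-Barron's theorem, so the plan is to reconstruct the structure of his argument. Since $S$ is minimal of general type, $K_S^2=c_1^2(S)>0$; and $c_2(S)=e(S)=2-2b_1(S)+b_2(S)$ with $b_2(S)\ge 1$, so the hypothesis $c_2(S)<0$ already forces $b_1(S)\ge 2$, i.e.\ $q':=\dim\mathrm{Alb}(S)\ge 1$. Hence the Albanese morphism $\mathrm{alb}\colon S\to A:=\mathrm{Alb}(S)$ is non-constant, and the whole theorem reduces to one claim: \emph{$\mathrm{alb}$ is not generically finite onto its image}. Granting this, $\mathrm{alb}(S)$ is a curve; the Stein factorization of $S\to\mathrm{alb}(S)$ produces a fibration $f\colon S\to C$ with $f_*\sO_S=\sO_C$; and $C$ admits a finite morphism onto the curve $\mathrm{alb}(S)\subseteq A$, so $g(C)\ge 1$.

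Proving that $\mathrm{alb}$ is not generically finite is the heart of the matter, and the step I expect to be the main obstacle. Over $\C$ the claim is vacuous, as the Bogomolov–Miyaoka–Yau inequality already forces $c_2(S)>0$ for every minimal surface of general type. In characteristic $p$ one argues instead from instability of the cotangent bundle: the Bogomolov inequality $4c_2\ge c_1^2$ holds for \emph{strongly} semistable sheaves, but here $4c_2(\Omega^1_S)-c_1^2(\Omega^1_S)=4c_2(S)-K_S^2<0$, so, by Langer's results on Frobenius pullbacks of semistable sheaves, some Frobenius pullback $(F_S^k)^*\Omega^1_S$ is already unstable; descending a destabilizing subsheaf back to $S$ by means of the Cartier operator and Ekedahl's theory of $p$-closed foliations then produces on $S$ a foliation/pencil structure incompatible with generic finiteness of $\mathrm{alb}$. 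I would organize this part so that its output is precisely the geometric statement that $\mathrm{alb}(S)$ is a curve.

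Given the fibration $f\colon S\to C$ with general fibre $F$, the three bulleted assertions follow formally from the Euler-number formula for a surface fibration,
\[ c_2(S)=e(S)=e(C)\,e(F)+\sum_{t\in C}\bigl(e(f^{-1}(t))-e(F)\bigr)+(\text{nonnegative wild ramification terms}). \]
Each summand is $\ge 0$, since in a flat family of curves the Euler number of a special fibre never drops below that of the general one, so $c_2(S)<0$ forces $e(C)\,e(F)<0$. As $g(C)\ge 1$ we have $e(C)\le 0$, and $e(C)=0$ would give $c_2(S)\ge 0$; hence $e(C)<0$, i.e.\ $b=g(C)\ge 2$, and correspondingly $e(F)>0$. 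Next, $g:=p_a(F)\ge 2$, for a fibration with $p_a(F)\le 1$ is ruled, elliptic, or quasi-elliptic and would force $\kappa(S)\le 1$. Finally, the generic fibre $S_\eta$ of $f$ over $K(C)$ is an open subscheme of the regular scheme $S$, hence regular, so the singularities of the geometric general fibre are created purely by the (inseparable) base change to $\overline{K(C)}$ and are therefore unibranch; its normalization $\widetilde F$ is then a universal homeomorphism onto it, whence $e(F)=e(\widetilde F)=2-2\,p_g(\widetilde F)$, and $e(F)>0$ forces $p_g(\widetilde F)=0$, i.e.\ $\widetilde F\cong\mathbb P^1$. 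Combined with $p_a(F)=g\ge 2$, this exhibits the geometric general fibre as a unibranch rational curve of $\delta$-invariant $g$ — a singular rational curve with cusp singularities — and all three assertions are in hand.
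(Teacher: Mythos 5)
First, a point of context: the paper does not prove this statement at all --- it is quoted from Shepherd-Barron \cite[Theorem~8]{SB} and used as a black box, so there is no in-paper proof to compare against. Judged on its own terms, your proposal has the right architecture: reduce everything to showing that the Albanese image is a curve, then run the Euler-number formula for the resulting fibration to get $b\ge 2$ and $e(F)>0$, and finally analyse the geometric generic fibre. Your last paragraph (the generic fibre is regular, so the geometric generic fibre acquires only unibranch singularities under the inseparable base change to $\overline{K(C)}$, its normalization is a universal homeomorphism, and $e(F)>0$ then forces $\widetilde F\cong \mathbb{P}^1$ while $p_a=g\ge 2$ forces it to be singular) is the standard and correct argument, as is the exclusion of $p_a(F)\le 1$ via the classification of surfaces.

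The genuine gap is the step you yourself flag as the main obstacle: you never actually prove that the Albanese morphism is not generically finite. Observing that $4c_2(S)-c_1^2(S)<0$, so that $\Omega^1_S$ cannot be strongly semistable, and then saying that Langer's theorem plus Ekedahl's theory of $p$-closed foliations ``produces a foliation/pencil structure incompatible with generic finiteness'' is a description of a strategy, not an argument. This is precisely the hard content of Shepherd-Barron's Theorems 6--8: one must show that a destabilizing line bundle $\sL\subset (F_S^k)^*\Omega^1_S$ with positive self-intersection can be arranged to be $p$-closed, descend it to a fibration of $S$, and then relate that fibration to the Albanese map --- none of which is automatic, and none of which is supplied here. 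A second, smaller gap: your claim that in the Euler-number formula ``each summand is $\ge 0$, since in a flat family of curves the Euler number of a special fibre never drops below that of the general one'' is itself nontrivial exactly in the situation at hand, where the geometric generic fibre is singular; this is the inequality $c_2(S)\ge -4(b-1)$ that the paper later cites as \cite[(3.3)]{Gu}, and it needs a reference or proof rather than an appeal to the smooth-generic-fibre intuition.
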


By abuse of language, we call such $f:S\to C$ as the Albanese fibration of $S$. As an application of  Theorem~\ref{Thm: Xiao's inequality}, Proposition~\ref{Prop:g=3} and Proposition~\ref{Prop: slope inequality small genus}, we have
\begin{thm}\label{Thm: M_Y type}
Let $f:S\to C$ be the Albanese fibration of $S$. Then
$$K_S^2\le \left\{\begin{array}{cc}
\dfrac{(12g+8)(g-1)}{g^2-g-1}\chi(\sO_S), &\text{if generic fibre is hyperelliptic};\\

18\chi(\sO_S), & \text{if \, $g=3$}; \\

\dfrac{840}{47}\chi(\sO_S), & \text{if \, $g=4$}; \\

\dfrac{12(g-1)(3g^2-4g-4)}{g(3g^2-12g+15)}\chi(\sO_S), & \text{if $g\ge 5$}.
\end{array}\right.$$
\end{thm}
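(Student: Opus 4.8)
The plan is to combine the structural information on the Albanese fibration from Theorem~\ref{Thm: SB} with the slope inequalities already established (Theorem~\ref{Thm: Xiao's inequality}, Proposition~\ref{Prop:g=3}, Proposition~\ref{Prop: slope inequality small genus}), and then to eliminate the fibration invariants in favour of $K_S^2$ and $\chi(\sO_S)$. Throughout, $f:S\to C$ is the Albanese fibration, so by Theorem~\ref{Thm: SB} we have $b=g(C)\ge 2$ and fibre genus $g\ge 2$. The two basic dictionary identities are (\ref{Eq: K_f}) and (\ref{Eq: chi_f}), i.e. $K_{S/C}^2=K_S^2-8(g-1)(b-1)$ and $\chi_f=\chi(\sO_S)-(g-1)(b-1)+l(f)$; since $l(f)\ge 0$ we always have $\chi_f\ge \chi(\sO_S)-(g-1)(b-1)$, and since $K_S$ is nef (indeed ample up to contracting $(-2)$-curves, $S$ being minimal of general type) the hypotheses ``$K_S$ nef'' in Proposition~\ref{Prop: slope inequality small genus} are automatically satisfied here.

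First I would treat the hyperelliptic case and the $g=3$ non-hyperelliptic case, which are the cleanest. For these, the relevant slope bounds are $K_{S/C}^2\ge \frac{4g-4}{g}\chi_f$ (Theorem~\ref{Thm: Xiao's inequality}, applicable since the geometric generic fibre is rational hence the generic fibre is hyperelliptic when $g\le 2$ — wait, one must be slightly careful: the \emph{arithmetic} genus is $g$, and Theorem~\ref{Thm: Xiao's inequality}(a.) asks the generic fibre to be hyperelliptic) and $K_{S/C}^2\ge 3\chi_f$ (Proposition~\ref{Prop:g=3}) respectively. Substituting the dictionary identities, a bound of the shape $K_{S/C}^2\ge \lambda\,\chi_f$ becomes
\[
K_S^2-8(g-1)(b-1)\ \ge\ \lambda\bigl(\chi(\sO_S)-(g-1)(b-1)\bigr),
\]
that is $K_S^2\ \ge\ \lambda\,\chi(\sO_S)+(8-\lambda)(g-1)(b-1)$. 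This is the wrong direction: we want an \emph{upper} bound on $K_S^2$. The point is that for the Albanese fibration there is a \emph{second} inequality coming from the fact that $C$ is the Albanese curve, which forces a lower bound on $(g-1)(b-1)$ in terms of $\chi(\sO_S)$; concretely, Shepherd-Barron's analysis (or a direct computation with $c_2(S)=2\chi(\sO_S)-K_S^2/6\cdot\ldots$ via Noether) gives that when $c_2(S)<0$ one has $q(S)=b$, hence $(g-1)(b-1)$ is comparable to $\chi(\sO_S)$ from below, and combining the slope lower bound with this produces the stated upper bounds. So the real first step is to pin down the inequality $q=b$ and translate $c_2(S)<0$ into a usable lower bound on $(g-1)(b-1)$; I expect the needed statement to be $\chi(\sO_S)\le (g-1)(b-1)+l(f)$ together with $\chi_f\ge 1$ (or $\chi_f\ge 0$ with an extra argument), which is exactly what (\ref{Eq: chi_f}) and semipositivity-type inputs give.

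Granting that, the elimination is routine linear algebra in each case: from $K_{S/C}^2\ge\lambda\chi_f$, $K_{S/C}^2=K_S^2-8(g-1)(b-1)$, $\chi_f=\chi(\sO_S)-(g-1)(b-1)+l(f)$, and the lower bound on $(g-1)(b-1)$, one solves for the worst case of the free parameter $(g-1)(b-1)$ (it sits on both sides, so the extremal configuration is determined by the sign of $8-\lambda$), getting $K_S^2\le \frac{8\lambda}{\lambda-?}\chi(\sO_S)$-type bounds; tracking constants yields $\frac{(12g+8)(g-1)}{g^2-g-1}$ for $\lambda=\frac{4g-4}{g}$ in the hyperelliptic case and $18$ for $\lambda=3,\ g=3$. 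For $g=4$ and $g\ge 5$ I would instead feed in the sharper inequalities $7K_{S/C}^2\ge 15\chi_f-48(b-1)$ and $K_{S/C}^2\ge \frac{2(g-1)(g-2)}{g^2-3g+1}\chi_f-\frac{4(g-1)(g^2-4g+2)}{g^2-3g+1}(b-1)$ from Proposition~\ref{Prop: slope inequality small genus}; the extra $(b-1)$ terms there (with $g$ fixed or bounded below) are handled the same way, using $b-1\ge (g-1)(b-1)/(g-1)$ and the lower bound on $(g-1)(b-1)$, and the arithmetic delivers $\tfrac{840}{47}$ and $\tfrac{12(g-1)(3g^2-4g-4)}{g(3g^2-12g+15)}$ respectively.

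The main obstacle, as flagged above, is not the slope inequalities themselves (those are quoted) nor the final algebra (mechanical), but making precise and rigorous the passage from ``$c_2(S)<0$'' to a clean lower bound on $(g-1)(b-1)$ — equivalently, controlling the irregularity and the torsion term $l(f)$ so that the Shepherd-Barron fibration's invariants can be squeezed between the slope bound and the Albanese constraint. I would isolate this as a preliminary lemma (``if $f$ is the Albanese fibration then $\chi(\sO_S)\le (g-1)(b-1)+l(f)$ and $\chi_f\ge\max(1,\ldots)$''), prove it using $f_*\sO_S=\sO_C$, the Leray spectral sequence computing $q(S)$, and Noether's formula rewritten via (\ref{formula: Noether}), and then the theorem falls out case by case. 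A secondary nuisance is verifying that for the Albanese fibration the generic fibre really is hyperelliptic whenever we are in the ``hyperelliptic'' alternative and that $K_S$ nef suffices to invoke Proposition~\ref{Prop: slope inequality small genus}; both are short.
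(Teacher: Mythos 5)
There is a genuine gap at exactly the point you flagged as the main obstacle, and the auxiliary lemma you propose to fill it is false in the situation at hand. You correctly observe that substituting $K_{S/C}^2=K_S^2-8(g-1)(b-1)$ and $\chi_f=\chi(\sO_S)-(g-1)(b-1)+l(f)$ into a slope inequality $K_{S/C}^2\ge\lambda\chi_f$ gives a \emph{lower} bound on $K_S^2$, so a second input is needed. But your candidate second input --- ``$\chi_f\ge 1$ (or $\chi_f\ge 0$)'' coming from ``semipositivity-type inputs'', equivalently $\chi(\sO_S)\le(g-1)(b-1)+l(f)$ --- cannot work: by Theorem~\ref{Thm: SB} the geometric generic fibre of the Albanese fibration is a singular rational curve, and precisely in that case semipositivity of $f_*\omega_{S/C}$ fails and $\chi_f$ is typically negative (in Raynaud's examples $\chi_f=-\tfrac{(p-1)^2(b-1)}{8p}<0$, with $l(f)=0$). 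So the lemma you would ``isolate and prove using the Leray spectral sequence'' is not true, and the elimination you describe cannot be carried out from it.

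The input the paper actually uses is the inequality $c_2(S)\ge -4(b-1)$ (inequality (3.3) of \cite{Gu}, a consequence of the Grothendieck--Ogg--Shafarevich-type analysis of the fibres of a fibration whose geometric generic fibre is a cuspidal rational curve), and it is used \emph{twice}: first, combined with Noether's formula $12\chi(\sO_S)=K_S^2+c_2(S)$, $l(f)\ge0$, and the slope inequality, it converts the ``wrong-direction'' bound into a lower bound of the form $K_S^2\ge C(g)\,(b-1)$ (for the hyperelliptic case $C(g)=\tfrac{(12g+8)(g-1)}{2g+1}$); second, it gives
$\tfrac{12\chi(\sO_S)}{K_S^2}=1+\tfrac{c_2(S)}{K_S^2}\ge 1-\tfrac{4(b-1)}{K_S^2}\ge 1-\tfrac{4}{C(g)}$,
which is the desired upper bound on $K_S^2/\chi(\sO_S)$. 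Your sketch never produces a lower bound on $c_2(S)$, and without one the quantity $1+c_2(S)/K_S^2$ cannot be bounded below, so the constants $\tfrac{(12g+8)(g-1)}{g^2-g-1}$, $18$, $\tfrac{840}{47}$, etc.\ are not reachable by the route you describe. The rest of your architecture (which slope inequality to use in which case, nefness of $K_S$ for Proposition~\ref{Prop: slope inequality small genus}, the case division by the type of the generic fibre) matches the paper and is fine.
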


\begin{proof} We first recall the following numerical relations (\ref{Eq: K_f}) and (\ref{Eq: chi_f}):
$$\aligned K_{S/C}^2&=K_S^2-8(g-1)(b-1)\\
\chi_f&=\chi(\sO_S)-(g-1)(b-1)+l(f).\endaligned$$ Then if $f:S\to C$ is hyperelliptic, by Theorem~\ref{Thm: Xiao's inequality}, one has
\begin{align*}
K^2_S-8(g-1)(b-1)&\ge \dfrac{4g-4}{g}(\chi(\sO_S)-(g-1)(b-1)+l(f)) \\
&\ge \dfrac{4g-4}{g}(\frac{1}{12}(K^2_S+c_2(S))-(g-1)(b-1))\\
&=\frac{(g-1)}{3g}K_S^2+\dfrac{(g-1)}{3g}c_2(S)+\dfrac{4(g-1)^2(b-1)}{g}.
\end{align*}
By the inequality (see \cite[(3.3)]{Gu})
\begin{equation}\label{IEQ: for c_2}
c_2(S)\ge -4(b-1),
\end{equation} one has
$$ K^2_S\ge \frac{(12g+8)(g-1)}{2g+1}(b-1)\quad \text{or}\quad  \frac{4(b-1)}{K^2_S}\le \frac{2g+1}{(g-1)(3g+2)},$$
which implies the first inequality in Theorem~\ref{Thm: M_Y type}:
\begin{align*}
\frac{12\chi(\sO_S)}{K_S^2}&=1+\frac{c_2(S)}{K_S^2}\ge 1-\frac{4(b-1)}{K^2_S}\\
&\ge 1-\frac{2g+1}{(g-1)(3g+2)}=\frac{3(g^2-g-1)}{(3g+2)(g-1)}.
\end{align*}

Other inequalities follow the same computations. In fact, if we have a slope inequality $K^2_{S/C}\ge \varphi(g)\chi_f-\phi(g)(b-1)$ with $\varphi(g)<12$, we get
\begin{align*}
 & K_S^2-8(g-1)(b-1)\\
\ge& \varphi(g)(\dfrac{K_S^2+c_2(S)}{12}-(g-1)(b-1)+l(f))-\phi(g)(b-1) \\
                 \ge& \varphi(g)(\dfrac{K_S^2-4(b-1)}{12}-(g-1)(b-1))-\phi(g)(b-1).
\end{align*}
Thus
$$K_S^2\ge \dfrac{12(8-\varphi(g))(g-1)-4\varphi(g)-12\phi(g)}{12-\varphi(g)}(b-1).$$
So we have
\begin{align*}
\dfrac{12\chi(\sO_S)}{K_S^2} = 1+\dfrac{c_2(S)}{K_S^2}\ge 1-\dfrac{4(12-\varphi(g))}{ 12(8-\varphi(g))(g-1)-4\varphi(g)-12\phi(g)}\\
\end{align*}

Now by  Proposition~\ref{Prop:g=3} and Proposition~\ref{Prop: slope inequality small genus}, we can take
\begin{itemize}
\item $\varphi(3)=3, \phi(3)=0$, if $g=3$ and $f$ is not hyperelliptic;

\item $\varphi(4)=\dfrac{15}{7}, \phi(4)=\dfrac{48}{7}$, if $g=4$;

\item $\varphi(g)=\dfrac{2(g-1)(g-2)}{g^2-3g+1}, \phi(g)=\dfrac{4(g-1)(g^2-4g+2)}{g^2-3g+1}$, if $g\ge  5$.
\end{itemize}
Our theorem then follows from a simple calculation. Note that from the computation, when $f$ has genus $3$, then $K_S^2\le \dfrac{88}{5}\chi(\sO_S)<18\chi(\sO_S)$ if $f$ is hyperelliptic and $K_S^2\le 18\chi(\sO_S)$ if $f$ is non-hyperelliptic.
\end{proof}
\begin{rmk}\label{rmk to M_Y type}
1). From the proof, the inequalities in the theorem takes equality if and only if $c_2(S)=-4(b-1), l(f)=0$ and the associated slope inequality takes equality.

2). In \S~\ref{S: Raynaud's example} below, we will see that Raynaud's examples meets the equality for hyperelliptic fibrations in this theorem.

3). By Tate's genus change formula (cf. \cite{Tate} or \cite[\S~2.1]{Gu}), the genus $g$ is such that $(p-1)\mid 2g$.
\end{rmk}

\begin{coll}\label{coro 1}
Let $S$ be a minimal smooth projective surface of general type. Then
$K^2_S\le 32\chi(\sO_S)$.
Moreover, when $$18\chi(\sO_S)<K^2_S\le 32\chi(\sO_S),$$ the Albanese fibration of $S$ must be a genus two fibration.
\end{coll}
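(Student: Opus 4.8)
The plan is to feed the four slope-type bounds of Theorem~\ref{Thm: M_Y type} into an elementary optimization over the fibre genus $g$. First I would dispose of the case $c_2(S)\ge 0$: Noether's formula~(\ref{formula: Noether}) gives at once $K_S^2\le 12\chi(\sO_S)\le 32\chi(\sO_S)$, and in this range $K_S^2\le 18\chi(\sO_S)$, so the second assertion is vacuous. Hence I may assume $c_2(S)<0$, in which case Theorem~\ref{Thm: SB} provides the Albanese fibration $f:S\to C$ with fibre arithmetic genus $g\ge 2$ and $b=g(C)\ge 2$, and Theorem~\ref{Thm: M_Y type} applies.

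The core is then a case analysis on $g$ using the bound from Theorem~\ref{Thm: M_Y type}. For $g=2$ the fibration is automatically hyperelliptic (every genus-two curve is), so the hyperelliptic bound $K_S^2\le \frac{(12g+8)(g-1)}{g^2-g-1}\chi(\sO_S)$ applies; substituting $g=2$ gives $K_S^2\le \frac{32}{1}\chi(\sO_S)=32\chi(\sO_S)$, which is where the constant $32$ comes from and which is attained (Raynaud's examples, Remark~\ref{rmk to M_Y type}). For $g\ge 3$ I would check that each of the remaining coefficients is $\le 18$: the hyperelliptic coefficient $\frac{(12g+8)(g-1)}{g^2-g-1}$ is decreasing in $g$ and equals $\frac{88}{5}<18$ at $g=3$ (this is already noted at the end of the proof of Theorem~\ref{Thm: M_Y type}); the genus-three coefficient is exactly $18$; the genus-four coefficient is $\frac{840}{47}<18$; and for $g\ge 5$ one checks $\frac{12(g-1)(3g^2-4g-4)}{g(3g^2-12g+15)}<18$, equivalently $12(g-1)(3g^2-4g-4)<18g(3g^2-12g+15)$, which after expansion reduces to a polynomial inequality in $g$ that holds for all $g\ge 5$ (indeed the left side is $36g^3-\cdots$ versus $54g^3-\cdots$, so the cubic terms alone dominate for large $g$, and a direct check settles the finitely many small cases). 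Consequently $K_S^2\le 18\chi(\sO_S)$ for every $g\ge 3$, and $K_S^2\le 32\chi(\sO_S)$ in all cases. Moreover, if $K_S^2>18\chi(\sO_S)$ then necessarily $c_2(S)<0$ and $g=2$, giving the second assertion.

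The only mildly delicate point is the very last step of the $g\ge 5$ estimate, where one must confirm that the rational function $\frac{12(g-1)(3g^2-4g-4)}{g(3g^2-12g+15)}$ stays below $18$ uniformly; since $3g^2-12g+15=3(g-2)^2+3>0$ for all $g$, the denominator is positive and one may clear it and verify the resulting cubic inequality $18g^3-132g^2+\cdots>0$ directly. I expect this routine polynomial bookkeeping — rather than any conceptual issue — to be the main thing to get right, together with making sure the value $18$ is genuinely attained only in the non-hyperelliptic genus-three case (as exhibited by the example in~\S\ref{S: counterexample to Xiao}), so that the dichotomy in the statement is sharp.
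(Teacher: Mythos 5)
Your proposal is correct and follows essentially the same route as the paper: dispose of $c_2(S)\ge 0$ via Noether's formula, then for $c_2(S)<0$ apply Theorem~\ref{Thm: M_Y type} case by case, observing that the hyperelliptic coefficient $h(g)$ is decreasing with $h(2)=32$ and $h(3)=\frac{88}{5}<18$, and that all non-hyperelliptic coefficients for $g\ge 3$ are at most $18$ (the paper simply notes $n(g)$ is decreasing with $n(5)=\frac{408}{25}<18$ rather than clearing denominators as you suggest, but that is a cosmetic difference). Your explicit remark that every genus-two fibration is hyperelliptic, which the paper leaves implicit, is a welcome clarification.
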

\begin{proof}
If $c_2(S)\geq 0$, Noether's formula implies $K_S^2\le 12\chi(\sO_S)$. It is enough to consider that $c_2(S)<0$, then we have Albanese fibration
$$f:S\to C$$
of genus $g\ge 2$. If $f:S\to C$ is hyperelliptic, by Theorem~\ref{Thm: M_Y type}, we have
$$K_S^2\le \frac{(12g+8)(g-1)}{g^2-g-1}\chi(\sO_S)$$
where $h(g)=\dfrac{(12g+8)(g-1)}{g^2-g-1}$ is an decreasing function of $g$ with $h(2)=32$.
Thus $K^2_S\le 32\chi(\sO_S)$. If $f:S\to C$ is non-hyperelliptic, we have $K_S^2\le 18\chi(\sO_S)$ for $g=3,4$ immediately from Theorem~\ref{Thm: M_Y type} and
$$  K_S^2\le \dfrac{12(g-1)(3g^2-4g-4)}{g(3g^2-12g+15)}\chi(\sO_S), \quad g\ge 5$$
where $n(g)=\dfrac{12(g-1)(3g^2-4g-4)}{g(3g^2-12g+15)}$ is also an decreasing function of $g$ with $n(5)=\dfrac{408}{25}<18$.
Thus $K^2_S\le 18\chi(\sO_S)$ when $g\ge 3$.  Altogether, we have
$$K^2_S\le 32\chi(\sO_S)$$
for all minimal smooth projective surfaces $S$ of general type and when $K^2_S>18\chi(\sO_S)$,
$f:S\to C$  must be a genus two fibration.
\end{proof}
In the next section, we shall construct the following examples:
\begin{enumerate}
\item examples of $S$ with $K_S^2=32\chi(\sO_S)$ (cf. \S~\ref{S: example of maximal slope}) ;

\item example of $S$ with $K_S^2=18\chi(\sO_S)$ but its Albanese fibration is of genus $3$ (cf. Proposition~\ref{exm: slope 18}).
\end{enumerate} So the bounds in Corollary~\ref{coro 1} is optimum.

To end this section, it is worthing to mention that Theorem \ref{Thm: M_Y type} implies Gu's conjecture for the ``hyperelliptic part'' (see Conjecture 1.4 of \cite{Gu}).
\begin{coll}\label{cor: hyperelliptic} Let $S$ be a minimal algebraic surface of general type in positive characteristic $p\geq 5$. Assume that $c_2(S)<0$ and the Albanese morphism $f:S\rightarrow C$ has generic hyperelliptic fibre, then we have
\begin{equation}\label{equ: Gu conjecture}\chi(\sO_S)\ge\dfrac{p^2-4p-1}{4(3p+1)(p-3)}K_S^2\end{equation}
and the equality holds exactly for Raynaud's example (see \S~\ref{S: Raynaud's example}).
\end{coll}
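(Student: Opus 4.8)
The plan is to read the inequality off directly from the hyperelliptic case of Theorem~\ref{Thm: M_Y type}, once the fibre genus $g$ has been pinned down by an arithmetic constraint, and then to treat the equality case by unwinding all the intermediate equalities.

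First I would invoke Theorem~\ref{Thm: SB}: since $c_2(S)<0$, the Albanese fibration $f\colon S\to C$ has fibre of arithmetic genus $g\ge 2$ and geometric generic fibre a cuspidal rational curve, hence of geometric genus $0$. By Remark~\ref{rmk to M_Y type}(3) (Tate's genus change formula) this forces $(p-1)\mid 2g$, so together with $g\ge 2$ and $p\ge 5$ we conclude $g\ge\frac{p-1}{2}$. Next, as the generic fibre is hyperelliptic, Theorem~\ref{Thm: M_Y type} gives $K_S^2\le h(g)\,\chi(\sO_S)$ with $h(g)=\frac{(12g+8)(g-1)}{g^2-g-1}$ (and $h(g)>0$ for $g\ge 2$, $K_S^2>0$ since $S$ is of general type). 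The function $h$ is decreasing in $g$, as already recorded in the proof of Corollary~\ref{coro 1}, so $g\ge\frac{p-1}{2}$ gives $h(g)\le h\!\left(\frac{p-1}{2}\right)$; a direct substitution (using $12g+8=2(3p+1)$, $g-1=\frac{p-3}{2}$, and $g^2-g-1=\frac{p^2-4p-1}{4}>0$ at $g=\frac{p-1}{2}$) yields $h\!\left(\frac{p-1}{2}\right)=\frac{4(3p+1)(p-3)}{p^2-4p-1}$. Hence
$$\chi(\sO_S)\ \ge\ \frac{1}{h(g)}\,K_S^2\ \ge\ \frac{1}{h\!\left(\frac{p-1}{2}\right)}\,K_S^2\ =\ \frac{p^2-4p-1}{4(3p+1)(p-3)}\,K_S^2,$$
which is the asserted inequality \eqref{equ: Gu conjecture}.

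For the equality statement, equality in \eqref{equ: Gu conjecture} forces equality at every step, so $g=\frac{p-1}{2}$ and equality in the hyperelliptic case of Theorem~\ref{Thm: M_Y type}; by Remark~\ref{rmk to M_Y type}(1) the latter means $c_2(S)=-4(b-1)$, $l(f)=0$, and that Xiao's inequality $K_{S/C}^2=\frac{4g-4}{g}\chi_f$ is an equality, while Remark~\ref{rmk to M_Y type}(2) says Raynaud's surfaces do meet all of these. The remaining part, which I expect to be the main obstacle, is the converse: that an $S$ attaining all these equalities is one of Raynaud's examples. My plan here is to work relatively over $C$ and track the equality in Lemma~\ref{Lem3.5} together with the equality in each Clifford-type bound along the Harder--Narasimhan filtration of $f_*\omega_{S/C}$; this should force the filtration to be as rigid as possible and the relative canonical map to realise $S$, after contracting its finitely many exceptional points, as a flat double cover of a ruled surface over $C$, whereupon $c_2(S)=-4(b-1)$ and $l(f)=0$ should rigidify the branch data and show it is pulled back from $C$ along a power of Frobenius, matching Raynaud's construction in \S~\ref{S: Raynaud's example}. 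Establishing this last identification precisely --- that the extremal double-cover data must be exactly Raynaud's rather than merely resembling it --- is the delicate point where most of the effort will go.
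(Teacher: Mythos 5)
Your derivation of the inequality itself is exactly the paper's argument: $g\ge\frac{p-1}{2}$ from Tate's genus change formula, monotonicity of $h(g)=\frac{(12g+8)(g-1)}{g^2-g-1}$, and the evaluation $h\left(\frac{p-1}{2}\right)=\frac{4(3p+1)(p-3)}{p^2-4p-1}$. That half is complete and correct, as is the forward direction of the equality statement (Raynaud's surfaces attain equality, by the numerical data recorded in \S~\ref{S: Raynaud's example}).

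The converse of the equality case is where you have a genuine gap: you extract the right necessary conditions ($g=\frac{p-1}{2}$, $c_2(S)=-4(b-1)$, $l(f)=0$, Xiao equality) but then only sketch a plan, and the plan points in a harder direction than is needed. The paper never tracks equality through Lemma~\ref{Lem3.5} or through the Clifford-type bounds along the Harder--Narasimhan filtration. It argues purely fibre-geometrically: $c_2(S)=-4(b-1)$ forces every geometric fibre of $f$ to be irreducible; since $g=\frac{p-1}{2}$, the non-smooth locus of $f$ contains an integral horizontal divisor $\Delta$ with $[\Delta:C]=p$, which excludes multiple fibres, so every geometric fibre is in fact integral; and then the stand-alone characterization Lemma~\ref{Lem: Characterization of Raynaud's surfaces} --- a fibration all of whose geometric fibres are integral, hyperelliptic, singular rational curves of arithmetic genus $\frac{p-1}{2}$ is one of Raynaud's examples, proved by passing to the quotient by the hyperelliptic involution and counting the degree of the branch divisor over $K(C)$ --- finishes the identification. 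The missing ingredient in your proposal is precisely this characterization lemma: it converts ``extremal slope'' into checkable conditions on the fibres, so no analysis of the extremal Harder--Narasimhan data or of when the Clifford bounds are equalities is required, and the ``delicate point where most of the effort will go'' in your sketch is exactly the part your route would be unable to carry out without reproving that lemma.
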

\begin{proof}
Since we always have $g\geq \dfrac{p-1}{2}$ by genus change formula (cf. \cite{Tate} or \cite[\S~2.1]{Gu}), then (\ref{equ: Gu conjecture}) is a direct consequence of Theorem \ref{Thm: M_Y type} by that $h(g)=\dfrac{(12g+8)(g-1)}{g^2-g-1}$ is an decrease function of $g$ with $$h(\dfrac{p-1}{2})=\dfrac{4(3p+1)(p-3)}{p^2-4p-1}.$$

If $S$ is one of Raynaud's example, the equality of (\ref{equ: Gu conjecture}) holds according to a direct computation (see \S~\ref{S: Raynaud's example}). Conversely, the equality of (\ref{equ: Gu conjecture}) holds only if $g=\dfrac{p-1}{2}$ by above statement, $c_2(S)=-4(b-1), l(f)=0$ and $K^2_{S/C}=\dfrac{4g-4}{g}\chi_f$ by Remark \ref{rmk to M_Y type}. The equality $c_2(S)=-4(b-1)$ holds only if all geometric fibre of $f$ is irreducible. Moreover, since when $g=\dfrac{p-1}{2}$, there is an integral horizontal divisor $\Delta$ contained in the non-smooth locus of $f$ such that $[\Delta: C]=p$. So there can be no multiple fibres in $f$. Namely each geometric fibre of $f$ is irreducible and reduced. And our result is a direct consequence of Lemma \ref{Lem: Characterization of Raynaud's surfaces} below.
\end{proof}

\section{Examples}\label{S: ex}
\subsection{Raynaud's examples}\label{S: Raynaud's example}
In the paper \cite{R}, Raynaud constructed a class of pairs $(S, \sL)$, where $S$ is a smooth projective algebraic surface in positive characteristic and $\sL$ is an ample line bundle on $S$ such that $H^1(S,\sL)\neq 0$.  These pairs then give counterexamples to Kodaira's vanishing theorem in positive characteristics. In fact, Raynaud's examples do not only violate  Kodaira's vanishing theorem, but also lead to many other pathologies in positive characteristic.

We now briefly recall his construction, one can also refer to \cite{R} or \cite[\S~4]{Gu}. Let us start with a smooth projective curve $C$ of genus $b:=g(C)\ge 2$ over an algebraically closed field $\mathbf{k}$ of characteristic $p>2$ equipped with a rational function $f\in K(C) \backslash K(C)^p$ such that $$\mathrm{div} (\mathrm{d} f)=pD$$ for some divisor $D$ on $C$. We have the following examples of $C$ known as a special case of the Artin-Schreier curves.
\begin{ex}[Artin-Schreier curves]
Let $C$ be the projective normal curve associated to the following plane equation:
$$y^p-y=\varphi(x), \varphi(x)\in \sk[x].$$
Then $\mathrm{div}(\mathrm{d}x)=(2b-2) \infty$, where $\infty \in C$ is the unique point at infinity. For a suitable choice of $\varphi(x)$  ({\it e.g., $\varphi(x)=x^{p+1}$}), the genus $b=g(C)$ can be such that $p\mid 2b-2>0$. And therefore $\mathrm{div}(\mathrm{d}x)=p\cdot D$ for $D=\dfrac{(2b-2)}{p}\infty$.
\end{ex}
Starting from $(C,f)$, Raynaud shows there is a suitable rank $2$ vector bundle $E$ on $C$ along with a  non-singular effective divisor $\Sigma$ on
$$\pi: P=\mathbb{P}(E)\to C$$ such that
\begin{itemize}
\item $\mathrm{det}(E)\simeq \sO_C(D)$;

\item $\Sigma\subsetneq P$ is a non-singular divisor consisting of two irreducible components $\Sigma_i, i=1,2$ such that
\begin{itemize}
\item  $\Sigma_1$ is a section of $\pi$ and $\Sigma_1\in \sO_P(1)$;

\item  $\pi: \Sigma_2\to C$ is inseparable (and hence isomorphic to the Frobenius morphism);

\item  $\Sigma_1\cap \Sigma_2=\emptyset$.
\end{itemize}
\end{itemize}
Moreover, all such configurations $(P(E),\Sigma)$ is coming from his construction by a suitable choice of $(C,f)$. And we actually have
\begin{itemize}
\item $\Sigma_2\in |\sO_P(p)\otimes \pi^*\omega^{-1}_{C/\sk}|.$
\end{itemize} In particular, the divisor $\Sigma$ is an even divisor on $P$, so we can construct a flat double cover $\sigma: S\to P$ with branch divisor $\Sigma$ by choosing any line bundle $\sM$ on $P$ with $\sM^2\simeq \sO_P(\Sigma)$. The obtained surface $S$ is smooth over $\sk$ since $\Sigma$ is so (see \cite[\S~2]{Gu}).
\begin{defn}[Raynaud's example]
Let $S$ be a smooth projective surface over $\mathbf{k}$. We call that $S$ is one of Raynaud's example if there is a flat double cover $\sigma:S\to P$   with branch divisor $\Sigma$.
$$
\xymatrix{S\ar[rr]^\sigma \ar[rd]_f&& P\ar[ld]^\pi\\
&C&}
$$
\end{defn} Note that by construction, the fibration $f: S\to C$ in Raynaud's construction is hyperelliptic.
Let $f:S\to C$ be one of Raynaud's examples associated to the triple $(C,P,\Sigma)$, then we have:
\begin{itemize}
\item  the fibre genus $g$ is such that $2g-2=p-3$;

\item $K_S^2=\dfrac{(3p^2-8p-3)(b-1)}{p}$;

\item $\chi(\sO_S)=\dfrac{(p^2-4p-1)(b-1)}{8p}$.
\end{itemize}
Thus
\begin{align}K_{S/C}^2&=K_S^2-8(g-1)(b-1)=- (p-1)(p-3)(b-1) \\
\chi_f&=\chi(\sO_S)-(g-1)(b-1)=-\dfrac{(p-1)^2(b-1)}{8p}.
\end{align}
In particular, we have equality in Xiao's inequality (see Theorem~\ref{Thm: main-slope}) $$K_{S/C}^2=\dfrac{4g-4}{g}\chi_f$$ while both sides of the equality are negative. Raynaud's example is such that $K_S^2=\dfrac{4(3p+1)(p-3)}{p^2-4p-1}\chi(\sO_S)$, which is the maximal possible slope for ``hyperelliptic part'' (see Corollary \ref{cor: hyperelliptic}).

We end this subsection by a characterization of Raynaud's example.
\begin{lem}\label{Lem: Characterization of Raynaud's surfaces}
Suppose $f: S\to C$ is a surface fibration. Then $S$ is one of Raynaud's example if and only if  $f$ satisfies:
\begin{enumerate}
\item [a)] every geometric fibre of $f$ is a singular rational curve of arithmetic genus $$g=\dfrac{p-1}{2};$$

\item [b)] every geometric fibre is hyperelliptic and integral.
\end{enumerate}
\end{lem}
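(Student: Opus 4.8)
The plan is to treat the two implications separately: the forward one is a direct computation inside Raynaud's construction, and the converse is a reconstruction of the pair $(P,\Sigma)$ out of the family $f$, using the relative hyperelliptic structure and Raynaud's classification of admissible configurations.

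\smallskip

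\textbf{The forward implication.} Suppose $S$ is one of Raynaud's examples, so $f$ factors as $S\xrightarrow{\sigma}P=\mathbb{P}(E)\xrightarrow{\pi}C$ with $\sigma$ a flat double cover branched over $\Sigma=\Sigma_1\cup\Sigma_2$. For every geometric point $c\in C$ one has $\pi^{-1}(c)\cong\mathbb{P}^1$, and the branch divisor of $F_c:=f^{-1}(c)\to\pi^{-1}(c)$ is $\Sigma_1|_{\pi^{-1}(c)}+\Sigma_2|_{\pi^{-1}(c)}$; the first term is a single reduced point since $\Sigma_1$ is a section, the second is $p$ times a point since $\pi|_{\Sigma_2}$ is purely inseparable of degree $p$, and the two are disjoint because $\Sigma_1\cap\Sigma_2=\varnothing$. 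Hence $F_c$ is the double cover of $\mathbb{P}^1$ branched along a divisor of type $Q_1+pQ_2$ ($p$ odd); it is smooth away from the point over $Q_2$, where it has local equation $z^2=x^p$, a unibranch singularity with $\delta=(p-1)/2$. It follows immediately that $F_c$ is integral and hyperelliptic (via $\sigma$), that its normalisation has genus $0$ so it is a rational curve, that $p_a(F_c)=\delta=(p-1)/2$, and that $F_c$ is singular. This proves (a) and (b).

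\smallskip

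\textbf{The converse: building $(P,\Sigma)$.} Assume (a) and (b), and first take $p\geq 5$, so $g=(p-1)/2\geq 2$ (the case $p=3$, $g=1$ is discussed at the end). Since every geometric fibre is integral there are no $(-1)$-curves in fibres, so $f$ is relatively minimal. The geometric generic fibre $S_{\bar\eta}$ is hyperelliptic of genus $\geq 2$, so its hyperelliptic involution is unique, hence Galois-stable, and descends to an involution $\iota_\eta$ of $S_\eta$ over $K:=K(C)$; the quotient $S_\eta/\iota_\eta$ is a genus-$0$ curve over $K$, hence $\cong\mathbb{P}^1_K$ by Tsen's theorem, so $S_\eta$ is a double cover of $\mathbb{P}^1_K$ over $K$. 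By relative minimality $\iota_\eta$ extends to an involution $\iota$ of $S$ over $C$; put $P:=S/\iota$, let $\sigma:S\to P$ be the quotient (finite flat of degree $2$ since $\operatorname{char}\sk\neq 2$ and $S$ is normal) and $\pi:P\to C$ the induced map. For each $c$, $\sigma(F_c)$ is irreducible so $\pi^{-1}(c)$ is irreducible, and since $f^*(c)=F_c$ is reduced and equals $\sigma^*\pi^{-1}(c)$, writing $\pi^{-1}(c)=m\Gamma$ with $\Gamma$ integral forces $m=1$; as the arithmetic genus of the fibre is constant and equal to $p_a(\mathbb{P}^1_K)=0$, we get $\pi^{-1}(c)\cong\mathbb{P}^1$. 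Thus $\pi$ is a $\mathbb{P}^1$-bundle, $P=\mathbb{P}(E)$ for a rank-$2$ bundle $E$ on $C$ and $P$ is smooth; since $S$ is smooth and $\operatorname{char}\sk\neq 2$, the branch divisor $\Sigma:=\sigma(\operatorname{Fix}(\iota))\subset P$ is a smooth, in particular reduced, divisor.

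\smallskip

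\textbf{The converse: identifying $\Sigma$.} On each fibre $P_c\cong\mathbb{P}^1$ the divisor $\Sigma|_{P_c}=\sum_j m_jQ_j$ is the branch divisor of $F_c\to P_c$, so $\sum_j m_j=2g+2=p+1$, while the $\delta$-invariant of $F_c$, which equals $p_a(F_c)=g$ because $F_c$ is rational, is $\sum_j\lfloor m_j/2\rfloor$; comparing these forces exactly two of the $m_j$ to be odd. Writing $\Sigma=\bigsqcup_i\Sigma_i$ with $\Sigma_i$ the (smooth) connected components, for a general $c$ the number of odd-multiplicity points of $\Sigma|_{P_c}$ is $\sum_i\operatorname{sep.deg}(\Sigma_i/C)$; hence $\sum_i\operatorname{sep.deg}(\Sigma_i/C)=2$, so there are exactly two components $\Sigma_1,\Sigma_2$, each of separable degree $1$ over $C$, i.e. each $\Sigma_i\to C$ is purely inseparable of degree $p^{e_i}$. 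From $p^{e_1}+p^{e_2}=p+1$ one gets $\{e_1,e_2\}=\{0,1\}$, so $\Sigma_1\to C$ is an isomorphism (a section of $\pi$) and $\Sigma_2\to C$ is purely inseparable of degree $p$, hence isomorphic to the Frobenius of $C$; moreover $\Sigma_1\cap\Sigma_2=\varnothing$, as otherwise $\Sigma$, and hence $S$, would be singular. After twisting $E$ by a line bundle we may assume $\Sigma_1\in|\mathcal{O}_P(1)|$, so $(\mathbb{P}(E),\Sigma)$ is precisely one of Raynaud's configurations, and therefore $S\xrightarrow{\sigma}P$ exhibits $S$ as one of Raynaud's examples. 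For $p=3$, $g=1$, the fibres are cuspidal cubics and the hyperelliptic involution is no longer unique fibrewise, so one must instead spread out the relative double cover $S_\eta\to\mathbb{P}^1_K$ directly and run the same analysis of its branch divisor.

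\smallskip

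\textbf{Main obstacle.} The genus-versus-multiplicity bookkeeping and the final prime-power computation are routine; the real work is in controlling the relative hyperelliptic structure \emph{over all of $C$}: extending the involution from the generic fibre to a regular involution of $S$ (for which relative minimality, guaranteed here by the integrality of all fibres, is essential) and then verifying that $P=S/\iota\to C$ is a genuine $\mathbb{P}^1$-bundle with $\Sigma$ a smooth divisor meeting every fibre in a divisor of the expected shape, rather than merely generically. The low-genus value $p=3$ is a minor additional nuisance for the same reason.
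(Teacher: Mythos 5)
Your proof is correct and follows essentially the same route as the paper's: the forward direction by direct computation, and the converse by quotienting by the hyperelliptic involution, using Tsen's theorem and the integrality of all fibres to identify $S/\iota$ as a minimal ruled surface, and then degree-counting the branch divisor to recover Raynaud's configuration (the paper outsources the branch-divisor bookkeeping to \cite[\S~2.1--2.2]{Gu}, which you instead carry out explicitly). The only loose end is that $\sum_i\operatorname{sep.deg}(\Sigma_i/C)=2$ could a priori also come from a single component of separable degree $2$, but this is excluded by the same count since $2p^{e}=p+1$ has no solution.
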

\begin{proof}
The ``only if" part can be checked directly. Conversely,
let $\rho$ be the hyperelliptic involution, and $\sigma: S\to P':=S/\rho$ be the quotient map. Then condition b) implies that the canonical homomorphism $\pi:P'\to C$ has integral fibres. Note that $\pi:P\to C$ is birational to a ruled surface (recall that $K(C)$ is C1 by Tsen's Theorem) and $P'$ is normal with integral $\pi$-fibres, we see that $P'$ is exactly a smooth minimal ruled surface over $C$. Thus the quotient map $\sigma: S\to P'$ is a flat double cover with some branch divisor $\Sigma'\subsetneq  P'$, and $\Sigma'$ itself is smooth over $\mathbf{k}$ (see \cite[\S~2.2]{Gu}). on the other hand, it can be deduced from \cite[\S~2.1 \& 2.2]{Gu} that $\Sigma_{K(C)}:=\Sigma'\times_{\pi, C} K(C)$ is a divisor of $P'_{K(C)}:=P'\times_{\pi, C} K(C)\simeq \mathbb{P}_{K(C)}^1$ with \begin{itemize}
\item $\deg_{K(C)}\Sigma_{K(C)}=p+1$ (by $p_a=\dfrac{p-1}{2}$);
\item $\Sigma_{K(C)}$ contains a point inseparable over $K(C)$ (by the fact all fibres are singular).
\end{itemize}
By degree counting, it concludes that $(P',\Sigma')$ meets the configuration given by Raynaud. We are done.
\end{proof}

\subsection{Counterexample to Xiao's slope inequalities}\label{S: counterexample to Xiao}
Starting from the triple pair $(C,P=\mathbb{P}(\sE),\Sigma)$ constructed in the previous subsection, we can also take cyclic cover of $P$ branching at $\Sigma$ of higher degrees, which then give counterexamples to Xiao's slope inequality.

After an \'etale base change if necessary, we assume now $p+1\mid 2b-2$ and fix a line bundle $\sM$ on $C$ such that $\sM^{p+1}\simeq \omega_{C/\sk}$. Now as
$$\Sigma\in |\sO_P(p+1)\otimes \pi^*\omega^{-1}_{C/\sk}|=|(\sO_P(1)\otimes \pi^*\sM^{-1})^p|.$$ This data then gives a cyclic $(p+1)$ cover $\tau: S\to P$ branching at $\Sigma$. Since $\Sigma$ is a smooth divisor, $S$ is smooth over $\sk$. Denote by $f=\tau\circ \pi: S\to C$ the associated surface fibration. Then this fibration $f$ has the following properties:
\begin{itemize}
\item $S$ is a minimal surface of general type;

\item every closed fibre of $f$ is a singular rational curve of arithmetic genus  $\dfrac{p^2-p}{2}$;

\item $K^2_S=\tau^*(K_P+pc_1(\sO_P(1))+pf^* c_1(\sM))=(p+3)(p-2)(2b-2)$;

\item $\chi(\sO_S)=\sum\limits_{j=0}^p \chi(\sO(-j)\otimes \pi^*\sM^{-j})=\dfrac{p^2+p-8}{12}(2b-2)$;

\item $K_{S/C}^2=-(p^2-3p+2)(2b-2)<0$;

\item $\chi_f=-\dfrac{p^2-2p+1}{6}(2b-2)<0$.
\end{itemize}
So $$\dfrac{K_{X/C}^2}{\chi_f}=\dfrac{6(p^2-3p+2)}{p^2-2p+1}>\dfrac{4(p-2)(p+1)}{p(p-1)}=\dfrac{4g-4}{g},$$ but since $\chi_f<0$, this violates Xiao's slope inequality.

When $p=3$, we have $g=3$ and  $K_S^2=18\chi(\sO_S)$. Note that $f$ is clearly the Albanese fibration of $S$, and we have the next proposition.
\begin{prop}\label{exm: slope 18}
There is a surface $S$ of general type in characteristic $3$ with a genus $3$ Albanese fibration such that $K_S^2=18\chi(\sO_S)$.
\end{prop}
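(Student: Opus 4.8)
The plan is to carry out, for $p=3$, the cyclic $(p+1)$-cover construction set up just above, and then to check the three assertions of the proposition one at a time: the identity $K_S^{2}=18\chi(\sO_S)$, the value $g=3$ of the fibre genus, and the fact that $f$ is the Albanese fibration. First I would fix a Raynaud triple $(C,P=\mathbb{P}(\sE),\Sigma)$ over an algebraically closed field $\sk$ of characteristic $3$; after replacing $C$ by a suitable finite \'etale cover I may assume $p(p+1)=12$ divides $2b-2$, so that there is a line bundle $\sM$ on $C$ with $\sM^{\otimes(p+1)}\cong\omega_{C/\sk}$ and $\sO_P(\Sigma)\cong\bigl(\sO_P(1)\otimes\pi^{*}\sM^{-1}\bigr)^{\otimes(p+1)}$. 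Forming the associated degree-$(p+1)=4$ cyclic cover $\tau\colon S\to P$ yields, since $\Sigma$ is smooth and the cover is tame ($\gcd(p+1,p)=1$), a smooth projective surface $S$; let $f\colon S\to C$ be the induced fibration.

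For the invariants I would use the standard facts about tame cyclic covers, namely $\tau_{*}\sO_S=\bigoplus_{j=0}^{p}\bigl(\sO_P(-j)\otimes\pi^{*}\sM^{-j}\bigr)$ and $K_S=\tau^{*}\bigl(K_P+\tfrac{p}{p+1}\Sigma\bigr)$. The first identity gives $\chi(\sO_S)=\sum_{j=0}^{p}\chi\bigl(\sO_P(-j)\otimes\pi^{*}\sM^{-j}\bigr)$, which I would evaluate by the Leray spectral sequence of $\pi\colon P\to C$ together with the projection formula, using $\pi_{*}\sO_P=\sO_C$, $\pi_{*}\sO_P(1)=\sE$, the degrees $\deg\sE=\tfrac{2b-2}{p}$ and $\deg\sM=\tfrac{2b-2}{p+1}$, and Riemann--Roch on $C$; the second gives $K_S^{2}=(p+1)\bigl(K_P+\tfrac{p}{p+1}\Sigma\bigr)^{2}$, which I would expand in the N\'eron--Severi group of the ruled surface $P$ using $\Sigma\equiv(p+1)\xi-(2b-2)F$, $\xi\cdot F=1$, $\xi^{2}=\deg\sE$, $F^{2}=0$, with $\xi=c_1(\sO_P(1))$ and $F$ a fibre. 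These reduce to the values $K_S^{2}=(p+3)(p-2)(2b-2)$ and $\chi(\sO_S)=\tfrac{p^{2}+p-8}{12}(2b-2)$ recorded above; at $p=3$ they become $K_S^{2}=6(2b-2)$ and $\chi(\sO_S)=\tfrac{1}{3}(2b-2)$, an integer since $12\mid 2b-2$, so $K_S^{2}=18\chi(\sO_S)$. Next, the fibre genus: over a closed point $c\in C$, $\tau^{-1}(P_c)\to P_c\cong\mathbb{P}^{1}$ is the degree-$(p+1)$ cyclic cover branched along $\Sigma\cap P_c=q_1+p\,q_2$ with $q_1\ne q_2$, because $\Sigma_1$ is a section of $\pi$ while $\Sigma_2\to C$ is the Frobenius morphism; thus $F$ is a rational curve with a single cusp of semigroup $\langle p,p+1\rangle$, whose $\delta$-invariant is $\tfrac{(p-1)p}{2}$, giving $g=p_a(F)=\tfrac{p^{2}-p}{2}$, which equals $3$ when $p=3$. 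Finally $S$ is of general type, and minimal: one checks $K_P+\tfrac{p}{p+1}\Sigma\equiv(p-2)\xi+(\text{positive})\,F$ is big (positive self-intersection and positive intersection with an ample class) and nef on $P$ for $p\ge 3$, whence $K_S=\tau^{*}(\cdot)$ is nef and big; these positivity statements are among the properties of the construction already listed above.

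It remains to identify $f$ with the Albanese fibration. From $K_S^{2}=18\chi(\sO_S)$ and Noether's formula $12\chi(\sO_S)=K_S^{2}+c_2(S)$ we obtain $c_2(S)=-6\chi(\sO_S)<0$, since $\chi(\sO_S)>0$, so Theorem~\ref{Thm: SB} applies: the Albanese map of $S$ induces a fibration $a\colon S\to B$ onto a smooth curve with $g(B)\ge 2$ and $a_{*}\sO_S=\sO_B$. As $g(C)=b\ge 2$, composing $f$ with the Abel--Jacobi embedding $C\hookrightarrow\mathrm{Jac}(C)$ and using the universal property of the Albanese variety shows that the resulting morphism $S\to\mathrm{Jac}(C)$ factors through the Albanese morphism; hence $f$ is constant on each fibre of $a$, so $f=h\circ a$ for some finite morphism $h\colon B\to C$. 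Then $\sO_C=f_{*}\sO_S=h_{*}a_{*}\sO_S=h_{*}\sO_B$, so $h$ is an isomorphism and $f=a$. Therefore $f$ is the Albanese fibration, and it is a genus-$3$ fibration with $K_S^{2}=18\chi(\sO_S)$ in characteristic $3$, as claimed.

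I expect the principal obstacle to be the characteristic-$p$ bookkeeping in the middle step: carrying the Euler-characteristic computation of $\chi(\sO_S)$ correctly through the decomposition of $\tau_{*}\sO_S$ and the direct images along the ruled surface $P$, and being careful that the relevant fibre is the (non-normal) scheme-theoretic fibre, a cuspidal rational curve of arithmetic genus $\tfrac{p^{2}-p}{2}$ rather than its normalisation. Once Theorem~\ref{Thm: SB} is available, the identification of $f$ with the Albanese fibration is short.
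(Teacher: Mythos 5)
Your proposal is correct and follows essentially the same route as the paper: Proposition~\ref{exm: slope 18} is proved there by exactly the degree-$(p+1)$ cyclic cover of Raynaud's configuration $(C,P,\Sigma)$ specialised to $p=3$, with the same invariant computations giving $K_S^2=(p+3)(p-2)(2b-2)=6(2b-2)$, $\chi(\sO_S)=\tfrac{p^2+p-8}{12}(2b-2)=\tfrac13(2b-2)$ and fibre genus $\tfrac{p^2-p}{2}=3$. The only difference is that the paper asserts minimality, the fibre genus, and that $f$ is ``clearly'' the Albanese fibration without proof, whereas you supply correct verifications of these points (the $\delta$-invariant of the $\langle p,p+1\rangle$ cusp, nefness and bigness of $K_P+\tfrac{p}{p+1}\Sigma$, and the factorisation through the Albanese via Theorem~\ref{Thm: SB}).
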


\subsection{Surfaces of general type with maximal slope}\label{S: example of maximal slope}
Let $S$ be a minimal surface of general type over an algebraically closed field $\sk$ with $\mathrm{char}.(\sk)=p$. We have $K_S^2\le 32 \chi(\sO_S)$ by Theorem~\ref{Thm: M_Y type}. When it comes $K_S^2= 32 \chi(\sO_S)$, we call the surface $S$ is \emph{of maximal slope}.
\subsubsection{Characterization of surfaces with maximal slope}
\begin{prop}\label{prop: criterion for maximal slope}
A general type surface $S$ is of maximal slope if and only if there is a fibration $f: S\to C$ of genus $2$ such that:
\begin{enumerate}
\item $b:=g(C)\ge 2$;

\item all fibres of $f$ is irreducible, singular and rational.
\end{enumerate}
\end{prop}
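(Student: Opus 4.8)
The plan is to prove the two implications separately; in both directions the real work is a fibre-by-fibre analysis of the Albanese fibration (which will turn out to be hyperelliptic), carried out either through the local contributions to $c_2(S)$ or through the branch divisor of the associated hyperelliptic double cover.

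\textbf{The ``only if'' direction.} Suppose $K_S^2=32\chi(\sO_S)$. Since $S$ is of general type, $K_S^2>0$, hence $\chi(\sO_S)>0$ and $K_S^2>18\chi(\sO_S)$, and Noether's formula gives $c_2(S)=12\chi(\sO_S)-K_S^2=-20\chi(\sO_S)<0$. So the Albanese fibration $f\colon S\to C$ exists (Theorem~\ref{Thm: SB}), with $b:=g(C)\ge 2$, and by Corollary~\ref{coro 1} its fibre genus is $g=2$. A genus two fibration is hyperelliptic, so we are in the hyperelliptic case of Theorem~\ref{Thm: M_Y type}, whose bound at $g=2$ is exactly $\tfrac{(12g+8)(g-1)}{g^2-g-1}\chi(\sO_S)=32\chi(\sO_S)$; hence that bound is attained. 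By Remark~\ref{rmk to M_Y type}(1) this forces $c_2(S)=-4(b-1)$, $l(f)=0$ and the Xiao equality $K_{S/C}^2=2\chi_f$. It remains to control all fibres. By Theorem~\ref{Thm: SB} the geometric generic fibre is an integral, singular, rational curve. I would then combine the Grothendieck--Ogg--Shafarevich type expression for $c_2(S)$ with the proof of the inequality $c_2(S)\ge -4(b-1)$ of \cite{Gu} (see (\ref{IEQ: for c_2})) to write $c_2(S)+4(b-1)=\sum_{t\in C}\lambda_t$ with each $\lambda_t\ge 0$ a local invariant of the scheme-theoretic fibre $F_t$, and identify its vanishing locus: $\lambda_t=0$ precisely when $F_t$ is an integral, singular, rational curve with no wild contribution at $t$. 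Then $c_2(S)=-4(b-1)$ forces $\lambda_t=0$ for every $t$, which is the claim (reducible or multiple fibres are excluded since they make $F_t$ non-integral).

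\textbf{The ``if'' direction.} Conversely, let $f\colon S\to C$ be a genus two fibration with $b\ge 2$ all of whose fibres are irreducible, singular and rational. As above $f$ is hyperelliptic; let $\rho$ be the hyperelliptic involution and $\sigma\colon S\to W:=S/\rho$ the quotient. Because the fibres of $f$ are integral, the argument used in the proof of Lemma~\ref{Lem: Characterization of Raynaud's surfaces} shows that $W\to C$ is a minimal ruled surface and $\sigma$ a flat double cover with smooth branch divisor $B\subset W$. By Tate's genus change formula $(p-1)\mid 2g=4$, so $p\in\{2,3,5\}$. The hypothesis that \emph{every} fibre of $f$ is singular forces the non-smooth locus of $f$ to be horizontal, i.e.\ $B$ restricted to each ruling $W_t\cong\mathbb{P}^1$ fails to be reduced (or the cover there degenerates inseparably); as in Raynaud's construction (\S~\ref{S: Raynaud's example}) and the cyclic covers of \S~\ref{S: example of maximal slope} this pins down the divisor class of $B$ in each of the three characteristics. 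A direct computation of $K_S^2$ and $\chi(\sO_S)$ from the standard double cover formulas then yields $K_S^2=32\chi(\sO_S)$. Equivalently, one may verify $l(f)=0$ and $c_2(S)=-4(b-1)$ (all the local terms $\lambda_t$ above vanish, the fibres being integral singular rational) and conclude from Noether's formula together with (\ref{Eq: K_f}), (\ref{Eq: chi_f}) and the genus two Xiao equality.

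\textbf{Where the difficulty lies.} The crux in both directions is the same: determining exactly when the local contribution $\lambda_t$ (equivalently, the local shape over $t$ of the branch divisor of the hyperelliptic quotient) vanishes, and doing so uniformly over $p=2,3,5$. This amounts to making the proof of $c_2(S)\ge -4(b-1)$ in \cite{Gu} effective, and it overlaps substantially with the explicit constructions of \S~\ref{S: Raynaud's example} and \S~\ref{S: example of maximal slope}; I expect the small characteristics, and $p=2$ in particular, to require the most care.
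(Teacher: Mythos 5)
Your ``only if'' direction matches the paper's: Corollary~\ref{coro 1} gives a genus-two Albanese fibration, Remark~\ref{rmk to M_Y type} forces $c_2(S)=-4(b-1)$, $l(f)=0$ and Xiao's equality, and the paper then simply invokes the equality case of \cite[(3.3)]{Gu}, which says $c_2(S)=-4(b-1)$ holds exactly when all fibres are irreducible (singularity and rationality of every fibre then come from Theorem~\ref{Thm: SB} and the inseparability of the non-smooth locus). Your plan of re-deriving a local decomposition $c_2(S)+4(b-1)=\sum_t\lambda_t$ is a reasonable sketch of the same step, left unexecuted but citable.

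The ``if'' direction has a genuine gap. By Remark~\ref{rmk to M_Y type} you need three things: $c_2(S)=-4(b-1)$, $l(f)=0$, and the Xiao \emph{equality} $K_{S/C}^2=2\chi_f$. You obtain the first two, but the third is exactly the crux and you never establish it: your second route simply asserts ``the genus two Xiao equality'' (Theorem~\ref{Thm: Xiao's inequality} only gives the inequality $K_{S/C}^2\ge 2\chi_f$, not equality), and your first route --- determining the divisor class of the branch locus $B$ of $S\to S/\rho$ in each of $p=2,3,5$ and computing $K_S^2$, $\chi(\sO_S)$ from double cover formulas --- is not carried out and is doubtful as stated: in characteristic $2$ the quotient by the hyperelliptic involution is not governed by a branch divisor in the usual $\mu_2$-cover sense, and in characteristic $3$ the analysis of \S~\ref{S: example of maximal slope} shows no such surface exists, so there is no uniform ``pinning down'' of $B$. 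The paper's argument avoids all of this: since every genus-two fibre is integral (no multiple fibres in genus two), the relative canonical map $v\colon S\to \mathbb{P}(f_*\omega_{S/C})$ is a base-point-free morphism of degree $2$, hence $\omega_{S/C}=v^*\sO(1)$ and
\begin{equation*}
K_{S/C}^2=\bigl(v^*\sO(1)\bigr)^2=2\,c_1^2(\sO(1))=2\deg(f_*\omega_{S/C})=2\chi_f,
\end{equation*}
which is the needed equality. You should supply this (or an equivalent) argument; without it the ``if'' direction only yields $K_S^2\le 32\chi(\sO_S)$, not equality.
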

\begin{proof}
If $S$ is of maximal slope, its Albanese fibration $f: S\to C$ is a genus-$2$ fibration by Corollary~\ref{coro 1}. Moreover, from Remark~\ref{rmk to M_Y type}  when $S$ has maximal slope, one must have $c_2(S)=-4(b-1)$ which is equivalent to say that all fibres of $f$ are irreducible (see \cite[(3.3)]{Gu}).

Conversely, if $S$ admits such a fibration $f: S\to C$, then any fibre of $f$ can have only unibranch singularities and therefore $c_2(S)=-4(b-1)$ by Grothendieck-Ogg-Shafarevich formula. On the other hand, since all fibres of $f$ is irreducible and reduced (since genus-$2$ fibres have no multiplicity), we have $l(f)=0$ and the relative canonical map $$v: S\to P=\mathbb{P}(f_*\omega_{S/C})$$ is a morphism without base point.  In particular, we have $$\omega_{S/C}=v^*\sO(1).$$ Therefore $K_{S/C}^2=2 c^2_1(\sO(1))=2\deg (f_*\omega_{S/C})$. It then follows  from Remark~\ref{rmk to M_Y type} again that $K_S^2=32\chi(\sO_S)$.
\end{proof}

By Tate's genus change formula (see  \cite{Tate} or \cite[\S~2.1]{Gu}),  the fibration $f$ in Proposition~\ref{prop: criterion for maximal slope} can only occur possibly in characteristic $p=2,3$ or $5$.
\subsubsection{surfaces of maximal slope when $p=5$}
\begin{prop}
If $p=5$, a surface of general type is of maximal slope if and only if it is one of Raynaud's example.
\end{prop}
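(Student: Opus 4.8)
The plan is to obtain this equivalence by matching the structure theorem for maximal slope surfaces, Proposition~\ref{prop: criterion for maximal slope}, against the characterization of Raynaud's examples, Lemma~\ref{Lem: Characterization of Raynaud's surfaces}. The numerical coincidence that makes $p=5$ special is that the fibre genus $\tfrac{p-1}{2}$ appearing in Lemma~\ref{Lem: Characterization of Raynaud's surfaces} --- and, through $2g-2=p-3$, in Raynaud's examples --- is then exactly $2$, which is the fibre genus supplied by Proposition~\ref{prop: criterion for maximal slope}.

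First I would settle the ``only if'' direction. Given $S$ of maximal slope, Proposition~\ref{prop: criterion for maximal slope} produces a genus-two fibration $f\colon S\to C$ with $b=g(C)\ge 2$ and all fibres irreducible, singular and rational. Since $g=2=\tfrac{p-1}{2}$ when $p=5$, condition (a) of Lemma~\ref{Lem: Characterization of Raynaud's surfaces} is immediate, and it remains to promote ``irreducible'' fibres to ``integral'' ones and to verify hyperellipticity. Reducedness of the fibres is the observation already used inside the proof of Proposition~\ref{prop: criterion for maximal slope}: for a genus-two fibre $m\Gamma$ of a \emph{minimal} surface of general type, adjunction gives $\Gamma\cdot K_S=2/m$ together with $2p_a(\Gamma)-2=\Gamma^2+\Gamma\cdot K_S$, which admits no solution with $m\ge 2$. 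For hyperellipticity I would reuse the relative canonical morphism $v\colon S\to P:=\mathbb{P}(f_*\omega_{S/C})$ from that same proof: it is base-point-free with $\omega_{S/C}=v^*\sO(1)$, the projection $P\to C$ is a $\mathbb{P}^1$-bundle because $\mathrm{rk}\,f_*\omega_{S/C}=g=2$, and the restriction of $v$ to any fibre $F$ is finite of degree $\deg\omega_F=2g-2=2$ onto $\mathbb{P}^1$, hence a flat double cover. With (a) and (b) in hand, Lemma~\ref{Lem: Characterization of Raynaud's surfaces} identifies $S$ with one of Raynaud's examples.

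For the ``if'' direction I would run the same chain in reverse. If $S$ is one of Raynaud's examples in characteristic $5$, its fibration $f\colon S\to C$ has fibre genus $g$ with $2g-2=p-3=2$, so $g=2$; the direct implication of Lemma~\ref{Lem: Characterization of Raynaud's surfaces} makes every geometric fibre integral --- in particular irreducible --- singular and rational, while $b=g(C)\ge 2$ holds by Raynaud's construction. Proposition~\ref{prop: criterion for maximal slope} then gives $K_S^2=32\chi(\sO_S)$, i.e.\ $S$ is of maximal slope.

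I do not anticipate a genuine obstacle: the two delicate inputs --- that genus-two fibres of a minimal surface of general type carry no multiplicity, and that the relative canonical map exhibits them as double covers of a ruled surface --- are already established in the excerpt. The point needing care is purely a bookkeeping matter: Proposition~\ref{prop: criterion for maximal slope} only literally asserts that the fibres are irreducible, singular and rational, whereas Lemma~\ref{Lem: Characterization of Raynaud's surfaces} requires them to be ``hyperelliptic'' and ``integral'', so one must make sure both strengthenings are genuinely available before invoking the lemma.
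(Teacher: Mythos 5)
Your proposal is correct and follows exactly the paper's route: the paper's own proof is the one-line citation ``The proof is due to Lemma~\ref{Lem: Characterization of Raynaud's surfaces} and Proposition~\ref{prop: criterion for maximal slope},'' and you have simply spelled out the bookkeeping (genus $\tfrac{p-1}{2}=2$, reducedness of genus-two fibres, hyperellipticity via the relative canonical double cover) that makes the two statements match. No discrepancy with the paper's argument.
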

\begin{proof} The proof is due to Lemma \ref{Lem: Characterization of Raynaud's surfaces} and Proposition \ref{prop: criterion for maximal slope}.
\end{proof}

\subsubsection{surfaces of maximal slope when $p=2$}
We give another example of surface with maximal slope when $p=2$ as follows. Define $C$ to be the quintic plane curve given by homogeneous equation:
\begin{equation}
\label{equ: for C} Y^4Z+YZ^4=X^5
\end{equation}
over an algebraically closed field $\mathbf{k}$ of characteristic $p=2$. One can easily check that $C$ is a smooth curve of genus $b:=g(C)=6$. There are two affine subset $C_i$ ($i=0,1$) of $C$ as below:
\begin{itemize}
\item[$C_0$] $(Z=1): y^4+y=x^5, x=\dfrac{X}{Z}, y=\dfrac{Y}{Z}$, with $C\backslash C_0=\{(0,1,0)\}$;

\item[$C_1$] $(Y=1): z'^4+z'=x'^5, x'=\dfrac{X}{Y}=\dfrac{x}{y}, z'=\dfrac{Z}{Y}=\dfrac{1}{y}$,with $C\backslash C_1=\{(0,0,1)\}$;
\end{itemize}
\noindent For simplicity, we introduce the following notations:
\begin{itemize}
\item $\infty$ is the point $(0,1,0)$ which is the complement of $C_0$ in $C$;

\item $\Lambda:=\{(0,1,\lambda)|\lambda\in \mathbb{F}_{16}^*\}=\{(0,\tau,1)|\tau\in \mathbb{F}_{16}^*\}\subsetneq C$.

\item $C_1':=C_1\backslash \Lambda$;

\item $C_{10}=C_0\cap C_1'$.
\end{itemize}
%
%

\begin{itemize}
\item[Over $C_0$:] $S$ is defined as
\begin{equation}
Y_0^2=S_0T_0^5+xS_0^6;
\end{equation}
in the weighted projective space $\mathrm{Proj}(\sO_{C_0}[S_0^1,T_0^1,Y_0^3])$. Here the superscript on each element is its homogeneous degree.

\item[Over $C_1'$:] $S$ is defined as
\begin{equation}
Y_1^2=S_1T_1^5+\dfrac{x'}{1+z'^6}S_1^6;
\end{equation}
\end{itemize}
in the weighted projective space $\mathrm{Proj}(\sO_{C_1'}[S_1^1,T_1^1,Y_1^3])$.

The homegeneous translation relation is given by
$$
\left\{\begin{array}{cl}
S_1&=x'^3S_0 \\
T_1&=x'T_0 \\
Y_1&=x'^4Y_0+(1+z'^3)T_0^3
\end{array}
\right.
$$
and this construction makes sense because that $$x'^8(Y_0^2-S_0T_0^5+xS_0^6)=Y_1^2-(S_1T_1^5+\dfrac{x'}{1+z'^6}S_1^6)$$ and $x'$ is invertible on $C_{01}$.

One can check  easily that $S$ is a non-singular surface and the fibration $f:S\to C$ is as in Proposition~\ref{prop: criterion for maximal slope}. So it gives an example of surface with maximal slope in characteristic $2$. In this example, we actually have $\chi(\sO_S)=1$ and $K_S^2=32$. We would also like to mention that $S$ is given from $C\times \mathbb{P}^1$ by taking quotient of the foliation $D=s^6\dfrac{\partial}{\partial s}+\dfrac{\partial}{\partial x},$ where $s$ is the parameter of $\mathbb{P}^1$.

\subsubsection{No surfaces of maximal slope when $p=3$}
Finally we conclude that there is no surfaces of general type with maximal slope when $p=3$. Suppose we have such a surface $S$. Note that the relative canonical map gives a morphism: $\pi: S\to \mathbb{P}(f_*\omega_{S/C})$ since each fibre of $f$ is irreducible and reduced, and such morphism $\pi$ is necessarily a   flat double cover (see \cite[\S~2]{Gu}).
Let  $M\subsetneq \mathbb{P}(f_*\omega_{S/C})$ be the branch divisor of $\pi$, which satisfies:
\begin{itemize}
\item $M$ is a smooth, horizontal divisor and $[M:C]=6$;

\item each component of $M$ is inseparable over $C$;

\item for each point $c\in C$, its inverse image in $M$ has exactly two points as a set. In fact, otherwise there is some $c$ has one inverse image. Then the fibre of $f$ at $c$ by construction is a flat double cover of $\mathbb{P}^1_\mathbf{k}$ branching at a single point of multiplicity $6$. Such fibre is clearly not irreducible.
\end{itemize}
Then there are two possibilities:
\begin{enumerate}
\item [A)]  $M=M_1+M_2$ with $M_1\cdot M_2=0$, and the projections $M_i\to C$ ($i=1,2$) are both isomorphic to the Frobenius morphism;

\item [B)] $M$ is irreducible and the projection $u: M\to C$ factors as
$$
\xymatrix{
M\ar[rr]^{F_M}\ar[rd]^u&& M'\ar[ld]^v\\
&C&}
$$where $F_M$ is the frobenius morphism and $v$ is an \'etale double cover.
 \end{enumerate}
Indeed we only need to consider the case A), since replacing $C$ by the base change $v$ above which is an \'etale double cover, the case B) can be turned into A).

Finally we go to exclude case A). Let $\Sigma$ be the divisor class $\sO(1)$ of $\mathbb{P}(f_*\omega_{S/C})$, and $M_i\sim_{\mathrm{num}}3\Sigma+u_iF$ for $i=1,2$. Recall that $$\Sigma^2=\deg f_*\omega_{S/C}=\chi_f,$$ and we have
\begin{align}
2b-2&=(3\Sigma+u_iF)^2+(3\Sigma+u_iF)(-2\Sigma+(\chi_f+2b-2) F)\\
0&=(3\Sigma+u_1F)(3\Sigma+u_2F)
\end{align}
Thus $u_1=u_2$ and $b=1$, which is a contradiction.

\bigskip

\begin{center}
{\bf Acknowledgement}
\end{center}
The first named author would like to thank L. Zhang and T. Zhang for some helpful communications. We would like to thank
Christian Liedtke who suggested the application of $\chi(\sO_S)>0$ in classification of surfaces (Theorem 1.6) in an email to the second named author.

\bigskip

\noindent\address{Yi Gu: School of Mathematical Sciences, Soochow University, Jiangsu
215006, P. R. of China.
}\\
Email:  sudaguyi2017@suda.edu.cn\\
\noindent\address{Xiaotao Sun: Center of Applied Mathematics, School of Mathematics, Tianjin University, No.92 Weijin Road, Tianjin 300072, P. R. China}\\
Email: xiaotaosun@tju.edu.cn \\
\address{Mingshuo Zhou: Center of Applied Mathematics, School of Mathematics, Tianjin University, No.92 Weijin Road, Tianjin 300072, P. R. China}\\
Email: zhoumingshuo@amss.ac.cn
\end{document}